\documentclass[hidelinks,onefignum,onetabnum]{siamart250211}

\usepackage{lipsum}
\usepackage{graphicx}
\usepackage{epstopdf}
\ifpdf
  \DeclareGraphicsExtensions{.eps,.pdf,.png,.jpg}
\else
  \DeclareGraphicsExtensions{.eps}
\fi

\newsiamremark{remark}{Remark}
\newsiamremark{hypothesis}{Hypothesis}
\crefname{hypothesis}{Hypothesis}{Hypotheses}
\newsiamthm{claim}{Claim}
\newsiamremark{fact}{Fact}
\crefname{fact}{Fact}{Facts}
\newsiamthm{assumption}{Assumption}

\headers{ Accelerated Quasar-Convex Optimization}{E. Gbaguidi and J. Hermant}

\title{Accelerated Stochastic Zeroth-Order Quasar-Convex Optimization \thanks{Submitted to the editors DATE.
\funding{This project has benefited from state support managed by the Agence Nationale de la Recherche (French National Research Agency) under the reference ANR-20-SFRI-0001.}}}

\author{Eméric Gbaguidi\thanks{Equal contributions. Institut de Mathématiques de Bordeaux, Université de Bordeaux
  (\email{thierry-emeric.gbaguidi@math.u-bordeaux.fr}\thanks{Equal contributions},\email{julien.hermant@math.u-bordeaux.fr}).}
\and Julien Hermant\footnotemark[2]}

\usepackage{amsopn}

\usepackage{amsmath,amsfonts,amssymb,mathtools}
\usepackage{mathrsfs} 

\usepackage{enumitem}   
\usepackage{algorithmic}
\usepackage{algorithm}

\usepackage{xcolor,dsfont,todonotes,ulem}
\usepackage{thmtools}
\usepackage{thm-restate}

\newcommand{\EG}[1]{{\color{blue} #1}}

\def\xiest{{\xi^{\{m\}}}}
\def\xiestalg{{\xi_k^{\{m\}}}}

\def\vest{{v}}
\def\vestalg{{v_k}}
\newcommand{\vestf}[1]{v}
\def\gest{{g_\alpha}}
\newcommand{\ceil}[1]{\left\lceil {#1} \right\rceil}
\def\R{{\mathbb{R}}}

\def\N{{\mathbb{N}}}

\def\sphere{\mathbb{S}}
\def\ball{\mathbb{B}}
\def\1{\mathbb{1}}

\def\xz{(x_t,z_t)_{t\ge 0}}

\def\mart{(M_t)_{t\ge 0}}
\def\tk{\{ T_k \}_{k\in \N}}
\newcommand{\I}{\mathbf{I}}
\newcommand{\proxf}{h}
\newcommand{\xalg}[1]{\tilde{x}_{#1}}
\newcommand{\yalg}[1]{\tilde{y}_{#1}}
\newcommand{\zalg}[1]{\tilde{z}_{#1}}
\newcommand{\xalgt}[1]{\tilde{x}_{#1}}

\renewcommand{\P}{\mathbb{P}}
\newcommand{\bpar}[1]{\left(#1\right)}
\newcommand{\prox}[2]{\text{Prox}_{#2}\left(#1\right)}

\newcommand{\E}[1]{\mathbb{E}\left[#1 \right]}

\newcommand{\normgen}[1]{\left\lVert#1\right\rVert}
\newcommand{\norm}[1]{\left\lVert#1\right\rVert_{2}}
\newcommand{\normp}[1]{\left\lVert#1\right\rVert_{p}}
\newcommand{\normq}[1]{\left\lVert#1\right\rVert_{q}}
\newcommand{\Ec}[2]{\mathbb{E}_{#2}\left[#1 \right]}
\newcommand{\Ecbig}[2]{\mathbb{E}_{#2}\Bigg[#1 \Bigg]}
\newcommand{\dotprod}[1]{\left< #1\right>}

\newcommand{\off}[1]{}

\newcommand{\bigO}{\mathcal{O}}

\def\argmin{\textup{argmin}\,}

\ifpdf
\hypersetup{
  pdftitle={ Accelerated Quasar-Convex Optimization},
  pdfauthor={E. Gbaguidi, and J. Hermant}
}
\fi

\begin{document}

\maketitle

\begin{abstract}
We consider unconstrained minimization of smooth quasar-convex functions when only noisy function evaluations are accessible through a stochastic zeroth-order oracle.
For these non-convex functions, the standard acceleration method relies on subspace-search mechanisms that require first-order information, being therefore unavailable in zeroth-order regimes. In contrast, the alternative and less conventional continuized method enable to avoid such mechanisms. In this work, we design a zeroth-order continuized algorithm, leading to accelerated convergence guarantees that parallel those of smooth convex optimization up to a quasar-convexity parameter. 
Our method incorporates a mirror step, improving the dimension dependence when there exists sparse solution.
\end{abstract}

\begin{keywords}
Non-convex optimization, zeroth-order optimization, stochastic optimization
\end{keywords}

\begin{MSCcodes}
90C15, 90C26, 90C56
\end{MSCcodes}

\section{Introduction}
 We consider unconstrained minimization of a smooth function $f:\R^d\to\R$ that writes as follows
\begin{equation}\label{prob:intro}
    \min_{x \in \R^d}~ f(x), \quad \text{where for all }  x\in \R^d,~  f(x) :=  \Ec{f(x,\xi)}{\xi}, 
\end{equation}
where $\xi$ is a random variable belonging to a space $\Xi$, following a distribution $\mathcal{P}_\xi$. A typical example encompassed by \eqref{prob:intro} is empirical risk minimization, where $\mathcal{P}_\xi$ is some data distribution.
Assuming that $\inf_{x\in\R^d} f(x)>-\infty$, and for a prescribed accuracy
$\varepsilon>0$, our goal is to compute an $\varepsilon$-optimal solution
$\tilde x$ that satisfies
\begin{equation}\label{eq:prob}
    f(\tilde x)-\inf_x f(x) \le \varepsilon.
\end{equation}
A wide range of optimization methods have been proposed to address this problem, differing in the type of information they require.
In this work, we focus on zeroth-order methods, which access stochastic evaluations of the function. If fast differentiation techniques have made that in many situations the computation of the full gradient is comparable to that of evaluating
the function itself \cite{griewank2008evaluating}, there remain important settings in which gradient evaluations are either prohibitively expensive or simply unavailable. Examples include bandit optimization \cite{agarwal2010optimal,bartlett2008high,flaxman2004online}, reinforcement learning \cite{choromanski2018structured} or more recently the fine-tuning of large language models \cite{gautam2024variance,malladi2023fine}.
In this work, we are specifically interested in solving \eqref{eq:prob} with fewer function evaluations as possible.

\noindent
\textbf{Acceleration with momentum for convex functions \:}  
Two evaluations of the function can be used to build finite-difference approximations of the gradient, for example
\begin{equation}\label{eq:intro:estimator}
    \frac{f(x+\alpha v)-f(x)}{\alpha}v,
\end{equation}
for some random direction vector $v \in \R^d$ and smoothing parameter $\alpha > 0$. With this perspective, it is natural to wonder if mechanisms that accelerate optimization when using first-order algorithms, i.e. algorithms that access the gradient, can also accelerate when using such approximations. In the case of smooth and convex deterministic first-order optimization, it is well established that using Nesterov's momentum not only improves over gradient descent convergence speed \cite{nesterov1983method,nesterov2004introductory}, but is also worst case optimal \cite{nemirovskij1983problem}. Precisely, it achieves an $\varepsilon$-optimal solution \eqref{eq:prob} in $\bigO(1/\sqrt{\varepsilon})$ iterations, compared to the $\bigO(1/\varepsilon)$ rate of standard gradient descent. Replacing the gradient by \eqref{eq:intro:estimator}, as long as $\alpha$ is chosen small enough and with $v$ chosen at random following a suitable distribution, one can recover similar complexity rate up to the dimension $d$ that appears in the factor, namely $\bigO(d/\sqrt{\varepsilon})$ for zeroth-order Nesterov momentum and $\bigO(d/\varepsilon)$ for zeroth-order gradient descent \cite{gorbunov2018accelerated,nesterov2017random}.
But if the convexity assumption is  convenient for theoretical analysis, there is a large body of problems that do not fit in \cite{bhojanapalli2016global,dauphin2014identifying,ge2017no}. A valuable question is whether and to what extent momentum methods can be effective beyond the convex
setting.

\noindent
\textbf{Acceleration for non-convex functions \:}
In the case of first-order optimization, there exist settings in which momentum cannot improve upon the convergence rate
of gradient descent.
Notable examples include general $L$-smooth functions \cite{lowerboundI} and
$L$-smooth functions satisfying the Polyak--\L{}ojasiewicz condition
\cite{PLlowerbound}. Zeroth-order optimization has been studied in these nonconvex settings, see \textit{e.g.} \cite{Chen2019ZOAdaMMZA,farzin2024minimisation,ghadimi2013stochastic,huang2022accelerated,Ji2019ImprovedZV,Liu2019signSGDVZ,Shi2024GradientFreeMF,Tang2019DistributedZA}, but if momentum can help on specific aspects--such as when using variance reduction \cite{huang2022accelerated}--we do not expect acceleration using momentum, as it cannot occur even with full gradient information.
In contrast, theoretical acceleration for gradient-based algorithms using momentum is known to be achievable for more
structured non-convex classes, such as \textit{quasar-convex} functions \cite{gower2021sgd,hardt2018gradient,hinder2020near,lara2025delayed,pun2024online}. 
\begin{assumption}\label{ass:quasar_conv}
$f$ is quasar-convex, \textit{i.e.}, there exists $\tau\in(0,1]$ such that for a minimizer $x^\ast \in \argmin f$ and any
$x\in\R^d$
\[f(x) + \frac{1}{\tau}\langle\nabla f(x), x^\ast - x\rangle \le f(x^\ast).\]
\end{assumption}
This assumption appears to model interesting non-convex practical problems, such as learning linear dynamical systems
\cite{hardt2018gradient} and training certain generalized linear models
\cite{wang2023continuizedaccelerationquasarconvex}. From a more empirical perspective, when using stochastic gradient descent, some deep-learning models appear to satisfy $1$-quasar convexity along the optimization trajectory
\cite{sgdQuasConvNeur}, this special case $\tau = 1$ being also known as star-convexity \cite{lee2016optimizingstarconvex}. An important property of quasar-convex functions is that critical points are global minimizers,
ruling out spurious local minima and
saddle points. Remarkably, some acceleration results from the convex setting extend to quasar-convex functions, up to some stabilizing mechanisms.
Precisely, when combined with subspace-search procedures, Nesterov's momentum
methods achieve the accelerated $\bigO(1/\sqrt{\varepsilon})$ rate
\cite{hinder2020near,nesterov2021primal}, matching the bound of the convex case up to a logarithmic factor. The line-search procedure designed in \cite{hinder2020near} has become standard in order to derive momentum-based acceleration under quasar-convexity; see extensions in the case of using stochastic gradients
\cite{fu2023accelerated}, non Euclidian optimization \cite{lezane2024accelerated} and constrained optimization \cite{martinez2025smooth}.

\noindent
\textbf{The challenge of zeroth-order  acceleration with quasar-convex functions \:}
 To our knowledge, quasar-convex optimization has mostly been focused on first-order methods. In contrast, zeroth-order quasar-convex optimization remains unaddressed, with the notable exception of \cite{farzin2025minimisation}. This work shows that a gradient descent-type algorithm with zeroth-order information retains the bound $\bigO(d/\varepsilon)$ of the convex case. However, it is currently unknown whether incorporating Nesterov's momentum can yield a $\bigO(d/\sqrt{\varepsilon})$ bound in this setting.
Importantly, the subspace-search procedures that allow acceleration in quasar-convex optimization crucially relies on gradient information. In Nesterov et al.\cite{nesterov2021primal}, the search procedure is not explicit, but it is assumed to find a parameter that ensures the non-negativity of a quantity involving the gradient of $f$, while the explicit search procedure from \cite{hinder2020near} is designed to satisfy a bound that involves the gradient. It is therefore unclear whether this subspace-search technique, and thus the possibility of acceleration, extends to the zeroth-order setting.

\noindent
\textbf{The continuized Nesterov momentum as an alternative \:}
We consider a less standard alternative gradient-based method, named \textit{continuized Nesterov method}, which has recently been proposed in \cite{even2021continuized}. 
It is based on a process that combines continuous-time momentum dynamics with gradient steps triggered at random times $\tk$. It writes for some sequences of positive parameters $(\gamma_t)$, $(\gamma'_t)$, $(\eta_t)$ and $(\eta'_t)$, as the following Poisson-driven stochastic differential equation 
\begin{equation}\label{eq:cont_origin}
      \left\{\begin{array}{ll}
        \text{d}x_t &= \eta_t(z_t-x_t)\text{d}t - \gamma_t \int_{\Xi}\nabla f(x_{t},\xi) \text{d}N(t,\xi),\vspace{0.1cm} \\
        \text{d}z_t &= \eta'_t(x_t-z_t)\text{d}t - \gamma'_t \int_{\Xi}\nabla f(x_{t},\xi) \text{d}N(t,\xi),
    \end{array}\right.
\end{equation}
where $\text{d}N(t,\xi)= \sum_{k\ge 0} \delta_{(T_k,\xi_k)}(\text{d}t,\text{d}\xi)$ is a Poisson point measure with intensity $\text{d}t \otimes \text{d}\mathcal{P}_\xi$.
The fundamental property of this process is that although it can be analyzed through continuous-time Lyapunov approaches using tools from stochastic calculus theory, it can be evaluated with a computable algorithm that writes as a Nesterov momentum algorithm.
Of special interest for our motivation is that in the case of $L$-smooth quasar convex functions, this system allows to design an algorithm that achieves in expectation the complexity rate $\bigO(1/\sqrt{\varepsilon})$ \textbf{without requiring subspace-search procedures }\cite{wang2023continuizedaccelerationquasarconvex}. This offers a promising alternative to realize our objective. Nonetheless, the literature on this method remains very sparse and, to our knowledge, the only existing works  \cite{even2021continuized,hermant2025continuized,hermant2026continuized,wang2023continuizedaccelerationquasarconvex} focus on first-order optimization. 


\vspace{0.5cm}
\noindent
\textbf{Contributions \:} Our key contribution is to adapt the dynamics \eqref{eq:cont_origin} to design a stochastic zeroth-order continuized Nesterov process, detailed in Section \ref{sec:method}.
In Section \ref{sec:main_results}, we show that when applied to smooth quasar-convex functions, the associated algorithm recovers the complexity from the smooth convex case up to a $\tau$ factor. In particular, if we assume access to stochastic evaluations of functions, we solve \eqref{prob:intro} in at most $\bigO (d/\sqrt{\varepsilon})$ functions evaluations.
Our method is designed such that our final algorithm couples a gradient step with a mirror step.
This allows a better dependence on the dimension in the 1-norm prox setup, particularly when there exists a sparse minimizer or a different scale between coordinates of a solution, as observed in \cite{gorbunov2018accelerated}. 

\section{Zeroth-order stochastic continuized Nesterov}


In our work, we denote $\min_{x\in \R^d} f(x) :=f^\ast$, $x^\ast \in \argmin_{x \in \R^d} f(x)$. For a measurable set $I$, $\mathcal{U}(I)$ denotes the uniform distribution on $I$. We note $\normp{\cdot}$ the usual $\ell_p$ norm for $p\in[1,+\infty]$,
     $\mathbb{B}=\{x\in \R^d \; \lvert \; \norm{x}\leq 1\}$ and $\mathbb{S}=\{x\in \R^d \; \lvert \; \norm{x} = 1\}$.
    For a random variable $X$, a distribution $\mathcal{P}$ of a random variable $\zeta$ and any measurable function $\psi$, we denote $\mathbb{E}_{\zeta}[\psi(X,\zeta)] := \int \psi(X,\zeta)\mathcal{P}(\text{d}\zeta)$, which is the conditional expectation of $\psi(X,\zeta)$ with respect to $X$. For a distribution $\mathcal{P}$, $\mathcal{P}^{\otimes n}$ is the tensor product of $\mathcal{P}$ with itself $n$ times. We define $V$ as a Bregman divergence, which writes for all $x,y\in \R^d$
\begin{equation}\label{eq:def_bregman_div}
    V(x,y) = \proxf(y)-\proxf(x) -\dotprod{\nabla \proxf(x),y-x},
\end{equation}
with a prox-function $\proxf$ assumed to be continuous, differentiable and $1$-strongly convex with respect to $\normp{\cdot}$.

\subsection{Our Method}\label{sec:method}
We design a stochastic continuized zeroth-order Nesterov momentum algorithm, inspired from the first-order case \cite{even2021continuized}. An intuitive way to present the process is as follows: consider a sequence of random times $\tk$ such that $T_0 = 0$ and $T_{k+1}-T_k$ are i.i.d. random variables with exponential distribution $\mathcal{E}(1)$ for all $k \in \N$. For some positive parameters $\eta_t, \gamma_t, \gamma'_t$, we define a continuous-time process $\xz$ on each interval $(T_k,T_{k+1})$ as the solution of the following equation
\begin{align*}\left\{
    \begin{array}{ll}
        \text{d}{x}_t &= \eta_t(z_t-x_t)\text{d}t, \\
        \text{d}{z}_t &= 0.
    \end{array}
\right.
\end{align*}
At $t=T_k$, the process jumps by performing the following steps,
\begin{align*}
    x_{T_k} &= x_{T_k^{-}} - \gamma_{T_k} \gest\Big(x_{T_k^-},v_k,\xiestalg\Big), \vspace{0.1cm}\\ 
    z_{T_k} &= \underset{z\in \R^d}{\argmin} \left\{\gamma'_{T_k}\dotprod{\gest\Big(x_{T_k^{-}},v_k,\xiestalg\Big),z}+ V(z_{T_k^{-}},z) \right\},
\end{align*}
where $v_k\sim  \mathcal{U}(\sphere)$ and $\xi_k^{\{m\}}$ denotes a collection of $m$ independent random vectors $\xi_k^i$, $1\leq i\leq m$ following the same distribution $\mathcal{P}_\xi$. We assume that $\xi_k^{\{m\}}$ and $v_k$ are mutually independent, and $\gest(\cdot)$ is defined as a mini-batch stochastic finite-difference approximation of $\nabla f(x)$ given by
\begin{equation}\label{eq:oracle_g_v_xi}
    \gest\left(x,v,\xi^{\{m\}}\right) := \frac{1}{m}\sum_{i=1}^m \frac{d}{\alpha}\Big(f(x+\alpha v,\xi_i)-f(x,\xi_i)\Big)v,
\end{equation} 
for some smoothing parameter $\alpha> 0$ and batch size $m \in \N^\ast$.
This process can be written in a more compact form as the following stochastic differential equation 
\begin{align}\label{eq:nest_continuizedv2}\tag{ZO-CNM}
\left\{
    \begin{array}{ll}
        \text{d}x_t &= \eta_t(z_t-x_t)\text{d}t - \gamma_t\displaystyle\int_{\sphere\times \Xi^m} \gest\Big(x_{t^-},\vest,\xiest\Big) \text{d}N\left(t,\vest,\xiest\right), \\
        \text{d}z_t &= \displaystyle\int_{\sphere\times \Xi^m} \left[\prox{\gamma'_t\gest\Big(x_{t^-},\vest,\xiest\Big)}{z_{t^-}}-z_{t^-}\right] \text{d}N\left(t,\vest,\xiest\right),
    \end{array}
\right.
\end{align}
with for all vectors $a,b\in \R^d$,
\begin{equation}\label{eq:frame_def_prox_operator}
    \prox{a}{b} = \underset{z\in \R^d}{\argmin} \{ \dotprod{a,z} + V(b,z)\},
\end{equation}
and where $\text{d}N(t,v,\xiest) = \sum_{k\ge 0} \delta_{\bpar{T_k,v_k,\xiestalg}}(\text{d}t,\text{d}v,\text{d}\xiest)$ is a Poisson point measure defined on $\R_{\geq 0}\times\sphere\times \Xi^m$ with intensity $\text{d}t \otimes \text{d}\mathcal{U}(\sphere) \otimes  \text{d}\mathcal{P}_{\xi}^{\otimes m}$. It combines the continuous component through the term $\text{d}t$ and the steps that act at discrete random times through the term $\text{d}N_t$. Our method \eqref{eq:nest_continuizedv2} shares the following fundamental property with the original first-order version: by defining the sequences $\tilde{y}_k := x_{T_{k+1}^-}$, $\tilde{x}_k := x_{T_{k}}$ and $\tilde{z}_k := z_{T_{k}}$, with $k \in \N$, these satisfy a recursive relation that takes the form of a Nesterov momentum algorithm, with stochastic parameters depending on the random times $\tk$. 
\begin{proposition}\label{prop:disc}
Let $\xz$ follow \eqref{eq:nest_continuizedv2} with underlying jump times $\tk$, where we fix $\eta_t=\frac{ \eta}{t}$ for some positive constant $\eta>0$. 
Define $\tilde{\gamma}_{k} :=\gamma_{T_{k+1}^{-}}$, $\tilde{\gamma}'_{k} := \gamma'_{T_{k+1}^{-}}$, $\tilde{y}_k := x_{T_{k+1}^-}$, $\tilde{x}_{k+1} := x_{T_{k+1}}$ and $\tilde{z}_{k+1} := z_{T_{k+1}}$ as evaluations of this process. Then, $(\tilde{y}_k,\tilde{x}_k, \tilde{z}_k)$ writes as a Zeroth-Order NEsterov Momentum (ZO-NEM) algorithm of the form
%
\begin{align*}\label{alg:cont}\tag{ZO-NEM}\left\{
    \begin{array}{ll}
       \yalg{k} &= \bpar{\frac{T_k}{T_{k+1}}}^{\eta} \xalg{k} + \Big(1- \bpar{\frac{T_k}{T_{k+1}}}^{\eta}\Big)\zalg{k}, \\
        \xalg{k+1} &= \yalg{k} - \tilde{\gamma}_k\gest\Big(\yalg{k},\vestalg,\xiestalg\Big), \\
        \zalg{k+1} &= \prox{\tilde{\gamma}'_{k}\gest\Big(\yalg{k},\vestalg,\xiestalg\Big)}{\zalg{k}}.
    \end{array}
\right.
\end{align*}

\end{proposition}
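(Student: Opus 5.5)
The plan is to integrate the piecewise-deterministic dynamics of \eqref{eq:nest_continuizedv2} between consecutive jump times, and then read off the jump rules at each $T_{k+1}$. Since the Poisson point measure $\text{d}N$ charges only the atoms $(T_k,v_k,\xiestalg)$, on each open interval $(T_k,T_{k+1})$ the jump integrals vanish. There the system reduces to the ODE
\[
\frac{\text{d}x_t}{\text{d}t} = \frac{\eta}{t}(z_t-x_t), \qquad \frac{\text{d}z_t}{\text{d}t}=0 .
\]
The process is càdlàg, so on this interval $z_t \equiv z_{T_k} = \zalg{k}$ and the initial condition is $x_{T_k}=\xalg{k}$.

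\textbf{Step 1: the linear ODE.} With $z$ frozen, the difference $w_t := x_t - \zalg{k}$ satisfies $\dot w_t = -\frac{\eta}{t} w_t$. Hence
\[
w_t = \Big(\frac{T_k}{t}\Big)^{\eta} w_{T_k}, \qquad t\in[T_k,T_{k+1}).
\]
Letting $t\uparrow T_{k+1}$ gives
\[
\yalg{k} = x_{T_{k+1}^-} = \Big(\frac{T_k}{T_{k+1}}\Big)^{\eta}\xalg{k} + \Big(1-\Big(\frac{T_k}{T_{k+1}}\Big)^{\eta}\Big)\zalg{k},
\]
which is the first line of \eqref{alg:cont}.

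\textbf{Step 2: the jumps.} At $t=T_{k+1}$ the measure has a unit atom at $(T_{k+1},v_{k+1},\xi^{\{m\}}_{k+1})$. Integrating $\text{d}x_t$ over $\{T_{k+1}\}$, the $\text{d}t$ part contributes nothing and the jump part gives
\[
x_{T_{k+1}} = x_{T_{k+1}^-} - \gamma_{T_{k+1}^-}\, \gest\bpar{x_{T_{k+1}^-},v,\xi^{\{m\}}}.
\]
The coefficient is evaluated at the predictable (left-limit) version, which gives $\tilde\gamma_k$. Similarly, $z_{T_{k+1}} = \prox{\tilde\gamma'_k \gest(\yalg{k},\cdot)}{z_{T_{k+1}^-}}$, and $z_{T_{k+1}^-}=\zalg{k}$ by Step 1. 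The only bookkeeping issue is indexing: the random direction and batch used at the $(k+1)$-th jump are the ones the statement labels $(\vestalg,\xiestalg)$. Since these are i.i.d. and independent of the past, this relabeling does not change the algorithm's law.

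\textbf{Main obstacle.} The delicate point is the case $k=0$, where $T_0=0$ and $\eta_t=\eta/t$ is singular at $t=0$. On $(0,T_1)$ the solution is $w_t = (s/t)^\eta w_s$ for every $s\in(0,t)$. Letting $s\to 0$ forces $x_t = \zalg{0}$ on $(0,T_1)$, whatever the initial $x_0$. This is consistent with the formula, because $(T_0/T_1)^\eta = 0$. I would handle this case by solving on $[s,T_1)$ and passing to the limit. I would also note that existence and uniqueness of the piecewise solution follows by induction over the almost surely increasing, unbounded sequence $\tk$.
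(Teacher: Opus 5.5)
Your proposal is correct and follows the paper's proof: you integrate the linear ODE $\text{d}x_t=\frac{\eta}{t}(z_t-x_t)\text{d}t$ with $z$ frozen on $(T_k,T_{k+1})$, let $t\uparrow T_{k+1}$ to obtain $\yalg{k}$, and then read off the gradient and prox jumps at $T_{k+1}$. Your remarks on the $v_k$ versus $v_{k+1}$ labelling and on the singularity of $\eta/t$ at $T_0=0$ go beyond the paper, which applies the general formula at $k=0$ without comment; note only that your limit $s\to 0$ requires $x_s$ to stay bounded near $0$, so when $x_0\neq z_0$ it is really a convention (selecting the bounded solution $x\equiv z_0$ on $(0,T_1)$) rather than a derivation.
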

Proposition \ref{prop:disc} generalizes \cite{even2021continuized, wang2023continuizedaccelerationquasarconvex}, using an arbitrary constant $\eta> 0$, a zeroth-order stochastic estimation of the gradient of the function $f$ and a mirror step, see the proof in Appendix \ref{app:disc_proof}.
To summarize the continuized approach, we will further analyze the continuous-time process \eqref{eq:nest_continuizedv2} using an Itô formula. Then, because these dynamics can be evaluated by the algorithm \eqref{alg:cont}, we will be able to transfer the results obtained for our continuous-time analysis to the algorithm. 
\begin{remark}[On the Prox update]\label{rem:prox}
    In the particular case where $\proxf(x) = \frac{1}{2}\norm{x}^2$,
    $\prox{\tilde{\gamma}'_{k}\gest\Big(\yalg{k},\vestalg,\xiestalg\Big)}{\zalg{k}}$ reduces to a gradient step $\zalg{k}- \tilde \gamma'_k \gest\Big(\yalg{k},\vestalg,\xiestalg\Big)$.
Another choice is $\proxf(x) = \frac{e^1 d^{(\kappa-1)(2-\kappa)/\kappa}\log(d)}{2}\normgen{x}_{\kappa}^2$ with $\kappa=1+\frac{1}{\log(d)}$, which makes this prox function $1$-strongly convex with respect to $\normgen{\cdot}_1$, see \cite{ben2001lectures}. Moreover, Gorbunov et al. \cite{gorbunov2018accelerated} observed in the case of zeroth-order convex optimization that this choice can improve the dependence on the dimension in the case of a sparse vector $x_0-x^\ast$, which can happen in some setting such as compressed sensing \cite{candes2006stable,donoho2006compressed}.
The following lemma illustrates how different choices of norm modify the variance of a random vector drawn uniformly on the unit sphere, which will be used in our proof.
\begin{lemma}[Gorbunov et al.\cite{gorbunov2018accelerated}]\label{lem:bound_qnorm}
    Let $v$ be a random vector uniformly distributed on the sphere $\sphere$, $p \in [1,2]$ and $q$ such that $\frac{1}{p}+\frac{1}{q} = 1$. Let $\kappa_d :=  \min\{q-1,16\log(d)-8 \}d^{\frac{2}{q}-1}$. Then, for $d \geq 8$, we have
    \begin{enumerate}[label=(\roman*)]
        \item  $\Ec{\dotprod{v,s}^2\normq{v}^2}{v} \leq \frac{6\kappa_d}{d}\norm{s}^2 \quad $ for all $s \in \R^d$,
        \item $\Ec{\normq{v}^2}{v} \leq \kappa_d$.
    \end{enumerate}
\end{lemma}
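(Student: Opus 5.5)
The plan is to reduce both bounds to moments of a single coordinate of a uniform vector on the sphere, then interpolate between two ways of estimating $\normq{v}^2$: a direct computation when $q$ is small, and an intermediate exponent $q'\approx\log d$ when $q$ is large. This gives the two branches of the $\min$ in $\kappa_d$.

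\textbf{Bound (ii).} By Jensen, $\Ec{\normq{v}^2}{v}\le \big(\Ec{\normq{v}^q}{v}\big)^{2/q}=\big(d\,\mathbb{E}|v_1|^q\big)^{2/q}$ whenever $q\ge 2$. The moment $\mathbb{E}|v_1|^q$ has a closed form in Gamma functions, since $v_1^2\sim\mathrm{Beta}(1/2,(d-1)/2)$. I would show
\[
\mathbb{E}|v_1|^q\le \Big(\frac{q-1}{d}\Big)^{q/2},
\]
at least up to the constant absorbed in the lemma, either by a Gamma-ratio estimate or by comparison with a Gaussian, $v=g/\norm{g}$ with $\mathbb{E}|g_1|^q\le (q-1)^{q/2}$. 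This yields $\Ec{\normq{v}^2}{v}\le (q-1)d^{2/q-1}$.

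When $q$ is large (say $q>2\log d$), I would instead use $\normq{v}\le d^{1/q-1/q'}\normgen{v}_{q'}$ with $q'=2\log d$ (or $\min\{q,2\log d\}$). Then $d^{2/q'}=e$, and the factor $(q'-1)$ becomes of order $\log d$. Matching the constant $16\log d-8$ only requires keeping track of $e$ and the factor $2$, which is routine. The case $p=1$, $q=\infty$ is covered by this same branch.

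\textbf{Bound (i).} Apply Cauchy--Schwarz:
\[
\Ec{\dotprod{v,s}^2\normq{v}^2}{v}\le \sqrt{\Ec{\dotprod{v,s}^4}{v}}\,\sqrt{\Ec{\normq{v}^4}{v}}.
\]
By rotation invariance, $\Ec{\dotprod{v,s}^4}{v}=\norm{s}^4\,\mathbb{E}v_1^4=\frac{3\norm{s}^4}{d(d+2)}$. It remains to bound $\sqrt{\Ec{\normq{v}^4}{v}}$ by a constant times $\kappa_d$. I would repeat the argument for (ii) with exponent $4$ in place of $2$: use Jensen $\big(\mathbb{E}\normq{v}^{q}\big)^{4/q}$ when $q\ge 4$, or use a concentration bound (the map $v\mapsto\normq{v}$ is $1$-Lipschitz, so its variance is $O(1/d)$) to compare the fourth moment with the squared second moment. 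Combining gives a constant $\sqrt{3}\cdot C\le 6$.

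\textbf{Main obstacle.} The difficulty is the bookkeeping of constants, especially the fourth-moment bound for $q\in[2,4)$ and obtaining exactly $6$ and $16\log d-8$ for $d\ge 8$. In the small-$q$ range I would try $\normq{v}\le d^{1/q-1/2}\norm{v}=d^{1/q-1/2}$ first. This gives $d^{2/q-1}\le(q-1)d^{2/q-1}$ as long as $q\ge2$, which handles that range trivially.
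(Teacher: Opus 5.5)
The paper gives no proof of this lemma; it quotes it from Gorbunov et al. Your skeleton is the standard one, and these parts are sound:
\begin{itemize}
\item Jensen to reduce (ii) to $d\,\mathbb{E}|v_1|^q$.
\item The Gaussian comparison $\mathbb{E}|v_1|^q=\mathbb{E}|g_1|^q/\mathbb{E}\|g\|_2^q\le((q-1)/d)^{q/2}$. You should record the two facts it uses: $\|g\|_2$ is independent of $g/\|g\|_2$, and $\mathbb{E}\|g\|_2^q\ge d^{q/2}$.
\item Cauchy--Schwarz with $\mathbb{E}\langle v,s\rangle^4=3\|s\|_2^4/(d(d+2))$ for (i).
\item A switch to $q'=2\log d$ for the logarithmic branch.
\end{itemize}
However, both norm comparisons you write are reversed. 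For $r>q$ one has $\|x\|_r\le\|x\|_q\le d^{1/q-1/r}\|x\|_r$. Your inequalities $\|v\|_q\le d^{1/q-1/q'}\|v\|_{q'}$ (for $q>q'$) and $\|v\|_q\le d^{1/q-1/2}\|v\|_2$ (for $q>2$) both fail at the coordinate vector $v=e_1$. In the logarithmic branch of (ii) this is harmless. The correct inequality $\|v\|_q\le\|v\|_{q'}$ gives $\mathbb{E}\|v\|_q^2\le(q'-1)d^{2/q'-1}=e(2\log d-1)/d\le e(2\log d-1)\,d^{2/q-1}$. That is where the factor $e$ you allude to actually comes from.

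The second reversal is a genuine gap, because it is exactly how you dispose of what you call the main obstacle. That obstacle is proving $\sqrt{\mathbb{E}\|v\|_q^4}\lesssim(q-1)d^{2/q-1}$ for $q\in[2,4)$, where Jensen from $\mathbb{E}\|v\|_q^q$ goes the wrong way.
\begin{itemize}
\item A pointwise bound $\|v\|_q^2\le d^{2/q-1}$ would make the whole lemma trivial with no $\log d$ at all. That is impossible, since $\mathbb{E}\|v\|_\infty^2$ is of order $\log d/d$.
\item The true pointwise bound $\|v\|_q\le 1$ only gives $\mathbb{E}\|v\|_q^4\le 1$, which is far too weak.
\item Your concentration alternative is only gestured at. A variance bound on $\|v\|_q$ does not by itself control the fourth moment, and reaching the constant $6$ needs care.
\end{itemize}

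A clean repair that works for every $q\ge 2$:
\begin{itemize}
\item Since $2q\ge 4$, Jensen gives $\mathbb{E}\|v\|_q^4\le(\mathbb{E}\|v\|_q^{2q})^{2/q}$.
\item By Cauchy--Schwarz, $\|v\|_q^{2q}=(\sum_i|v_i|^q)^2\le d\sum_i|v_i|^{2q}$.
\item Your coordinate bound at exponent $2q$ then yields $\sqrt{\mathbb{E}\|v\|_q^4}\le(2q-1)d^{2/q-1}$.
\item Finally, $\sqrt{3}(2q-1)\le 6(q-1)$ for $q\ge 2$.
\end{itemize}
For $q\in[2,4]$ you can alternatively use $\|v\|_q\le d^{1/q-1/4}\|v\|_4$ and $\mathbb{E}\|v\|_4^4=3/(d+2)$, which give $\sqrt{\mathbb{E}\|v\|_q^4}\le\sqrt{3}\,d^{2/q-1}$.

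For the logarithmic branch of (i), apply the same fourth-moment bound at $q'=2\log d$ together with $\|v\|_q\le\|v\|_{q'}$. This works because $\sqrt{3}\,e(4\log d-1)\le 6(16\log d-8)$ for $d\ge 8$. For $q<q'$ the minimum in $\kappa_d$ is $q-1$ anyway, so the first branch suffices there.
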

\end{remark} 

\subsection{Analysis Tool: Smoothing technique}\label{sec:back_zero}
A useful technique for studying the convergence of zeroth-order algorithms involves a smoothed version of $f$ given by
\[f_\alpha(x) = \Ec{f(x+\alpha u)}{u},\]
where $u$ is a random vector uniformly distributed on the unit ball $\ball$ and $\alpha > 0$ is the smoothing parameter. 
The introduction of this object can be traced back at least to \cite[Eq 9.3.2]{nemirovskij1983problem}.
Its key property is the following.
\begin{lemma}\label{lem:unbiased_prop_g_xi_alpha}
    The random vector $\gest\Big(x,v,\xiest\Big)$ defined in Equation \eqref{eq:oracle_g_v_xi}, is an unbiased estimate of the gradient of the function $f_\alpha$, namely
    \begin{equation}\label{eq:unbiased_prop_g_xi_alpha}
    \Ecbig{\gest\Big(x,v,\xiest\Big)}{v,\xiest}=\nabla f_\alpha(x).
\end{equation}
\end{lemma}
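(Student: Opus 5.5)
The plan is to first remove the randomness in $\xiest$ by conditioning, and then apply the classical sphere–ball identity relating the smoothed function to surface integrals. Since $v$ and $\xiest$ are independent and the $\xi_i$ are i.i.d. with law $\mathcal{P}_\xi$, Fubini (or the tower property) lets us take the expectation over $\xiest$ first with $v$ fixed. Linearity of expectation and $\Ec{f(y,\xi_i)}{\xi_i}=f(y)$ for every $y$ then give
\[
\Ec{\gest\bpar{x,v,\xiest}}{\xiest} = \frac{d}{\alpha}\bpar{f(x+\alpha v)-f(x)}v .
\]
The mini-batch averaging disappears because each of the $m$ summands has the same conditional mean.

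Next we take the expectation over $v\sim\mathcal{U}(\sphere)$. The term $\frac{d}{\alpha}f(x)\Ec{v}{v}$ vanishes because $v$ and $-v$ have the same law. It remains to prove
\[
\frac{d}{\alpha}\Ec{f(x+\alpha v)v}{v} = \nabla f_\alpha(x).
\]
For this we write $f_\alpha(x)=\frac{1}{\mathrm{vol}(\alpha\ball)}\int_{\alpha\ball} f(x+y)\,\text{d}y$ and differentiate under the integral sign, which is justified by smoothness of $f$. This gives $\nabla f_\alpha(x)=\frac{1}{\mathrm{vol}(\alpha\ball)}\int_{\alpha\ball}\nabla f(x+y)\,\text{d}y$. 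The divergence theorem, applied componentwise, turns this into $\frac{1}{\mathrm{vol}(\alpha\ball)}\int_{\alpha\sphere} f(x+y)\frac{y}{\norm{y}}\,\text{d}S(y)$.

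Finally we use $\mathrm{area}(\alpha\sphere)/\mathrm{vol}(\alpha\ball)=d/\alpha$ and rescale $y=\alpha v$. The surface integral becomes $\frac{d}{\alpha}\Ec{f(x+\alpha v)v}{v}$, which completes the argument.

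The only delicate step is this Stokes/divergence-theorem identity together with the ratio $d/\alpha$. This is the standard argument of Flaxman et al., and it requires only that $f$ be differentiable with locally integrable gradient, which holds under the smoothness assumption. The interchange of $\mathbb{E}_\xi$ with evaluation is also a point to justify. We will assume, as implicit in the problem setup, that $f(\cdot,\xi)$ is integrable, so that the stochastic evaluations are unbiased.
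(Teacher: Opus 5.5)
Your proof is correct and follows essentially the paper's argument: you average out the i.i.d.\ mini-batch, discard the $f(x)v$ term using $\mathbb{E}_v[v]=0$, and invoke the sphere--ball identity $\nabla f_\alpha(x)=\frac{d}{\alpha}\mathbb{E}_v[f(x+\alpha v)v]$ of Flaxman et al., which you re-derive via the divergence theorem instead of citing it. The one difference is the order: the paper applies that identity to each $f(\cdot,\xi)$ and then averages over $\xi$, which tacitly exchanges $\nabla$ and $\mathbb{E}_\xi$, whereas you average over $\xi$ first and apply it once to the deterministic $f$, so no such exchange is needed at that step.
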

\begin{proof}
We note first that by definition of $f_\alpha$, we have
\begin{equation}\label{eq:general_f_alpha_eq1}
    f_\alpha(x)=\Ec{f_\alpha(x,\xi)}{\xi} \quad\text{ where }\quad
    f_\alpha(x,\xi)=\Ec{f( x+\alpha u,\xi)}{u}.
    \end{equation}
Since the random variables $\xi_i$, $1\leq i\leq m$ have the same distribution $\mathcal{P}_\xi$, we obtain from the definition \eqref{eq:oracle_g_v_xi} that
\begin{equation}\label{eq_lem_unbiased_prop_g_xi_alpha_eq1}
    \Ecbig{\gest\Big(x,v,\xiest\Big)}{v,\xiest}=\Ec{\gest(x,v,\xi)}{v,\xi}.
\end{equation}
Moreover, we have from \cite{flaxman2004online} that
\[ \nabla f_\alpha(x,\xi) = \frac{d}{\alpha}\Ec{f(x + \alpha v,\xi)v}{v \sim \sphere},\]
which leads with the fact that $\Ec{v}{v\sim \sphere} = 0$ to
\begin{equation*}
    \nabla f_\alpha(x,\xi)= \Ec{\frac{d}{\alpha}(f(x + \alpha v,\xi)-f(x,\xi))v}{v \sim \sphere}
\end{equation*}
As a consequence, we have with the definition \eqref{eq:general_f_alpha_eq1} that
\begin{equation}\label{eq_lem_unbiased_prop_g_xi_alpha_eq2}
    \nabla f_\alpha(x)= \Ec{\gest(x,v,\xi)}{v,\xi},
\end{equation}
where $v$ has uniform distribution on $\sphere$. We conclude with \eqref{eq_lem_unbiased_prop_g_xi_alpha_eq1} and \eqref{eq_lem_unbiased_prop_g_xi_alpha_eq2}.
\end{proof}
Lemma \ref{lem:unbiased_prop_g_xi_alpha} indicates that although $g_\alpha$ is constructed based on zeroth-order information, it is an unbiased estimator of $\nabla f_\alpha$. 
Importantly, some properties of $f$--such as quasar convexity--transfer to $f_\alpha$, and moreover, the distance between $f$ and $f_\alpha$ reduces to zero as $\alpha$ goes to zero. Additional assumptions will ensure a satisfying control of the variance of the estimator, and a precise control of the distance between $f$ and $f_\alpha$. 
\begin{assumption}\label{ass:all_l_smooth}
For any random vector $\xi \in \Xi$, there exists $L(\xi)>0$ such that for any $(x,y) \in \R^d \times \R^d$, we have
\[\norm{\nabla f(x,\xi)-\nabla f(y,\xi)} \le L(\xi)\norm{x-y},\] or equivalently 
    \[\vert f(x,\xi) + \dotprod{\nabla f(x,\xi),y-x} - f(y,\xi) \vert\le \frac{L(\xi)}{2}\norm{x-y}^2.\] Also, there exists $L>0$ such that $\sqrt{\Ec{L(\xi)^2}{\xi}}\leq L <+\infty.$
\end{assumption}
Assumption \ref{ass:all_l_smooth} implies that the gradient of $f$ is $L$-Lipschitz continuous.
Even though the estimate $g_\alpha \Big(\cdot,v,\xiest\Big)$ does not require nor differentiability of $f$ nor to have $\nabla f$ is Lipschitz, such smoothness property is necessary to deduce further our accelerated result. Moreover, it allows for the following control (see Lemma \ref{lem:function_lipschitz_transfer}): 
\begin{equation*}
    \vert f(x)-f_\alpha(x) \vert \le \frac{L\alpha^2}{2} \dfrac{d}{d+2}.
\end{equation*}
To our knowledge, this kind of bound exists under the assumption that $f$ is Lipschitz \cite{duchi2012randomized}, or for the gaussian smoothing \cite{nesterov2017random} but in this case the bound involves a factor $d$ due to the higher variance induced by this distribution.
Lastly, a satisfying variance-control will follow from the following assumption \cite{cevher2019linear,vaswani2019fast}.
\begin{assumption}[Strong growth condition]\label{ass:sgc}
There exist positive constants $\sigma^2$ and $\rho\geq 1$ such that for all $x\in \R^d$,
    \begin{equation*}
        \mathbb{E}_{\xi}\left[ \norm{\nabla f(x,\xi)}^2 \right] \leq \rho \norm{\nabla f(x)}^2 + \sigma^2.
    \end{equation*}
    
\end{assumption}
Assumption \ref{ass:sgc} includes as special cases the multiplicative noise setting when $\sigma^2 = 0$, and the bounded variance setting when $\rho=1$. 
In machine learning, the case $\sigma^2 = 0$ is a special instance of the \textit{interpolation} setting \cite{ma2018power}. It allows for the following control of the estimator $g_\alpha$, defined in \eqref{eq:oracle_g_v_xi}.
\begin{restatable}[]{lemma}{gsphereboundq}\label{lem:g_sphere_bound_q}
Suppose that Assumptions \ref{ass:all_l_smooth} and \ref{ass:sgc} are satisfied. Let $q \in [2,+\infty]$ and $\kappa_d :=  \min\{q-1,16\log(d)-1 \}d^{\frac{2}{q}-1}$. Therefore, for all $x\in \R^d$,
\begin{equation*}
    \Ec{\normq{\gest\left(x,\vest,\xiest\right)}^2 }{v,\xiest}
        \leq \dfrac{d\kappa_d}{m}\Bigg[24(\rho+m)\norm{\nabla f_\alpha(x)}^2+25\alpha^2L^2(\rho+m)+24\sigma^2\Bigg]
\end{equation*}
\end{restatable}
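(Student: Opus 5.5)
We plan to expand the squared $q$-norm of the mini-batch estimator and handle the random direction $v$ and the samples $\xiest$ separately. Write $\delta_i := \frac{d}{\alpha}\big(f(x+\alpha v,\xi_i)-f(x,\xi_i)\big)$, so that $\gest(x,v,\xiest)=\big(\frac1m\sum_i\delta_i\big)v$ and
\[
\normq{\gest}^2=\Big(\frac1m\sum_{i=1}^m\delta_i\Big)^2\normq{v}^2 .
\]
Conditionally on $v$, the $\delta_i$ are i.i.d. Expanding the square gives
\[
\frac{1}{m^2}\Big(m\,\mathbb{E}[\delta_1^2\mid v]+m(m-1)\,\mathbb{E}[\delta_1\mid v]^2\Big)\le \frac1m\mathbb{E}[\delta_1^2\mid v]+\mathbb{E}[\delta_1\mid v]^2 .
\]
This split is what produces the factors $\rho/m$ (variance part) and $1$ (mean part) inside $(\rho+m)/m$.

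\textbf{Step 1: Taylor split of $\delta_i$.} By Assumption \ref{ass:all_l_smooth},
\[
f(x+\alpha v,\xi)-f(x,\xi)=\alpha\dotprod{\nabla f(x,\xi),v}+r(\xi,v), \qquad |r|\le \tfrac{L(\xi)}{2}\alpha^2 ,
\]
using $\norm{v}=1$. Hence $\delta_1^2\le 2d^2\dotprod{\nabla f(x,\xi),v}^2+\tfrac{d^2\alpha^2}{2}L(\xi)^2$. Similarly, $\mathbb{E}[\delta_1\mid v]^2\le 2d^2\dotprod{\nabla f(x),v}^2+\tfrac{d^2\alpha^2L^2}{2}$, by Jensen and $\mathbb{E}L(\xi)\le L$.

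\textbf{Step 2: integrate in $v$.} Multiply by $\normq{v}^2$ and use Lemma \ref{lem:bound_qnorm}: part (i) gives $\Ec{\dotprod{v,s}^2\normq{v}^2}{v}\le \frac{6\kappa_d}{d}\norm{s}^2$, and part (ii) bounds the $\alpha^2$ terms through $\Ec{\normq{v}^2}{v}\le\kappa_d$. We must check that the exponent conventions match: $q\ge2$ here, so $p\in[1,2]$ is its conjugate. We must also check the constant in $\kappa_d$; the lemma's $16\log d-1$ is slightly larger than $16\log d-8$, so the bound remains valid.

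This yields a bound of the form
\[
d\kappa_d\Big[\tfrac{12}{m}\Ec{\norm{\nabla f(x,\xi)}^2}{\xi}+12\norm{\nabla f(x)}^2+\tfrac{\alpha^2L^2}{2}\big(\tfrac1m+1\big)\Big],
\]
with $d$-scaling $d^2\cdot\frac{\kappa_d}{d}=d\kappa_d$ for the gradient terms. For the $\alpha^2$ term the scaling is only $d^2\kappa_d$ rather than $d\kappa_d$, which is the main obstacle; see below. Next, Assumption \ref{ass:sgc} replaces $\Ec{\norm{\nabla f(x,\xi)}^2}{\xi}$ by $\rho\norm{\nabla f(x)}^2+\sigma^2$.

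\textbf{Step 3: pass from $\nabla f$ to $\nabla f_\alpha$.} Use $\norm{\nabla f(x)}^2\le 2\norm{\nabla f_\alpha(x)}^2+2\norm{\nabla f(x)-\nabla f_\alpha(x)}^2$ together with
\[
\norm{\nabla f_\alpha(x)-\nabla f(x)}=\norm{\Ec{\nabla f(x+\alpha u)-\nabla f(x)}{u}}\le L\alpha .
\]
This turns $12\cdot 2=24$ into the coefficient of $(\rho+m)\norm{\nabla f_\alpha}^2/m$ and contributes $24\alpha^2L^2(\rho+m)/m$. Adding the remainder terms gives the $25\alpha^2L^2$ coefficient.

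\textbf{Main obstacle.} The difficulty is the remainder term: naively it carries $d^2\Ec{\normq{v}^2}{v}=d^2\kappa_d$, not $d\kappa_d$. To fix this, we will avoid bounding $r$ by a constant times $\normq{v}^2$. Instead, we will expand around the smoothed function, writing the difference via $\nabla f(x+\theta\alpha v,\xi)$, or center the finite difference so that the second-order part is controlled by $\dotprod{\nabla f(x+\theta\alpha v,\xi)-\nabla f(x,\xi),v}$ times $\normq{v}$. If that still fails, we will rely on the sharper concentration $\dotprod{v,s}^2\lesssim\norm{s}^2/d$ from Lemma \ref{lem:bound_qnorm}(i). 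We expect this bookkeeping to be the delicate part; the rest is routine.
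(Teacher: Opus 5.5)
\paragraph{Verdict: the flagged step cannot be closed, because the statement is false as written.}
Your Steps 1--3 are sound and follow essentially the paper's route. That route is: a Taylor split into $\alpha\dotprod{\nabla f(x,\xi_i),v}$ plus a remainder bounded by $\alpha^2L(\xi_i)/2$, then Lemma~\ref{lem:bound_qnorm}, the strong growth condition, and $\norm{\nabla f-\nabla f_\alpha}\le L\alpha$. Your conditional-on-$v$ i.i.d.\ split is a tidier way to get the factor $(\rho+m)/m$ than the paper's expansion of cross terms. The genuine gap is exactly the step you flag, and none of your proposed repairs can close it: the remainder really contributes a term of order $d^2\kappa_d\alpha^2L^2$.

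For a counterexample, take $f(x,\xi)=\frac{L}{2}\norm{x}^2$, the point $x=0$, and $q=2$, so $\kappa_d=1$. This satisfies Assumption~\ref{ass:all_l_smooth} with $L(\xi)=L$ and Assumption~\ref{ass:sgc} with $\rho=1$ and any $\sigma^2>0$.
\begin{itemize}
\item For every $v\in\sphere$, $f(\alpha v,\xi)-f(0,\xi)=\frac{L\alpha^2}{2}$. Hence $\gest(0,v,\xiest)=\frac{dL\alpha}{2}v$ and the left-hand side equals $\frac{d^2L^2\alpha^2}{4}$.
\item Meanwhile $\nabla f_\alpha(0)=L\alpha\Ec{u}{u}=0$, so the right-hand side is $\frac{d}{m}\big[25\alpha^2L^2(1+m)+24\sigma^2\big]\le 50d\alpha^2L^2+24d\sigma^2$.
\item The claim therefore fails as soon as $d>200$ and $\sigma^2$ is small, e.g.\ $d=1000$ as in the experiments.
\end{itemize}
In this example the linear term vanishes and the Taylor bound on the remainder is attained for every $v$. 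So neither a mean-value form $\dotprod{\nabla f(x+\theta\alpha v,\xi),v}$, nor recentering at $f_\alpha$, nor the concentration of $\dotprod{v,s}^2$ from Lemma~\ref{lem:bound_qnorm}(i) can remove the extra factor $d$.

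\paragraph{The paper's proof has the same defect.}
You are not missing a trick from the paper.
\begin{itemize}
\item The remainder estimate \eqref{eq_lem_g_sphere_bound_q_eq4} really gives $\frac{m^2\alpha^4L^2}{4}\kappa_d$. After the prefactor $\frac{2d^2}{\alpha^2m^2}$ from \eqref{eq_lem_g_sphere_bound_q_eq1}, this becomes $\frac{d^2\alpha^2L^2\kappa_d}{2}$.
\item The final line counts this as at most $\frac{d\kappa_d}{m}\alpha^2L^2(\rho+m)$, which is the passage from $24$ to $25$. That would require $d\le 2(\rho+m)/m$.
\item The paper's own $q=2$ Lemma~\ref{lem:g_sphere_bound} does keep a $\frac{d^2\alpha^2L^2}{2}$ term.
\end{itemize}

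\paragraph{What you should state instead.}
Your Steps 1--3 actually prove the following, and this is the correct statement:
\[
\Ec{\normq{\gest(x,\vest,\xiest)}^2}{v,\xiest}\le\frac{d\kappa_d}{m}\Big[24(\rho+m)\norm{\nabla f_\alpha(x)}^2+24\alpha^2L^2(\rho+m)+12\sigma^2\Big]+\frac{d^2\kappa_d\alpha^2L^2}{2}\Big(1+\frac{1}{m}\Big).
\]
Accordingly, your Step 2 display should carry $d^2\kappa_d$, not $d\kappa_d$, on the $\alpha^2$ term.

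This correction is harmless downstream. In the proof of Theorem~\ref{thm:acc_nest_zero_v_xi}, the extra term is multiplied by $(\gamma'_s)^2/(2C)=2Cs^2/\tau^2$ before integration in $s$, and $C\propto(d^2\kappa_d)^{-1}$. So it only adds a dimension-free multiple of $L\alpha^2t$ to the final bound.
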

\section{Convergence results}\label{sec:main_results}
We first derive a result for our continuous-time dynamics \eqref{eq:nest_continuizedv2}. 

\begin{theorem}\label{thm:acc_nest_zero_v_xi}
Suppose that Assumptions \ref{ass:quasar_conv}, \ref{ass:all_l_smooth}, and \ref{ass:sgc} are satisfied.
Let $\xz$ be the solution of \eqref{eq:nest_continuizedv2} with $\gest(\cdot)$ given in \eqref{eq:oracle_g_v_xi} and the choice of parameters $  \eta_s = \frac{2}{s \tau}, \gamma'_s = \frac{2C s}{\tau}$ and $ \gamma_s = \frac{m}{4dL(\rho+m-1)}$, where $C = \frac{\tau^2m^2}{384d^2L\kappa_d(\rho+m)(\rho+m-1)}$.  Then, we have that 
\begin{equation*}\label{thm:acc_nest_zero_v_xi_res1}
    \begin{aligned}
        &\E{f(x_t)-f^\ast} \\
        &\leq \dfrac{384d^2\kappa_dL}{\tau^2t^2}\left(\dfrac{\rho+m}{m}\right)^2V(x_0,x^\ast)
    +\left[\dfrac{L\alpha^2(d+17)}{192d}+\dfrac{\sigma^2}{12dLm}\right]t+\dfrac{\alpha^2Ld(\tau+2)}{2\tau(d+2)}.
    \end{aligned}
\end{equation*}
\end{theorem}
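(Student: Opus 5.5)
We run a Lyapunov analysis of the continuous-time process \eqref{eq:nest_continuizedv2} on the smoothed function $f_\alpha$, and transfer back to $f$ at the very end. Take
\[
\phi_t := A_t\bigl(f_\alpha(x_t)-f_\alpha(x^\ast)\bigr) + B_t\, V(z_t,x^\ast),\qquad A_t = Ct^2,\quad B_t=1 .
\]
The parameter choices ($\gamma'_s = 2Cs/\tau$, $\eta_s = 2/(s\tau)$) are tailored to this choice of $A_t$, since we expect $\gamma'_t \approx A'_t/(\tau B_t)$ and $\eta_t \approx A'_t/(\tau A_t)$. First we transfer the structure to $f_\alpha$:
\begin{itemize}
\item $f_\alpha$ is $L$-smooth, as an average of $L$-smooth functions.
\item $f_\alpha$ is approximately quasar-convex. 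Averaging Assumption \ref{ass:quasar_conv} at the shifted points $x+\alpha u$ and using $|f-f_\alpha|\le \frac{L\alpha^2 d}{2(d+2)}$ (Lemma \ref{lem:function_lipschitz_transfer}) gives
\[
f_\alpha(x)-f_\alpha(x^\ast)+\tfrac1\tau\langle\nabla f_\alpha(x),x^\ast-x\rangle \le c\,\alpha^2 L
\]
for an explicit $c$ of order $d/(d+2)$.
\end{itemize}
The final term $\frac{\alpha^2Ld(\tau+2)}{2\tau(d+2)}$ in the statement presumably collects this slack together with the two $f\leftrightarrow f_\alpha$ conversions.

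Next we apply the Itô formula for Poisson-driven SDEs to $\phi_t$. Its drift has three parts:
\begin{itemize}
\item the $\mathrm{d}t$ part: $A'_t(f_\alpha(x_t)-f^\ast_\alpha) + A_t\eta_t\langle\nabla f_\alpha(x_t),z_t-x_t\rangle$;
\item the expected jump of $A_t f_\alpha$;
\item the expected jump of $V(z_t,x^\ast)$.
\end{itemize}
For the jump of $f_\alpha$, smoothness together with Lemma \ref{lem:unbiased_prop_g_xi_alpha} gives
\[
\mathbb{E}\bigl[f_\alpha(x-\gamma g_\alpha)\bigr]-f_\alpha(x)\le -\gamma\norm{\nabla f_\alpha}^2+\tfrac{L\gamma^2}{2}\,\mathbb{E}\norm{g_\alpha}^2 .
\]
For the mirror jump we use the standard three-point inequality for $\text{Prox}$:
\[
V(z^+,x^\ast)-V(z,x^\ast)\le \gamma'\langle g_\alpha, x^\ast-z\rangle + \tfrac{\gamma'^2}{2}\normq{g_\alpha}^2 ,
\]
which uses $1$-strong convexity of $\proxf$ with respect to $\normp{\cdot}$. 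Taking expectations turns the linear term into $\gamma'\langle\nabla f_\alpha(x_t),x^\ast-z_t\rangle$. Splitting $x^\ast-z_t = (x^\ast-x_t)+(x_t-z_t)$, the cross terms cancel because $A_t\eta_t=\gamma'_t$. The remaining term $\gamma'_t\langle\nabla f_\alpha(x_t),x^\ast-x_t\rangle$ is bounded via approximate quasar-convexity by $-\tau\gamma'_t(f_\alpha-f_\alpha^\ast)+\tau\gamma'_t c\alpha^2L$, and $\tau\gamma'_t = 2Ct = A'_t$ cancels the $A'_t$ term.

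The main obstacle is controlling the second moments. For the Euclidean step we need a Lemma \ref{lem:g_sphere_bound_q}-type bound with $q=2$; that lemma carries the factor $\kappa_d$, whereas $\gamma_t$ involves $\rho+m-1$. So I would redo the $q=2$ computation sharply, expecting
\[
\mathbb{E}\norm{g_\alpha}^2\le \tfrac{d}{m}\bigl[(\rho+m-1)\cdot 2\norm{\nabla f_\alpha}^2+O(\alpha^2L^2 d(\rho+m))+2\sigma^2\bigr],
\]
using smoothness of $f(\cdot,\xi)$ to compare finite differences with directional derivatives. With $\gamma=\frac{m}{4dL(\rho+m-1)}$ the gradient step then yields $-\frac{\gamma}{2}\norm{\nabla f_\alpha}^2$ plus noise. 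The mirror term $\frac{\gamma_t'^2}{2}\normq{g_\alpha}^2$, bounded by Lemma \ref{lem:g_sphere_bound_q}, contributes $\frac{2C^2t^2}{\tau^2}\cdot\frac{24d\kappa_d(\rho+m)}{m}\norm{\nabla f_\alpha}^2$. The definition of $C$ is exactly what makes this at most $A_t\frac{\gamma}{2}\norm{\nabla f_\alpha}^2$; this constant check is the delicate step.

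Finally, the leftover noise terms, of order $Ct^2\bigl(\alpha^2L+\sigma^2/(dLm)\bigr)$ plus $Ct\,\alpha^2L$, are integrated from $0$ to $t$ to give $\mathbb{E}\phi_t\le \phi_0 + O\bigl(Ct^3(\cdots)\bigr)$. Note $\phi_0=V(z_0,x^\ast)$ since $A_0=0$, with $z_0=x_0$ so that $\phi_0=V(x_0,x^\ast)$. Dividing by $Ct^2$ and converting $f_\alpha$ back to $f$ yields the three terms of the statement, with $1/C=\frac{384d^2L\kappa_d(\rho+m)(\rho+m-1)}{\tau^2 m^2}\le\frac{384d^2L\kappa_d}{\tau^2}\bigl(\frac{\rho+m}{m}\bigr)^2$. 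The process is also justified to be a well-defined martingale-plus-drift, with a local-martingale localization argument if needed.
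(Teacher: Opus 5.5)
Your plan follows the paper's proof step for step: the same Lyapunov function with $a_t=Ct^2$ and unit weight on $V(z_t,x^\ast)$, the three-point bound for the mirror jump, and a descent lemma fed by a sharp $q=2$ second-moment bound. The paper's Lemma~\ref{lem:g_sphere_bound} has coefficient $4d(\rho+m-1)/m$, which with $\gamma=\frac{m}{4dL(\rho+m-1)}$ gives exactly $-\frac{\gamma}{2}\norm{\nabla f_\alpha(x)}^2$. The cancellations $a_t\eta_t=\gamma'_t$ and $\tau\gamma'_t=\frac{\mathrm{d}a_t}{\mathrm{d}t}$ and the equation fixing $C$ are also the same.

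One concrete step does not deliver the stated bound: anchoring the Lyapunov function at $f_\alpha(x^\ast)$ instead of $f^\ast$. Averaging Assumption~\ref{ass:quasar_conv} at $x+\alpha u$ gives (Lemma~\ref{lem:sphere:quasar_convex})
\[
f_\alpha(x)+\tfrac{1}{\tau}\dotprod{\nabla f_\alpha(x),x^\ast-x}\le f^\ast+\tfrac{\alpha^2L}{\tau}\tfrac{d}{d+2}.
\]
So your constant must be $c=\frac{d}{\tau(d+2)}$; the $1/\tau$ is unavoidable here and is what produces the $1/\tau$ in the final term. Passing to $f_\alpha(x^\ast)\ge f^\ast$ discards the favorable term $-(f_\alpha(x^\ast)-f^\ast)$. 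Your bookkeeping then pays the quasar slack plus \emph{two} conversions, $f(x_t)-f_\alpha(x_t)$ and $f_\alpha(x^\ast)-f^\ast$, each up to $\frac{L\alpha^2 d}{2(d+2)}$. You end with $\frac{\alpha^2Ld}{d+2}\bigl(1+\frac1\tau\bigr)$, strictly larger than the stated $\frac{\alpha^2Ld(\tau+2)}{2\tau(d+2)}=\frac{\alpha^2Ld}{d+2}\bigl(\frac12+\frac1\tau\bigr)$. The fix is the paper's choice $\varphi=a_t(f_\alpha(x_t)-f^\ast)+V(z_t,x^\ast)$. Then the slack contributes $\frac{\alpha^2Ld}{\tau(d+2)}$ after dividing by $Ct^2$, and only the single conversion at $x_t$ (Lemma~\ref{lem:function_lipschitz_transfer}) is needed.

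You also invoke ``the Itô formula for Poisson-driven SDEs'' directly on $\phi_t$. But $\proxf$ is only assumed differentiable, so $V(\cdot,x^\ast)$ need not be $C^1$ (e.g.\ for the $\ell_\kappa$ prox-function of Remark~\ref{rem:prox}). The continuized Itô formula of earlier work assumes a smooth Lyapunov function. Your list of drift terms, with no $\mathrm{d}t$ contribution from $V$, is correct, but it needs the justification of Proposition~\ref{prop:sto_calc_abridged_with_jump}. Write $\varphi=\varphi_0+\varphi_1(z)$ with $\varphi_0$ smooth. Since $z_t$ is piecewise constant, $\varphi_1(z_t)-\varphi_1(z_0)$ is exactly the sum of its jumps, an integral against $\mathrm{d}N$. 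Compensating it gives the $\mathrm{d}s$ term plus a martingale, for any measurable $\varphi_1$.

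Finally, the statement has explicit constants, so your $O(\alpha^2L^2d(\rho+m))$ remainder must be the specific $\frac{d\alpha^2L^2}{2m}[dm+8(\rho+m-1)]$ of Lemma~\ref{lem:g_sphere_bound} to land on $\frac{L\alpha^2(d+17)}{192d}$. The standard two Young splits (finite difference vs.\ directional derivative, then $\nabla f$ vs.\ $\nabla f_\alpha$) give that remainder with gradient coefficient $4(\rho+m-1)$ rather than your $2(\rho+m-1)$. That coefficient already suffices for $-\frac{\gamma}{2}$.
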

\begin{proof}[Sketch of Proof]
    The proof is based on the following Lyapunov function
\[
    \varphi(t,x_t,z_t) =  a_t(f_\alpha(x_t)-f^\ast) + V(z_t,x^\ast),\]
    with $a_t=Ct^2$.
    Intuitively, this function can be stochastically derivated with respect to the time thanks to an Itô formula. Importantly, previous work using the continuized technique \cite{even2021continuized,wang2023continuizedaccelerationquasarconvex} used an Itô formula that assumes that $\varphi$ is a smooth function. However, in our setting, $V$ is not necessarily differentiable with respect to its first variable. This can be solved by modifying the Itô formula to handle our non-smooth function $\varphi$. The full proof of Theorem \ref{thm:acc_nest_zero_v_xi} is postponed to Section \ref{app:proof_acc_nest_zero_v_xi}.
\end{proof}

While the first term in the bound of
Theorem \ref{thm:acc_nest_zero_v_xi} is of the order $\bigO(1/t^2)$, the second term grows linearly with $t$ and the third is constant with time. However, these two last terms can be arbitrarily reduced by taking $\alpha$ close enough to zero and $m$ large enough. To reach a given precision $\varepsilon > 0$, it then indicates a compromise between the time $t$, the smoothing parameter $\alpha$ and the batch size $m$. Before doing so, we now state a similar result that holds for the algorithm \eqref{alg:cont}.
\begin{theorem}\label{lem:acc_nest_zero_v_xi_with_Tk}
 Let $\{T_i\}_{0\le i \le k}$ random variables such that $T_{i+1} - T_i$ are  i.i.d  of law $\mathcal{E}(1)$ for all $1\leq i \leq k$, with convention $T_0 = 0$. Suppose that Assumptions \ref{ass:quasar_conv}, \ref{ass:all_l_smooth}, \ref{ass:sgc} are verified. Let the iterations of \eqref{alg:cont} with the choice of parameters $\eta_k = \frac{2}{\tau T_k}$, $\gamma'_k = \frac{2C T_k}{\tau}$ and $ \gamma_s = \frac{m}{4dL(\rho+m-1)}$, where $C = \frac{\tau^2m^2}{384d^2L\kappa_d(\rho+m)(\rho+m-1)}$. Then
 \begin{equation*}
    \begin{aligned}
        \E{ T_k^2(f(\xalgt{k})-f^\ast)} 
        &\leq \dfrac{384d^2\kappa_dL}{\tau^2}\left(\dfrac{\rho+m}{m}\right)^2V(x_0,x^\ast)+\dfrac{\alpha^2Ld(\tau+2)}{2\tau(d+2)}k(k+1)\\
    &\qquad+\left[\dfrac{L\alpha^2(d+17)}{192d}+\dfrac{\sigma^2}{12dLm}\right]k(k+1)(k+2).
    \end{aligned}
\end{equation*}
\end{theorem}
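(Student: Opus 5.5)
The plan is to transfer the continuous-time analysis behind Theorem \ref{thm:acc_nest_zero_v_xi} to the random times $T_k$, using Proposition \ref{prop:disc}: the iterates of \eqref{alg:cont} are exactly $\xalg{k}=x_{T_k}$ for the process \eqref{eq:nest_continuizedv2}. I will reuse the Lyapunov function of that proof,
\[
\varphi(t,x_t,z_t)=Ct^2\bigl(f_\alpha(x_t)-f^\ast\bigr)+V(z_t,x^\ast),
\]
but evaluate it at the stopping time $T_k$ instead of a deterministic $t$.

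\textbf{Itô step.} The modified (non-smooth) Itô formula from the proof of Theorem \ref{thm:acc_nest_zero_v_xi} gives $\varphi(t,x_t,z_t)=\varphi(0,x_0,z_0)+\int_0^t \mathcal{A}_s\,\text{d}s+M_t$, where $M_t$ is a martingale. That proof shows the drift satisfies
\[
\mathcal{A}_s\le c_1 s+c_2 s^2,
\]
with $c_2=C\bigl[\tfrac{L\alpha^2(d+17)}{192d}+\tfrac{\sigma^2}{12dLm}\bigr]\cdot 3$ coming from the variance terms of Lemma \ref{lem:g_sphere_bound_q}, and $c_1$ proportional to $C\alpha^2 L d(\tau+2)/(\tau(d+2))$ coming from quasar-convexity transferred to $f_\alpha$. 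I will recover these constants from the deterministic bound: after dividing by $Ct^2$, the error terms there are $\propto t$ and constant, which forces drift contributions of order $s^2$ and $s$ respectively.

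\textbf{Optional stopping.} $T_k$ is a stopping time for the filtration generated by the Poisson measure, with $\E{T_k^j}<\infty$. By optional stopping (localize via $T_k\wedge n$, then let $n\to\infty$ using integrability of the jump sizes, which Lemma \ref{lem:g_sphere_bound_q} controls) we get $\E{M_{T_k}}=0$. Hence
\[
\E{CT_k^2\bigl(f_\alpha(\xalg{k})-f^\ast\bigr)}\le V(x_0,x^\ast)+c_1\tfrac{\E{T_k^2}}{2}+c_2\tfrac{\E{T_k^3}}{3}.
\]
Since $T_k$ is Gamma$(k,1)$, we have $\E{T_k^2}=k(k+1)$ and $\E{T_k^3}=k(k+1)(k+2)$. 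These moments produce exactly the polynomial factors appearing in the statement.

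\textbf{Returning to $f$.} Divide by $C$, which gives the factor $384d^2\kappa_d L(\rho+m)^2/(\tau^2m^2)$ up to the harmless replacement of $\rho+m-1$ by $\rho+m$. Then convert $f_\alpha$ to $f$ using $|f-f_\alpha|\le \tfrac{L\alpha^2}{2}\tfrac{d}{d+2}$ (Lemma \ref{lem:function_lipschitz_transfer}); this adds a term $\tfrac{L\alpha^2 d}{2(d+2)}\E{T_k^2}$, which merges into the $k(k+1)$ term with coefficient $(\tau+2)/\tau$.

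\textbf{Main obstacle.} The delicate step is justifying the Itô/Dynkin identity and the optional stopping at the random time $T_k$ for the non-smooth $\varphi$. In particular, one must check that the martingale part is a true martingale up to $T_k$, given the time-dependent step $\gamma'_s\propto s$ and only second-moment bounds on $\gest$. My first attempt would be to work directly on the discrete chain: compute $\E{\varphi(T_{k+1},\cdot)-\varphi(T_k,\cdot)\mid\mathcal{F}_{T_k}}$ by integrating the deterministic flow on $(T_k,T_{k+1})$ against the $\mathcal{E}(1)$ density plus one jump. This yields the same drift bound inductively without any general martingale theory.
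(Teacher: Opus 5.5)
Your proposal is correct and follows the paper's architecture. It uses the same Lyapunov function, the pathwise inequality $\varphi(\overline{x}_t)\le\varphi(\overline{x}_0)+\int_0^tK'_s\,\text{d}s+M_t$ from the modified It\^o formula, evaluation at $T_k$ through Proposition \ref{prop:disc}, and the Gamma moments $\E{T_k^2}=k(k+1)$, $\E{T_k^3}=k(k+1)(k+2)$. Converting $f_\alpha\to f$ after stopping rather than pathwise before it, as the paper does, is immaterial. Two bookkeeping remarks:
\begin{itemize}
\item The constants $c_1,c_2$ should be read off from \eqref{eq:proof_acc_zero_v_xi_eq16}, not inferred from the final expectation bound.
\item The drift coefficient is $c_1=2C\alpha^2Ld/(\tau(d+2))$, with no factor $\tau+2$. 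That factor appears only once the conversion term is added, which your last step does correctly.
\end{itemize}

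The genuine difference is how the stopping at $T_k$ is justified.

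\textbf{The paper's route.} The paper never needs $\E{M_{T_k}}=0$. Theorem \ref{thm:martingal_stopping} applies optional stopping only at the bounded times $t\wedge T_k$ and then uses Fatou's lemma. This is legitimate because the stopped quantity is nonnegative and $U_t$ is deterministic and nondecreasing.

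\textbf{Your localization step.} It should end the same way. Passing $\E{M_{T_k\wedge n}}=0$ to the limit would require uniform integrability, and Lemma \ref{lem:g_sphere_bound_q} alone does not provide it. The jumps of $M$ involve $\varphi$ evaluated at post-jump points, weighted by the growing $a_s$ and $\gamma'_s$.

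\textbf{Your discrete-chain fallback.} This is a genuinely different and valid route. Given $\mathcal{F}_{T_j}$, the path on $[T_j,T_{j+1})$ is deterministic and stays on the segment between $x_{T_j}$ and $z_{T_j}$, so everything grows polynomially in time. Moreover, $T_{j+1}-T_j\sim\mathcal{E}(1)$ is independent of $\mathcal{F}_{T_j}$. Integrating by parts against $e^{-u}$ and using the pointwise drift bound \eqref{eq:proof_acc_zero_v_xi_eq15} gives $\E{\varphi(\overline{x}_{T_{j+1}})\mid\mathcal{F}_{T_j}}\le\varphi(\overline{x}_{T_j})+\E{\int_{T_j}^{T_{j+1}}K'_s\,\text{d}s\mid\mathcal{F}_{T_j}}$. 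Summing over $j$ yields the same bound.

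\textbf{Comparison.} Your chain argument never requires $M$ to be a true martingale. The paper's Proposition \ref{prop:sto_calc_abridged_with_jump} asserts that property without checking integrability. In exchange, the paper's route is shorter, since it reuses the continuous-time inequality verbatim.
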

\begin{proof}[Sketch of Proof]
Intuitively, the result is deduced by evaluating the continuous time result of Theorem \ref{thm:acc_nest_zero_v_xi} at $t = T_k$. This will me made rigorous by using an optional stopping theorem. Then, it remains to use Proposition \ref{prop:disc}, which states that \eqref{alg:cont} evaluates the continuous time process \eqref{eq:nest_continuizedv2} at $t=T_k$.
\end{proof}
By definition $T_k$ follows the Gamma distribution  $\Gamma(k,1)$, such that $\E{T_k} = k$, which could lead to the approximation $\E{ T_k^2(f(\xalg{k})-f^\ast)} \approx k^2\E{f(\xalg{k})-f^\ast}$. Concentration inequalities allow us to formally apply such an argument with high probability. Then, we can adjust $k$, $\alpha$ and $m$ to achieve a given precision $\varepsilon > 0$, as stated below.

\begin{theorem}\label{thm:concentration_with_Tk}
Suppose that the same assumptions and the choice of parameters as in Theorem \ref{lem:acc_nest_zero_v_xi_with_Tk} hold. Moreover, assume that for a given accuracy $\varepsilon > 0$ with some constants $\delta_0 >1$ and $\delta_1 \in (0,1]$, we fix
\begin{align*}
     m(\varepsilon)&= \ceil{\dfrac{80\sigma^2(\rho+1)}{\tau}\left(\dfrac{\delta_0^3\kappa_d V(x_0,x^\ast)}{\delta_1^6\varepsilon^3L}\right)^{1/2}},\\
      \alpha(\varepsilon) &= \dfrac{1}{3}\left(\dfrac{\varepsilon^3\tau^4\delta_1^6}{(d+17)^2(\rho+1)^2\delta_0^3L^3}\right)^{1/4}\Big(\kappa_d V(x_0,x^\ast)\Big)^{-1/4}.
\end{align*}
Then, as long as
\begin{equation*}
    k(\varepsilon)= \ceil{\dfrac{40d(\rho+1)}{\delta_1\tau}\left(\dfrac{\delta_0\kappa_d LV(x_0,x^\ast)}{\varepsilon}\right)^{1/2}},
\end{equation*}
we ensure that with probability at least $1-\frac{1}{\delta_0}-e^{-(\delta_1-1-\log(\delta_1))k(\varepsilon)}$
\begin{equation*}
    f\left(\xalgt{k(\varepsilon)}\right)-f^\ast \leq \varepsilon.
\end{equation*}
\end{theorem}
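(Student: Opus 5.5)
We start from Theorem \ref{lem:acc_nest_zero_v_xi_with_Tk}, which controls $\E{T_k^2(f(\xalgt{k})-f^\ast)}$, and turn it into a high-probability bound on $f(\xalgt{k})-f^\ast$ using two events. The first comes from Markov's inequality applied to the nonnegative variable $T_k^2(f(\xalgt{k})-f^\ast)$. Writing $B_k$ for the right-hand side of Theorem \ref{lem:acc_nest_zero_v_xi_with_Tk}, we get $T_k^2(f(\xalgt{k})-f^\ast)\le \delta_0 B_k$ with probability at least $1-\frac{1}{\delta_0}$. The second is a lower-tail concentration bound for $T_k\sim\Gamma(k,1)$. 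The Chernoff bound for a sum of $k$ i.i.d. $\mathcal{E}(1)$ variables gives
\[
\P(T_k\le \delta_1 k)\le \exp\bigl(-k(\delta_1-1-\log\delta_1)\bigr)
\]
for $\delta_1\in(0,1]$, by optimizing $\E{e^{-\lambda T_k}}=(1+\lambda)^{-k}$ over $\lambda\ge0$. A union bound then gives that, with the stated probability, $f(\xalgt{k})-f^\ast\le \delta_0 B_k/(\delta_1^2k^2)$.

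It remains to show that $\delta_0 B_k/(\delta_1^2k^2)\le\varepsilon$ for the prescribed $k(\varepsilon)$, $m(\varepsilon)$ and $\alpha(\varepsilon)$. We do this by splitting $B_k$ into its three terms and requiring each to contribute at most $\varepsilon/3$.

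\textbf{First term.} We need
\[
\frac{384\,d^2\kappa_d L\,\delta_0}{\tau^2\delta_1^2k^2}\Bigl(\frac{\rho+m}{m}\Bigr)^2V(x_0,x^\ast)\le \frac{\varepsilon}{3}.
\]
Since $m\ge1$, we have $\frac{\rho+m}{m}\le \rho+1$. The condition then reduces to $k\ge \frac{c\,d(\rho+1)}{\delta_1\tau}\bigl(\delta_0\kappa_d LV/\varepsilon\bigr)^{1/2}$ with $c=\sqrt{1152}\approx 34<40$. This is what $k(\varepsilon)$ is chosen for.

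\textbf{Second term.} The term $\frac{\delta_0\alpha^2 Ld(\tau+2)}{2\tau(d+2)}\cdot\frac{k(k+1)}{\delta_1^2k^2}$ is bounded using $k(k+1)\le 2k^2$ and $d/(d+2)\le1$. What remains is a condition of the form $\alpha^2\lesssim \varepsilon\tau\delta_1^2/(\delta_0 L)$. We check that $\alpha(\varepsilon)^2$ satisfies it. This requires comparing $\varepsilon^{3/2}(\cdots)^{-1/2}$ with $\varepsilon$, which presumably uses the regime where $\varepsilon\lesssim L\kappa_dV$ together with the constant $1/9$.

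\textbf{Third term.} Here $k(k+1)(k+2)/k^2\le 6k$, and this is the main obstacle: the term grows linearly in $k$, so $\alpha$ and $1/m$ must shrink like $\varepsilon/k\sim\varepsilon^{3/2}$. We substitute $k(\varepsilon)$ and need an upper bound on it. We use $\lceil x\rceil\le 2x$, valid when $x\ge1$ (the degenerate case being handled separately). We then verify that $\frac{\delta_0}{\delta_1^2}\cdot 6k\cdot\frac{L\alpha^2(d+17)}{192d}\le\varepsilon/6$ after inserting $\alpha(\varepsilon)^2\propto\varepsilon^{3/2}\tau^2\delta_1^3/\bigl((d+17)(\rho+1)\delta_0^{3/2}L^{3/2}(\kappa_dV)^{1/2}\bigr)$: the powers of $d$, $\rho+1$, $\tau$, $\delta_0$ and $\delta_1$ cancel, leaving a numerical constant. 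Similarly, $\frac{\delta_0}{\delta_1^2}\cdot 6k\cdot\frac{\sigma^2}{12dLm}\le\varepsilon/6$ follows from $m(\varepsilon)\ge\frac{80\sigma^2(\rho+1)}{\tau}\bigl(\delta_0^3\kappa_dV/(\delta_1^6\varepsilon^3L)\bigr)^{1/2}$, since the $d$ factors cancel and the exponents of $\varepsilon$, $\delta_0$ and $\delta_1$ match. We add the three bounds to conclude.

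The careful bookkeeping of constants in this third term, and checking that the stated numerical factors (40, 80, 1/3) suffice, is where we expect most of the effort to go.
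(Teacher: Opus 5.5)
Your skeleton (Markov's inequality on $T_k^2(f(\xalgt{k})-f^\ast)$, the Chernoff lower tail of $T_k$, a union bound, then tuning term by term) is exactly the paper's. The closing bookkeeping, however, does not work with the stated constants. Write $x$ for the quantity inside the ceiling defining $k(\varepsilon)$, so $k=\lceil x\rceil$. Substituting the prescribed $m(\varepsilon)$ into your $\sigma^2$ piece $\frac{\delta_0}{\delta_1^2}\cdot 6k\cdot\frac{\sigma^2}{12dLm}$ gives the bound $\frac{k}{4x}\varepsilon$, attained when no rounding occurs in $m(\varepsilon)$. This is at least $\varepsilon/4$, so your claimed $\le\varepsilon/6$ is false. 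The constant $80$ is calibrated for the paper's budget: $\varepsilon/4$ for the $V(x_0,x^\ast)$ term, $\varepsilon/4$ for the $\sigma^2$ term, and $\varepsilon/2$ for all $\alpha^2$ terms. Alternatively, avoid the factor $6$ by using $(k+1)(k+2)/k\le k+3+2/k$. In the same way, the linear $\alpha^2$ piece equals $\frac{5\tau}{36}\cdot\frac{k}{x}\varepsilon$. Every $k$-dependent piece carries the factor $k/x$, so the estimate $\lceil x\rceil\le 2x$ is unaffordable; it alone pushes the $\sigma^2$ piece to $\varepsilon/2$. What works is $k\le x+1$ with $x$ large.

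The $k$-independent term cannot be left at ``presumably''. With the prescribed $\alpha(\varepsilon)$ it equals $\frac{\delta_0\alpha^2Ld(\tau+2)}{\delta_1^2\tau(d+2)}=\frac{40d^2(\tau+2)}{9(d+2)(d+17)}\cdot\frac{\varepsilon}{x}\le\frac{40}{3x}\varepsilon$. This is small only for $x$ large, e.g.\ $x\ge 40$, i.e.\ $\varepsilon\le\frac{d^2(\rho+1)^2\delta_0\kappa_dLV(x_0,x^\ast)}{\delta_1^2\tau^2}$. That is a hypothesis absent from the statement, though your $\varepsilon\lesssim L\kappa_dV$ implies it. Under it, the four pieces sum to at most $\bpar{\frac14+\frac{41}{160}+\frac{5\cdot 41}{36\cdot 40}+\frac13}\varepsilon<\varepsilon$.

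The paper never states such a condition. It absorbs this term into the linear one via $k\ge 1$, then substitutes the unrounded expression for $k$ into the requirements on $m$ and $\alpha$; that is, it tacitly takes $k(\varepsilon)=x$. Moreover, its chain $\frac{97\delta_0L\alpha^2(d+17)}{32\delta_1^2\tau d}k\le\frac{\varepsilon}{2}$ would force the constant $\sqrt{16/3880}\approx 0.064$, not $\frac13$, in $\alpha(\varepsilon)$. So keeping the term separate, as you do, is the better route.

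Your ``degenerate case handled separately'' cannot be carried out, because the claim itself fails for very large $\varepsilon$. Take $f(\cdot,\xi)=\frac{L}{2}\norm{\cdot}^2$ (so $\tau=\rho=1$, any $\sigma^2>0$), the Euclidean prox, and $z_0=x_0\neq 0$. For large $\varepsilon$ one has $k(\varepsilon)=m(\varepsilon)=1$, $\gamma=\frac{1}{4dL}$, and $\xalg{1}=x_0-\frac14\dotprod{x_0,v_0}v_0-\frac{\alpha}{8}v_0$. Hence $f(\xalg{1})-f^\ast\ge\frac{L}{2}\bpar{\frac{\alpha}{8}-\norm{x_0}}^2>\varepsilon$ for every $v_0$, since $\alpha(\varepsilon)^2\propto\varepsilon^{3/2}$. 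Yet the claimed probability is positive for, e.g., $\delta_0=10$ and $\delta_1=\frac{1}{10}$. An explicit upper restriction on $\varepsilon$ must therefore be added.
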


Our result states that with proper choice of bath size and accuracy parameters, we achieve a precision $\varepsilon$ in $\bigO\bpar{\frac{d \sqrt{L} \norm{\xalg{0}-x^\ast}}{ \tau \sqrt{\varepsilon}}}$ iterations in the Euclidean setting, and in $\bigO\bpar{\frac{\sqrt{d} \sqrt{L V(x_0,x^\ast)}}{ \tau \sqrt{\varepsilon}}}$ iterations in the $1$-norm prox setup (Remark~\ref{rem:prox}), see a summary in Table~\ref{table:summary_tab3}. In the $1$-norm prox setup, it is expected that $V(x_0,x^\ast) \approx \norm{x_0-x^\ast}^2$ if $x_0-x^\ast$ is a sparse vector. In such case, the dependence on $d$ is significantly improved.
\paragraph{Comparison with Farzin et al. \cite{farzin2025minimisation}} The only existing convergence result of a zeroth-order algorithm in the smooth quasar-convex setting we are aware of is \cite{farzin2025minimisation}, which assumes exact evaluations of the functions, namely Assumption \ref{ass:sgc} with $\rho = 1$ and $\sigma = 0$, and is based on a Euclidean gradient step. Their complexity is $\bigO\bpar{\frac{dL \norm{\xalg{0}-x^\ast}^2}{ \tau \varepsilon}}$, while ours is $\bigO\bpar{\frac{d \sqrt{L} \norm{\xalg{0}-x^\ast}}{ \tau \sqrt{\varepsilon}}}$. This is in the spirit of the classical kind of improvement we obtain with Nesterov's momentum in convex first-order optimization \cite{nesterov2004introductory}, up to the $d$ factor, that comes from the zeroth-order information and is unavoidable \cite{duchi2015optimal}, and the $\tau^{-1}$ factor, which comes from quasar-convexity and is unavoidable \cite{hinder2020near}. However, we note that this improved complexity when using momentum comes at the cost of a more restricted bound on the choice of $\alpha$, which is consistent with previous works \cite{gorbunov2019stochastic,nesterov2017random}.

Finally, we highlight that our result may be seen as a generalization of \cite{gorbunov2019stochastic} in two directions: we relax convexity with $\tau$-quasar convexity, and we allow $\rho>1$ in Assumption \ref{ass:sgc}. Our bounds are essentially similar up to the factor $\tau$
However, we note that, in contrast, Gorbunov et al.\cite{gorbunov2019stochastic} allows for a supplementary, potentially adversarial noise in the evaluations of $f$, which we did not consider. Following our discussion in the introduction, we recall that it is unclear whether our result is achievable with the more classical approaches of subspace-search procedures \cite{hinder2020near}. 
\EG{
\begin{table}[h!] 
\centering
\begin{tabular}{|c |c | c|} 
 \hline
  & $p=1$ & $p=2$ \\  
 \hline\hline 
  $k(\varepsilon)$  & $\dfrac{(\rho+1)}{\delta_1\tau} \sqrt{\dfrac{d\log(d)\delta_0L\Theta_1}{\varepsilon}}$ & $\dfrac{(\rho+1)}{\delta_1\tau} \sqrt{\dfrac{d^2\delta_0L\Theta_2}{\varepsilon}}$  \\[1ex]  \hline
 $m(\varepsilon)$  & $\dfrac{\sigma^2(\rho+1)}{\delta_1^3\tau}\sqrt{\dfrac{\log(d)\delta_0^3\Theta_1}{\varepsilon^3dL}}$ & $\dfrac{\sigma^2(\rho+1)}{\delta_1^3\tau}\sqrt{\dfrac{\delta_0^3\Theta_2}{\varepsilon^3L}}$  \\[1ex] \hline
 $\alpha(\varepsilon)$  & $\dfrac{\tau}{\sqrt{\rho+1}}\left(\dfrac{\varepsilon^3\delta_1^6 d}{\delta_0^3L^3(d+17)^2\log(d)\Theta_1}\right)^{1/4}$ & $\dfrac{\tau}{\sqrt{\rho+1}}\left(\dfrac{\varepsilon^3 \delta_1^6}{\delta_0^3L^3(d+17)^2\Theta_2}\right)^{1/4}$  \\ [2ex] 
 \hline \hline
\end{tabular}
\caption{Summary of the values for $k(\varepsilon)$, $m(\varepsilon)$ and $\alpha(\varepsilon)$ which ensures that outputs $\xalg{k}$ of our discrete Nesterov momentum algorithm \eqref{alg:cont}
satisfies $f(\xalg{k(\varepsilon)})-f^\ast \leq \varepsilon$ with probability $1-\delta_0^{-1}-e^{-(\delta_1-1-\log(\delta_1))k(\varepsilon)}$ for the special cases $p = 1$ and $p = 2$. The integer $p$ is the one that makes the Bregman divergence 1-strongly convex with respect to $\normp{\cdot}$. For each case, we suppose that there exists positive constant $\Theta_p$ such that $V(x_0,x^\ast)\leq \Theta_p$. Numerical constants or/and rounding operations for $k(\varepsilon)$, $m(\varepsilon)$ and $\alpha(\varepsilon)$ are
omitted for simplicity.}
\label{table:summary_tab3}
\end{table}
}

\section{Proofs of Convergence}\label{sec:proof_main_results}
In this part, we provide the proofs of our main convergence theorems. Section \ref{app:proof_acc_nest_zero_v_xi} is devoted to the continuous-time result, while Section \ref{sec:proof_main_results_disc} is devoted to the discrete-time result.
\subsection{Continuous-Time: Proof of Theorem \ref{thm:acc_nest_zero_v_xi}}\label{app:proof_acc_nest_zero_v_xi}
Let us denote by $\overline{x}_t= (t,x_t,z_t)$. Therefore, it follows from \eqref{eq:nest_continuizedv2} that
\begin{equation*}
    \text{d}\overline{x}_t = \zeta(\overline{x}_t)\text{d}t + \int_{\sphere \times \Xi^m} G\Big(\overline{x}_t,v,\xiest\Big)\text{d}N\Big(t,v,\xiest\Big),
\end{equation*}
where
\begin{equation}\label{eq:proof_acc_zero_v_xi_eq1a}
    \zeta(\overline{x}_t) = \begin{pmatrix}
    1 \\ \eta_t(z_t-x_t) \\ 0
    \end{pmatrix},\quad G\Big(\overline{x}_t,v,\xiest\Big) = \begin{pmatrix}
    0 \\ -\gamma_t g_{\alpha}\Big(x_t,v,\xiest\Big) \\ \prox{\gamma'_t\gest\Big(x_{t},\vest,\xiest\Big)}{z_t}-z_t
    \end{pmatrix},
\end{equation}
and we recall from \eqref{eq:oracle_g_v_xi} that
\begin{equation*}
    \gest\left(x,v,\xi^{\{m\}}\right) = \frac{1}{m}\sum_{i=1}^m \frac{d}{\alpha}\Big(f(x+\alpha v,\xi_i)-f(x,\xi_i)\Big)v.
\end{equation*} 
Let $\varphi$ and $\varphi_0$ defined as
\begin{equation}\label{eq:proof_acc_zero_v_xi_eq1}
    \varphi(t,x,z) =  a_t(f_\alpha(x)-f^\ast) + V(z,x^\ast)\quad  \text{ and } \quad \varphi_0(t,x,z) =  a_t(f_\alpha(x)-f^\ast)
\end{equation}
where $V$ 
is a Bregman divergence given in \eqref{eq:def_bregman_div} and such that
\begin{equation}
    V(z,x^\ast)=\proxf(x^\ast)-\proxf(z)-\dotprod{\nabla \proxf (z),x^\ast-z}.
\end{equation}
By applying Proposition \ref{prop:sto_calc_abridged_with_jump}, we obtain
\begin{equation}\label{eq:proof_acc_zero_v_xi_eq2}
    \begin{aligned}
        &\varphi(\overline{x}_t)-\varphi(\overline{x}_0)\\
        &=\int_{0}^t \dotprod{\nabla \varphi_0(\overline{x}_s),\zeta(\overline{x}_s)}\text{d}s + \int_{0}^t \Ecbig{\varphi\Big(\overline{x}_s + G\Big(\overline{x}_s,v,\xiest\Big)\Big) - \varphi(\overline{x}_s)}{v,\xiest}\text{d}s + M_t,
    \end{aligned}
\end{equation}
where $\mart$ is a martingale such that $\E{M_t} = 0$ for all $t\in \R_+$. 
We compute $\nabla \varphi_0$:
\begin{equation}\label{eq:proof_acc_zero_v_xi_eq3}
    \frac{\partial \varphi_0}{\partial s}= \frac{\text{d} a_s}{\text{d} s}(f_\alpha(x)-f^\ast), \quad 
    \frac{\partial \varphi_0}{\partial x} = a_s\nabla f_\alpha(x) \quad \text{and } \quad \frac{\partial \varphi_0}{\partial z}=0.
\end{equation}
It follows from \eqref{eq:proof_acc_zero_v_xi_eq1a} and \eqref{eq:proof_acc_zero_v_xi_eq3} that
\begin{equation}\label{eq:proof_acc_zero_v_xi_eq4}
     \dotprod{\nabla \varphi_0(\overline{x}_s),\zeta(\overline{x}_s)}=\frac{\text{d} a_s}{\text{d}s}(f_\alpha(x_s)-f^\ast)+a_s \eta_s \dotprod{ \nabla f_\alpha(x_s) ,z_s-x_s}.
\end{equation}
Then, 
\begin{equation}\label{eq:proof_acc_zero_v_xi_eq4b1}
    \begin{aligned}
    \varphi\Big(\overline{x}_s + G\Big(\overline{x}_s,v,\xiest\Big)\Big) &= \varphi(\overline{x}_s)+a_s \Big[f_\alpha\Big(x_s - \gamma_s g_{\alpha}\Big(x_s,v,\xiest\Big)\Big) - f_\alpha(x_s)\Big]\\
    &+ V\Big(\prox{\gamma'_s\gest\Big(x_{s},\vest,\xiest\Big)}{z_s},x^\ast \Big)- V(z_s,x^\ast ).
\end{aligned}
\end{equation}
Moreover, the following inequality holds
\begin{equation}
    \begin{aligned}\label{eq:proof_acc_zero_v_xi_eq4b2}
        V\Big(\prox{\gamma'_s\gest\Big(x_s,\vest,\xiest\Big)}{z_s},x^\ast \Big) 
        &\leq V(z_s,x^\ast )+ \gamma'_s\dotprod{\gest\Big(x_s,\vest,\xiest\Big),x^\ast-z_s} \\
        &+\dfrac{(\gamma'_s)^2}{2}\normq{\gest\Big(x_s,\vest,\xiest\Big)}^2.
    \end{aligned}
\end{equation}
The proof of this inequality can be found, for example, in \cite[Equation (2.19)]{gorbunov2018accelerated}. 
It relies on a Fenchel-Young inequality and a classical identity of Bregman divergences:
$V(b,a) - V(c,a) = -V(c,b) -\dotprod{\nabla \proxf(b)-\nabla \proxf(c),a-b}$ for all $a,b,c \in \R^d$. 
Consequently, the inequalities \eqref{eq:proof_acc_zero_v_xi_eq4b1} and \eqref{eq:proof_acc_zero_v_xi_eq4b2} lead to 
\begin{align*}
    \varphi\Big(\overline{x}_s + G\Big(\overline{x}_s,v,\xiest\Big)\Big) &\leq \varphi(\overline{x}_s) + a_s \Big[f_\alpha\Big(x_s - \gamma_s g_{\alpha}\Big(x_s,v,\xiest\Big)\Big) - f_\alpha(x_s)\Big] \\
    &+ \gamma'_s\dotprod{\gest\Big(x_s,\vest,\xiest\Big),x^\ast-z_s}
        +\dfrac{(\gamma'_s)^2}{2}\normq{\gest\Big(x_s,\vest,\xiest\Big)}^2.
\end{align*}
Taking the conditional expectation yields
\begin{equation}\label{eq:proof_acc_zero_v_xi_eq5}
    \begin{aligned}
   &\Ecbig{\varphi\Big(\overline{x}_s + G\Big(\overline{x}_s,v,\xiest\Big)\Big)- \varphi(\overline{x}_s)}{v,\xiest}\\
   & \leq a_s\Ecbig{f_\alpha\Big(x_s - \gamma_s g_{\alpha}\Big(x_s,v,\xiest\Big)\Big) - f_\alpha(x_s)}{v,\xiest} \\
   &+\gamma_s'\dotprod{\Ec{\gest\Big(x_s,\vest,\xiest\Big)}{v,\xiest},x^\ast-z_s}+ \dfrac{(\gamma_s')^2}{2}\Ec{\normq{\gest\Big(x_s,\vest,\xiest\Big)}^2}{v,\xiest}. 
\end{aligned}
\end{equation}
Furthermore, the estimate $g_\alpha(\cdot)$ is unbiased, see Lemma \ref{lem:unbiased_prop_g_xi_alpha}, namely
\begin{equation}\label{eq:proof_acc_zero_v_xi_eq6}
    \Ec{\gest\Big(x_s,\vest,\xiest\Big)}{v,\xiest}=\nabla f_{\alpha}(x_s).
\end{equation}
Moreover, from Lemma \ref{lem:descent_lemma_alpha}
\begin{equation}
    \begin{aligned}\label{eq:proof_acc_zero_v_xi_eq7}
        &\Ec{f_\alpha\left(x_s - \gamma_s g_{\alpha}\Big(x_s,v,\xiest\Big)\right)-f_\alpha(x_s)}{v,\xiest} \\
        &\leq \gamma_s\left[\dfrac{2\gamma_s Ld(\rho+m-1)}{m}-1\right]\norm{\nabla f_\alpha(x_s)}^2+\dfrac{\gamma_s^2d\alpha^2L^3}{4m}\Big(dm+8(\rho+m-1)\Big)\\
        &\qquad +\dfrac{\gamma_s^2dL\sigma^2}{m},
    \end{aligned}
\end{equation}
and Lemma \ref{lem:g_sphere_bound_q}
\begin{equation}\label{eq:proof_acc_zero_v_xi_eq8}
    \begin{aligned}
        \Ec{\normq{\gest\left(x_s,\vest,\xiest\right)}^2 }{v,\xiest}
        &\leq \dfrac{24d\kappa_d(\rho+m)}{m}\norm{\nabla f_\alpha(x_s)}^2 + \dfrac{25d\kappa_d\alpha^2L^2(\rho+m)}{m}\\
        &\quad +\dfrac{24d\kappa_d\sigma^2}{m},
    \end{aligned}
\end{equation}
where $\kappa_d =  \min\{q-1,16\log(d)-1 \}d^{\frac{2}{q}-1}$.
Consequently, it follows from the four equations \eqref{eq:proof_acc_zero_v_xi_eq5}, \eqref{eq:proof_acc_zero_v_xi_eq6}, \eqref{eq:proof_acc_zero_v_xi_eq7} and \eqref{eq:proof_acc_zero_v_xi_eq8} that
\begin{equation}
    \begin{aligned}\label{eq:proof_acc_zero_v_xi_eq9}
        &\Ecbig{\varphi\Big(\overline{x}_s + G\Big(\overline{x}_s,v,\xiest\Big)\Big) - \varphi(\overline{x}_s)}{v,\xiest}\\
        &\leq \left[\dfrac{12d\kappa_d(\rho+m)(\gamma'_s)^2}{m}+\dfrac{2\gamma_s^2 a_sLd(\rho+m-1)}{m}-a_s\gamma_s\right]\norm{\nabla f_\alpha(x_s)}^2\\
        &+ \gamma_s'\dotprod{x^\ast-z_s,\nabla f_{\alpha}(x_s)}+\dfrac{\gamma_s^2 a_sd\alpha^2L^3}{4m}\Big[dm+8(\rho+m-1)\Big]\\
        &+\dfrac{25d\kappa_d\alpha^2L^2(\rho+m)(\gamma'_s)^2}{2m}
        + \dfrac{d\sigma^2}{m}\Big[\gamma_s^2 a_sL+12(\gamma'_s)^2\kappa_d\Big]. 
    \end{aligned}
\end{equation}
Then, by putting together the two contributions \eqref{eq:proof_acc_zero_v_xi_eq4} and \eqref{eq:proof_acc_zero_v_xi_eq9}, and writing 
\[\dotprod{\nabla f(x_s),z_s-x_s} = \dotprod{\nabla f(x_s),z_s-x^\ast} + \dotprod{\nabla f(x_s),x^\ast-x_s}, \] 
we deduce that
\begin{equation*}
    \begin{aligned}
        &\Ecbig{\varphi\Big(\overline{x}_s + G\Big(\overline{x}_s,v,\xiest\Big)\Big) - \varphi(\overline{x}_s)}{v,\xiest}+\dotprod{\nabla \varphi_0(\overline{x}_s),\zeta(\overline{x}_s)}\\
        &\leq \frac{\text{d} a_s}{\text{d}s}(f_\alpha(x_s)-f^\ast)+a_s \eta_s \dotprod{ \nabla f_\alpha(x_s) ,x^\ast-x_s}+\Big(a_s\eta_s-\gamma_s'\Big)\dotprod{\nabla f_{\alpha}(x_s),z_s-x^\ast}\\
        &+\left[\dfrac{12d\kappa_d(\rho+m)(\gamma'_s)^2}{m}+\dfrac{2\gamma_s^2 a_sLd(\rho+m-1)}{m}-a_s\gamma_s\right]\norm{\nabla f_\alpha(x_s)}^2\\
        &+\dfrac{\gamma_s^2 a_sd\alpha^2L^3}{4m}\Big[dm+8(\rho+m-1)\Big]+\dfrac{25d\kappa_d\alpha^2L^2(\rho+m)(\gamma'_s)^2}{2m}\\
        &+ \dfrac{d\sigma^2}{m}\Big[\gamma_s^2 a_sL+12(\gamma'_s)^2\kappa_d\Big]. 
    \end{aligned}
\end{equation*}
From Lemma \ref{lem:sphere:quasar_convex}, we have
    \[  \frac{1}{\tau}\dotprod{\nabla f_\alpha(x),x^\ast-x} \le -\tau(f_\alpha(x)-f^\ast) + \dfrac{d \alpha^2L}{d+2}, \]
which implies 
\begin{equation}
    \begin{aligned}\label{eq:proof_acc_zero_v_xi_eq9b1}
        &\Ecbig{\varphi\Big(\overline{x}_s + G\Big(\overline{x}_s,v,\xiest\Big)\Big) - \varphi(\overline{x}_s)}{v,\xiest}+\dotprod{\nabla \varphi_0(\overline{x}_s),\zeta(\overline{x}_s)}\\
        &\leq \Bigg(\frac{\text{d} a_s}{\text{d}s}-\tau a_s\eta_s\Bigg)\Big(f_\alpha(x_s)-f^\ast\Big)+\Big(a_s\eta_s-\gamma_s'\Big)\dotprod{\nabla f_{\alpha}(x_s),z_s-x^\ast}\\
        &+\left[\dfrac{12d\kappa_d(\rho+m)(\gamma'_s)^2}{m}+\dfrac{2\gamma_s^2 a_sLd(\rho+m-1)}{m}-a_s\gamma_s\right]\norm{\nabla f_\alpha(x_s)}^2+K_s.
    \end{aligned}
\end{equation}
where
\begin{equation}
    \begin{aligned}\label{eq:proof_acc_zero_v_xi_eq9b2}
        K_s&= \dfrac{\gamma_s^2 a_sd\alpha^2L^3}{4m}\Big[dm+8(\rho+m-1)\Big]+\dfrac{25d\kappa_d\alpha^2L^2(\rho+m)(\gamma'_s)^2}{2m} +\frac{\alpha^2dL}{d+2} a_s\eta_s\\
        &+ \dfrac{d\sigma^2}{m}\Big(\gamma_s^2 a_sL+12(\gamma'_s)^2\kappa_d\Big).
    \end{aligned}
\end{equation}
\paragraph{Parameter choices} Our goal is now to find parameters such that all the terms in the right hand side of \eqref{eq:proof_acc_zero_v_xi_eq9b1} that depends on the trajectories are reduced to zero. For $s>0$, we fix $a_s = s^2C$, such that $\frac{\text{d}a_s}{\text{d}s} = 2sC$, for some constant $C>0$ to be fixed later. We then have
\begin{equation}\label{eq:proof_acc_zero_v_xi_eq10}
    \frac{\text{d}a_s}{\text{d}s} - \tau a_s \eta_s = 0 \Leftrightarrow  \eta_s  = \frac{1}{a_s \tau}\frac{\text{d}a_s}{\text{d}s} = \frac{2}{s \tau}.
\end{equation}
Therefore, we also obtain
\begin{equation}\label{eq:proof_acc_zero_v_xi_eq11}
    a_s \eta_s - \gamma'_s = 0 \Leftrightarrow \gamma'_s = a_s \eta_s = \frac{2Cs}{\tau}.
\end{equation}
Next, we want to have
\begin{equation}\label{eq:proof_acc_zero_v_xi_eq12}
   \dfrac{12d\kappa_d(\rho+m)(\gamma'_s)^2}{m}+\dfrac{2\gamma_s^2 a_sLd(\rho+m-1)}{m}-a_s\gamma_s = 0.
\end{equation}
For any $s> 0$, we find that the quantity $ \frac{2\gamma_s^2 a_sLd(\rho+m-1)}{m}-a_s\gamma_s$ has the minimal value $-\frac{a_s m}{8dL(\rho+m-1)}$ reached at $\gamma_s = \frac{m}{4dL(\rho+m-1)}:=\gamma$, which does not depend on $s$.
Hence, by plugging this choice with $a_s = s^2C$ and Equation \eqref{eq:proof_acc_zero_v_xi_eq11} into \eqref{eq:proof_acc_zero_v_xi_eq12}, we get
\begin{equation}\label{eq:proof_acc_zero_v_xi_eq13}
    -\frac{s^2 Cm}{8dL(\rho+m-1)} + \dfrac{48d\kappa_d(\rho+m)C^2s^2}{m\tau^2}= 0 \Leftrightarrow C = \frac{\tau^2m^2}{384d^2L\kappa_d(\rho+m)(\rho+m-1)}.
\end{equation}
Then, with the previous value of the constant $C$ and the choices 
\begin{equation}\label{eq:proof_acc_zero_v_xi_eq14}
    a_s = Cs^2,~  \eta_s = \frac{2}{s \tau},\, \gamma'_s = \frac{2C s}{\tau}, \, \gamma_s = \frac{m}{4dL(\rho+m-1)},
\end{equation}
we deduce that \eqref{eq:proof_acc_zero_v_xi_eq9b1} and \eqref{eq:proof_acc_zero_v_xi_eq9b2} yield
\begin{equation}\label{eq:proof_acc_zero_v_xi_eq15}
    \Ecbig{\varphi\Big(\overline{x}_s + G\Big(\overline{x}_s,v,\xiest\Big)\Big) - \varphi(\overline{x}_s)}{v,\xiest}+\dotprod{\nabla \varphi(\overline{x}_s),\zeta(\overline{x}_s)}
    \leq K'_s,
\end{equation}
and 
\begin{equation}\label{eq:proof_acc_zero_v_xi_eq16}
    K'_s=\dfrac{6d\kappa_d\alpha^2L^2C^2(\rho+m)(d+17)}{\tau^2m}s^2+\dfrac{96d\kappa_dC^2\sigma^2}{\tau^2m}s^2+\dfrac{2\alpha^2LdC}{\tau(d+2)}s.
\end{equation}

\paragraph{Conclusion} As a consequence, it follows from \eqref{eq:proof_acc_zero_v_xi_eq2} that
\begin{equation}
    \begin{aligned}\label{eq:proof_acc_zero_v_xi_eq17}
        \varphi(\overline{x}_t) &\leq \varphi(\overline{x}_0) + \int_{0}^t K'_s \,\text{d}s + M_t\\
        &=\varphi(\overline{x}_0) +\dfrac{2d\kappa_d\alpha^2L^2C^2(\rho+m)(d+17)}{\tau^2m}t^3+\dfrac{32d\kappa_dC^2\sigma^2}{\tau^2m}t^3+\dfrac{\alpha^2LdC}{\tau(d+2)}t^2 + M_t.
    \end{aligned}
\end{equation}
Recalling the definition of the Lyapunov function \eqref{eq:proof_acc_zero_v_xi_eq1} and the choice of $a_t$ in \eqref{eq:proof_acc_zero_v_xi_eq14}, we deduce from \eqref{eq:proof_acc_zero_v_xi_eq17} that
\begin{equation}\label{eq:proof_acc_zero_v_xi_eq18}
    \begin{aligned}
        &t^2(f_\alpha(x_t)-f^\ast) \\
        &\leq \dfrac{1}{C}V(x_0,x^\ast)
    +\dfrac{2d\kappa_d\alpha^2L^2C(\rho+m)(d+17)}{\tau^2m}t^3+\dfrac{32d\kappa_dC\sigma^2}{\tau^2m}t^3+\dfrac{\alpha^2Ld}{\tau(d+2)}t^2 + \dfrac{1}{C}M_t
    \end{aligned}
\end{equation}
The equation \eqref{eq:proof_acc_zero_v_xi_eq18} bounds $f_\alpha$, while we want to bound $f$.
Therefore, we use Lemma \ref{lem:function_lipschitz_transfer} in order to transfer the previous bound to the function $f$, which yields
\begin{equation}\label{eq:proof_acc_zero_v_xi_eq18b}
    \begin{aligned}
        t^2(f(x_t)-f^\ast)
        &\leq \dfrac{1}{C}V(x_0,x^\ast)
    +\dfrac{2d\kappa_d\alpha^2L^2C(\rho+m)(d+17)}{\tau^2m}t^3+\dfrac{32d\kappa_dC\sigma^2}{\tau^2m}t^3\\
    &+\dfrac{\alpha^2Ld(\tau+2)}{2\tau(d+2)}t^2 + \dfrac{1}{C}M_t.
    \end{aligned}
\end{equation}
Replacing the value of $C$ given in \eqref{eq:proof_acc_zero_v_xi_eq13} and using the fact that $\rho\geq 1$, we obtain
\begin{equation}\label{eq:proof_acc_zero_v_xi_eq19}
    \begin{aligned}
        &t^2(f(x_t)-f^\ast)\\
        &\leq \dfrac{384d^2\kappa_dL}{\tau^2}\left(\dfrac{\rho+m}{m}\right)^2V(x_0,x^\ast)
    +\dfrac{L\alpha^2(d+17)}{192d}t^3+\dfrac{\sigma^2}{12dLm}t^3+\dfrac{\alpha^2Ld(\tau+2)}{2\tau(d+2)}t^2 \\
    &+ \frac{384d^2L\kappa_d(\rho+m)(\rho+m-1)}{\tau^2m^2}M_t.
    \end{aligned}
\end{equation}
Since $M_t$ is a martingale, we have that $\E{M_t} = 0$ for all $t \in \R_+$. Finally, by taking the expectation on both sides of \eqref{eq:proof_acc_zero_v_xi_eq19}, we conclude that
\begin{equation}\label{eq:proof_acc_zero_v_xi_eq20}
    \begin{aligned}
        &\E{f(x_t)-f^\ast}\\
        &\leq \dfrac{384d^2\kappa_dL}{\tau^2t^2}\left(\dfrac{\rho+m}{m}\right)^2V(x_0,x^\ast)
    +\left[\dfrac{L\alpha^2(d+17)}{192d}+\dfrac{\sigma^2}{12dLm}\right]t+\dfrac{\alpha^2Ld(\tau+2)}{2\tau(d+2)}.
    \end{aligned}
\end{equation}
which achieves the proof of Theorem \ref{thm:acc_nest_zero_v_xi}.

\subsection{Discrete-Time Results}\label{sec:proof_main_results_disc}
The fundamental tool to transfer the result from continuous-time to discrete-time is the following.
    \begin{theorem}[Stopping theorem]\label{thm:martingal_stopping}
    Let $(\varphi_t)_{t\in \R_+}$ be a non-negative process with càdlàg trajectories, such that it verifies
    $$  \varphi_t \le K_0 + U_t+ M_t, $$
    for some positive random variable $K_0$, some deterministic function $(U_t)_{t \ge 0}$, and some martingale $\mart$ with $\E{M_0} = 0$. Then, for any almost surely finite stopping time $\tau$, one has 
$$ \E{\varphi_\tau} \le \E{K_0} + \E{U_\tau}.$$
\end{theorem}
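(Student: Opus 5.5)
The plan is to localize with bounded stopping times, apply optional stopping to the martingale part, and then let the truncation level go to infinity. Fix $n\in\N$ and set $\tau_n:=\tau\wedge n$, which is a bounded stopping time. We take the càdlàg version of $\mart$; this is consistent with the càdlàg trajectories of $(\varphi_t)$ and is the version produced by the Itô formula of Proposition \ref{prop:sto_calc_abridged_with_jump}. Doob's optional stopping theorem for bounded stopping times then gives $\E{M_{\tau_n}}=\E{M_0}=0$. Evaluating the pathwise inequality at $t=\tau_n$ and taking expectations yields
\[
\E{\varphi_{\tau_n}}\le \E{K_0}+\E{U_{\tau_n}}.
\]

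Next I would pass to the limit $n\to\infty$. Since $\tau<\infty$ almost surely, $\tau_n=\tau$ for all $n$ large enough (depending on $\omega$), so $\varphi_{\tau_n}\to\varphi_\tau$ almost surely. Because $\varphi\ge 0$, Fatou's lemma gives $\E{\varphi_\tau}\le\liminf_n \E{\varphi_{\tau_n}}$. It therefore suffices to show
\[
\limsup_{n} \E{U_{\tau_n}}\le \E{U_\tau}.
\]
To this end I would split
\[
\E{U_{\tau_n}}=\E{U_\tau\1_{\{\tau\le n\}}}+U_n\,\P(\tau>n).
\]
For the first term, note that $U$ is bounded below. Taking expectations in the hypothesis at a deterministic time $t$ and using $\varphi_t\ge0$ together with $\E{M_t}=0$ gives $U_t\ge -\E{K_0}$. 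Hence $U_\tau+\E{K_0}\ge 0$. If $\E{U_\tau}=+\infty$ the claim is trivial. Otherwise, monotone convergence applied to $(U_\tau+\E{K_0})\1_{\{\tau\le n\}}$ shows that $\E{U_\tau\1_{\{\tau\le n\}}}\to\E{U_\tau}$.

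The main obstacle is the tail term $U_n\P(\tau>n)$. Its lower bound $-\E{K_0}\P(\tau>n)\to0$ is free, but its upper bound is not, since $U$ is not assumed monotone. My first attempt would be to compare it with the first term. When $U$ is non-decreasing on $[n,\infty)$, one has $U_n\P(\tau>n)\le \E{U_\tau\1_{\{\tau>n\}}}$, and this tends to $0$ by integrability of $U_\tau+\E{K_0}$. Without monotonicity, I would instead use that $\E{U_\tau}<\infty$ forces $\liminf_n U_n\P(\tau>n)\le 0$ along a suitable subsequence $n_j$. The idea is to argue by contradiction: if $U_n\P(\tau>n)\ge c>0$ for all large $n$, one tries to deduce that $\E{U_\tau\1_{\{\tau>n\}}}$ cannot vanish. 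This contradiction step is precisely where the argument is fragile, because the deduction only goes through cleanly when $U$ is comparable to its values on $(n,\infty)$. Once such a subsequence is available, running the Fatou argument along $n_j$ concludes the proof.

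As a fallback, and for the case actually used in the sequel, I would verify the tail condition directly. There $U_t$ is a polynomial in $t$ and $\tau=T_k$ is $\Gamma(k,1)$-distributed, whose exponential tail gives $U_n\P(T_k>n)\to0$. This completes the limit and yields $\E{\varphi_\tau}\le\E{K_0}+\E{U_\tau}$.
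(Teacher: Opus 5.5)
Your reduction matches the paper's: stop at $\tau\wedge t$, use $\E{M_{\tau\wedge t}}=0$, apply Fatou from below, then control $\E{U_{\tau\wedge t}}$. You also correctly identify the only real difficulty, the tail term $U_t\P(\tau>t)$. However, you leave that term unresolved for the theorem as stated.

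The route you sketch cannot work with integer truncation levels. Since $U$ is an arbitrary deterministic function, the implication $\E{U_\tau}<\infty\Rightarrow\liminf_n U_n\P(\tau>n)\le 0$ is false along integers. Take $\tau\sim\mathcal{E}(1)$ with $U_t=e^t$ for $t\in\N$ and $U_t=0$ otherwise. Then $U_\tau=0$ almost surely, while $U_n\P(\tau>n)=1$ for every $n$. As written, your proposal therefore proves only the non-decreasing case and the polynomial/Gamma case used in Theorem \ref{lem:acc_nest_zero_v_xi_with_Tk}.

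The missing idea is to let the truncation level range over all real $t$. Then show that $\liminf_{t\to\infty}U_tS(t)\le 0$, where $S(t):=\P(\tau>t)$. Running Fatou along a sequence $t_j\to\infty$ that realizes this $\liminf$ then concludes.

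To prove the claim, suppose $U_tS(t)\ge c>0$ for all real $t\ge t_0$. Then $S>0$ everywhere, and $U_\tau\ge c/S(\tau)$ on $\{\tau>t_0\}$. Set $u_0=t_0$ and $u_k=\inf\{t: S(t)\le S(u_{k-1})/2\}$.
\begin{itemize}
\item Right-continuity of $S$ gives $u_k>u_{k-1}$ and $\P(u_{k-1}<\tau\le u_k)\ge S(u_{k-1})/2$.
\item On that event, $U_\tau\ge c/S(u_{k-1})$.
\item Hence each block contributes at least $c/2$. Since $U\ge-\E{K_0}$, this forces $\E{U_\tau}=+\infty$.
\end{itemize}
This closes your argument in full generality.

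The paper's own proof does not handle this point either. Its step \eqref{eq:stopping_thm_1}, $\limsup_t\E{U_{t\wedge\tau}}\le\E{\limsup_t U_{t\wedge\tau}}$, is a reverse Fatou inequality. It needs an integrable upper bound on $U_{t\wedge\tau}$, which is available when $U$ is non-decreasing with $\E{U_\tau}<\infty$, as in the application. It fails in the example above, taking $\varphi\equiv0$, $M\equiv0$ and $K_0=1$. Your fallback thus covers exactly what the paper's argument rigorously establishes, and the fix above gives the statement as written.
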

The proof of Theorem \ref{thm:martingal_stopping} can be found in Section \ref{app:stopping_theorem}. It will be used to prove Theorem \ref{lem:acc_nest_zero_v_xi_with_Tk}.
\subsubsection{Proof of Theorem \ref{lem:acc_nest_zero_v_xi_with_Tk}}
\begin{proof}
    We recall \eqref{eq:proof_acc_zero_v_xi_eq19} from the proof of Theorem \ref{thm:acc_nest_zero_v_xi}    \begin{equation*}
    \begin{aligned}
        t^2(f(x_t)-f^\ast)
        &\leq \dfrac{384d^2\kappa_dL}{\tau^2}\left(\dfrac{\rho+m}{m}\right)^2V(x_0,x^\ast)
    +\dfrac{L\alpha^2(d+17)}{192d}t^3+\dfrac{\sigma^2}{12dLm}t^3\\
    &+\dfrac{\alpha^2Ld(\tau+2)}{2\tau(d+2)}t^2 +M_t, 
    \end{aligned}
\end{equation*}
for some martingale $(M_t)$ that satisfies $\E{M_t} = 0$.
    Then, applying Theorem \ref{thm:martingal_stopping} with the almost surely finite time $T_k$ as stopping time together with Proposition \ref{prop:disc}, 
    yields
\begin{equation}
\begin{aligned}\label{eq:lem_acc_nest_zero_v_xi_with_Tkeq1}
        \E{ T_k^2(f(\xalgt{k})-f^\ast)} 
        &\leq \dfrac{384d^2\kappa_dL}{\tau^2}\left(\dfrac{\rho+m}{m}\right)^2V(x_0,x^\ast)+\dfrac{\alpha^2Ld(\tau+2)}{2\tau(d+2)}\E{T_k^2}\\
    &+\left[\dfrac{L\alpha^2(d+17)}{192d}+\dfrac{\sigma^2}{12dLm}\right]\E{T_k^3}.
    \end{aligned}
\end{equation}
The moments of a random variable $T_k$ following the Gamma distribution $\Gamma(k,1) $ are well known, namely $\E{T_k^p} = \Pi_{i=1}^p (k+i-1)$, for all $p \in \N^\ast$. Hence, we have that
\begin{equation}\label{eq:lem_acc_nest_zero_v_xi_with_Tkeq2}
        \E{T_k^2} = k(k+1)\qquad  \text{and} \qquad \E{T_k^3} = k(k+1)(k+2).
\end{equation}
Consequently, we conclude from \eqref{eq:lem_acc_nest_zero_v_xi_with_Tkeq1} and \eqref{eq:lem_acc_nest_zero_v_xi_with_Tkeq2} that
    \begin{equation}
    \begin{aligned}
        \E{ T_k^2(f(\xalgt{k})-f^\ast)} 
        &\leq \dfrac{384d^2\kappa_dL}{\tau^2}\left(\dfrac{\rho+m}{m}\right)^2V(x_0,x^\ast)+\dfrac{\alpha^2Ld(\tau+2)}{2\tau(d+2)}k(k+1)\\
    &\qquad+\left[\dfrac{L\alpha^2(d+17)}{192d}+\dfrac{\sigma^2}{12dLm}\right]k(k+1)(k+2).
    \end{aligned}
\end{equation}
$ $
\end{proof}
The fact that the random time $T_k$ follows the Gamma distribution allows us to use the following exponential concentration inequality.
\begin{lemma}[Chernov inequality]\label{prop:chernov}
    Let $T_k \sim \Gamma(k,1)$. Then, for any $0<\delta_1 \leq 1$,
    \[
        \P(T_k \le \delta_1k)\le e^{-\Big(\delta_1-1-\log(\delta_1)\Big)k}.
\]
\end{lemma}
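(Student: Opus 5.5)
The plan is the standard Chernoff (exponential Markov) argument applied to the lower tail of a Gamma variable, using that $T_k=\sum_{i=1}^{k}E_i$ with $E_i$ i.i.d. $\mathcal{E}(1)$. For any $\lambda>0$, the event $\{T_k\le \delta_1 k\}$ coincides with $\{e^{-\lambda T_k}\ge e^{-\lambda\delta_1 k}\}$. Markov's inequality then gives
\[
\P(T_k\le \delta_1 k)\le e^{\lambda\delta_1 k}\,\E{e^{-\lambda T_k}}.
\]
By independence of the increments, the Laplace transform is $\E{e^{-\lambda T_k}}=(1+\lambda)^{-k}$; this also follows from the $\Gamma(k,1)$ density. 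Hence
\[
\P(T_k\le\delta_1 k)\le \exp\Big(-k\big[\log(1+\lambda)-\lambda\delta_1\big]\Big)\quad\text{for all }\lambda>0.
\]

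Next I optimize over $\lambda$. The function $\lambda\mapsto\log(1+\lambda)-\lambda\delta_1$ is concave, and its derivative vanishes at $1+\lambda=1/\delta_1$, that is, at $\lambda^\ast=\frac{1}{\delta_1}-1$. This value is nonnegative precisely because $\delta_1\le 1$. Substituting gives
\[
\log(1/\delta_1)-(1-\delta_1)=\delta_1-1-\log(\delta_1),
\]
which is exactly the claimed exponent.

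The only subtle point is the boundary case $\delta_1=1$. There $\lambda^\ast=0$ and the exponent is $0$, so the bound reads $\P(T_k\le k)\le 1$, which holds trivially. The strict inequality $\lambda>0$ used in Markov's step is therefore not needed, and one can either treat this case separately or let $\lambda\downarrow 0$. For $\delta_1\in(0,1)$ we have $\lambda^\ast>0$, and the argument applies directly.
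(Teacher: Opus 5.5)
Your argument is correct and complete. The paper states this lemma without proof and invokes it as the classical Chernoff inequality; your derivation is that standard argument: Markov's inequality applied to $e^{-\lambda T_k}$, the Laplace transform $(1+\lambda)^{-k}$, the optimal choice $\lambda^\ast = 1/\delta_1 - 1$, and a correct treatment of the trivial boundary case $\delta_1 = 1$.
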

Now, we have all necessary tools in order to demonstrate our Theorem \ref{thm:concentration_with_Tk}.
\subsubsection{Proof of Theorem \ref{thm:concentration_with_Tk}}
\begin{proof}
First, with the Markov property, for some $\delta_0 >1$, we have that 
\begin{equation*}
    \P\Bigg(T_k^2\big(f(\xalgt{k})-f^\ast\big) \ge \delta_0\E{ T_k^2\big(f(\xalgt{k})-f^\ast\big)} \Bigg) \le \frac{1}{\delta_0}.
\end{equation*}
It follows from the previous inequality and Lemma \ref{lem:acc_nest_zero_v_xi_with_Tk} that with probability at least $1-\frac{1}{\delta_0}$, we have
\begin{equation}
\begin{aligned}\label{eq:thm_concentration_with_Tk_eq1}
        T_k^2\big(f(\xalgt{k})-f^\ast\big)
        &\leq \dfrac{384d^2\kappa_dL\delta_0}{\tau^2}\left(\dfrac{\rho+m}{m}\right)^2V(x_0,x^\ast)+\dfrac{\delta_0\alpha^2Ld(\tau+2)}{2\tau(d+2)}k(k+1)\\
    &\qquad+\left[\dfrac{\delta_0L\alpha^2(d+17)}{192d}+\dfrac{\delta_0\sigma^2}{12dLm}\right]k(k+1)(k+2).
    \end{aligned}
\end{equation}
Moreover, for some $\delta_1 \in (0,1]$, we have thanks to Lemma \ref{prop:chernov} that with probability at least $1-e^{-\big(\delta_1-1-\log(\delta_1)\big)k}$
\begin{equation}\label{eq:thm_concentration_with_Tk_eq2}
    \delta_1^2k^2\big( f(\xalgt{k})-f^\ast\big) \leq T_k^2\big(f(\xalgt{k})-f^\ast\big). 
\end{equation}
Therefore, by using the union bound, we deduce from the two contributions \eqref{eq:thm_concentration_with_Tk_eq1} and \eqref{eq:thm_concentration_with_Tk_eq2} that with probability at least $1-\frac{1}{\delta_0}-e^{-(\delta_1-1-\log(\delta_1))k}$,
\begin{equation}
\begin{aligned}
        f(\xalgt{k})-f^\ast
        &\leq \dfrac{\delta_0}{\delta_1^2}\dfrac{384d^2\kappa_dL}{\tau^2k^2}\left(\dfrac{\rho+m}{m}\right)^2V(x_0,x^\ast)+\dfrac{\delta_0}{\delta_1^2}\dfrac{\alpha^2Ld(\tau+2)}{2\tau(d+2)}\left(1+\dfrac{1}{k}\right)\\
    &\qquad+\dfrac{\delta_0}{\delta_1^2}\left[\dfrac{L\alpha^2(d+17)}{192d}+\dfrac{\sigma^2}{12dLm}\right]k\left(1+\dfrac{1}{k}\right)\left(1+\dfrac{2}{k}\right),
    \end{aligned}
\end{equation}
which yields since $m\geq 1$ and $k\geq 1$
\begin{equation}\label{eq:thm_concentration_with_Tk_eq4}
    \begin{aligned}
        f(\xalgt{k})-f^\ast
        &\leq \dfrac{400\delta_0d^2(\rho+1)^2\kappa_dL}{\delta_1^2\tau^2k^2}V(x_0,x^\ast)+\dfrac{\alpha^2\delta_0Ld(\tau+2)}{\delta_1^2\tau(d+2)} +\dfrac{\delta_0L\alpha^2(d+17)}{32d\delta_1^2}k\\
        &+\dfrac{\delta_0\sigma^2}{2dLm\delta_1^2}k.
    \end{aligned}
\end{equation}
For a given accuracy $\varepsilon > 0$, we want to obtain
\begin{equation*}
    \dfrac{400\delta_0d^2(\rho+1)^2\kappa_dL}{\delta_1^2\tau^2k^2}V(x_0,x^\ast)+\dfrac{\alpha^2\delta_0Ld(\tau+2)}{\delta_1^2\tau(d+2)} +\dfrac{\delta_0L\alpha^2(d+17)}{32d\delta_1^2}k+\dfrac{\delta_0\sigma^2}{2dLm\delta_1^2}k \leq \varepsilon.
\end{equation*}
In particular, we have that 
\begin{equation}\label{eq:thm_concentration_with_Tk_eq5}
    \dfrac{400\delta_0 d^2(\rho+1)^2\kappa_dL}{\delta_1^2\tau^2k^2}V(x_0,x^\ast)\leq \frac{\varepsilon}{4} \Longleftrightarrow k \geq \frac{40d(\rho+1)}{\delta_1\tau}\sqrt{\dfrac{\delta_0\kappa_d LV(x_0,x^\ast)}{\varepsilon}}.
\end{equation}
Moreover, we also obtain that
\begin{equation}\label{eq:thm_concentration_with_Tk_eq6}
    \dfrac{\delta_0\sigma^2k}{2\delta_1^2 dLm}\leq\dfrac{\varepsilon}{4} \Longleftrightarrow m\geq \dfrac{2\delta_0\sigma^2k}{\delta_1^2 dL\varepsilon},
\end{equation}
which leads when using \eqref{eq:thm_concentration_with_Tk_eq5}
\begin{equation}\label{eq:thm_concentration_with_Tk_eq7}
    m\geq \dfrac{80\sigma^2(\rho+1)}{\tau}\sqrt{\dfrac{\delta_0^3\kappa_d V(x_0,x^\ast)}{\delta_1^6\varepsilon^3L}}.
\end{equation}
Then, by using the fact that $\tau\leq 1$ and $k\geq 1$, we derive the trivial bound
\begin{equation*}
    \dfrac{\alpha^2\delta_0Ld(\tau+2)}{\delta_1^2\tau(d+2)} +\dfrac{\delta_0L\alpha^2(d+17)}{32d\delta_1^2}k\leq \dfrac{97\delta_0L\alpha^2(d+17)}{32\delta_1^2 \tau d}k.
\end{equation*}
Hence, it gives with the inequality \eqref{eq:thm_concentration_with_Tk_eq5} that
\begin{equation}
\begin{aligned}\label{eq:thm_concentration_with_Tk_eq8}
        \dfrac{97\delta_0L\alpha^2(d+17)}{32\delta_1^2 \tau d}k \leq \frac{\varepsilon}{2} &\Longleftrightarrow \alpha \leq \sqrt{\dfrac{16 \delta_1^2\tau d \varepsilon}{97 \delta_0 L(d+17) k}}\\
    &\Longleftrightarrow \alpha \leq \dfrac{1}{3}\left(\dfrac{\varepsilon^3\tau^4\delta_1^6}{(d+17)^2(\rho+1)^2\delta_0^3L^3}\right)^{1/4}\Big(\kappa_d V(x_0,x^\ast)\Big)^{-1/4}. 
    \end{aligned}
\end{equation}
Consequently, we from \eqref{eq:thm_concentration_with_Tk_eq5}, \eqref{eq:thm_concentration_with_Tk_eq7} and \eqref{eq:thm_concentration_with_Tk_eq8} we deduce that with choices of $  k(\varepsilon)$, $m  (\varepsilon)$ and $\alpha   (\varepsilon)$ specificed as in the statement of Theorem \ref{thm:concentration_with_Tk},
we ensure from \eqref{eq:thm_concentration_with_Tk_eq4} that with probability at least $1-\frac{1}{\delta_0}-e^{-(\delta_1-1-\log(\delta_1))k(\varepsilon)}$
\[f\left(\xalgt{k(\varepsilon)}\right)-f^\ast \leq \varepsilon.\]
\end{proof}

\begin{figure}[H]
    \centering

    \begin{minipage}{0.32\linewidth}
        \centering
        \small \textbf{Logistic -- Low Sparsity}
        
        \includegraphics[width=\linewidth]{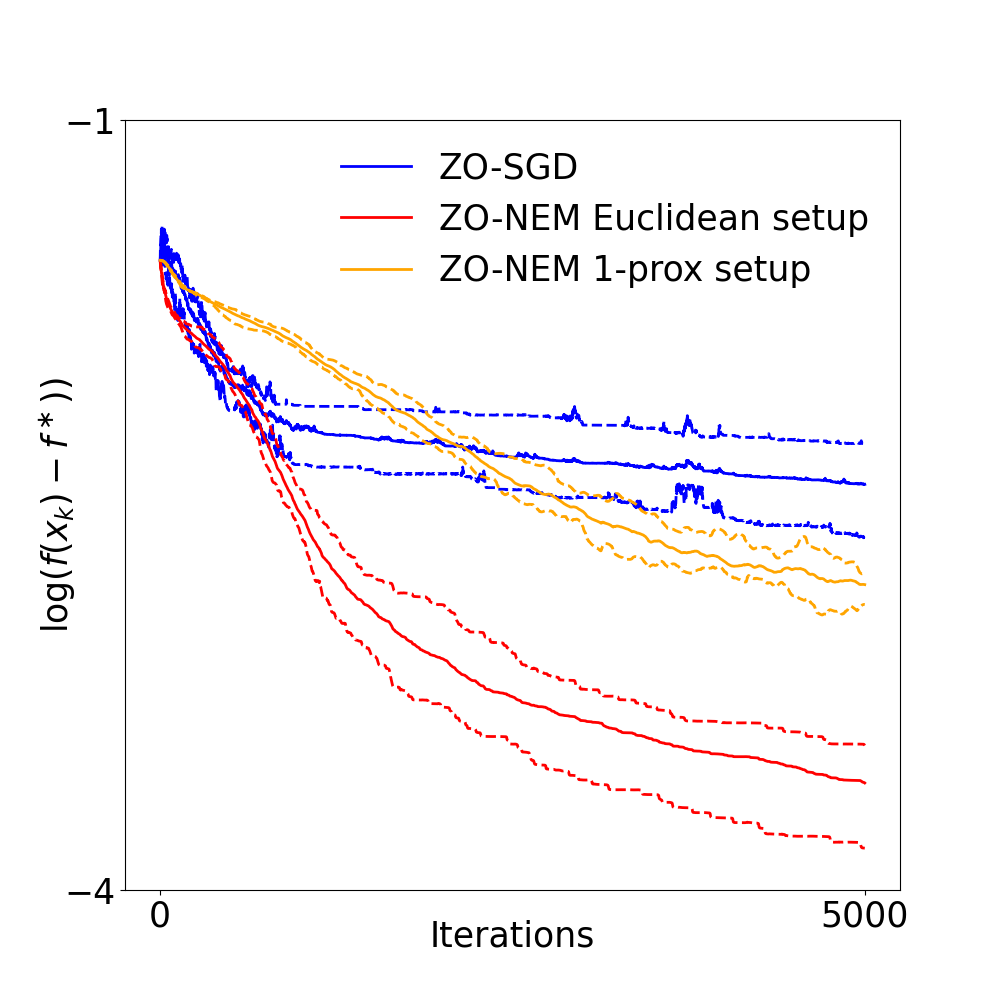}
    \end{minipage}
    \hfill
    \begin{minipage}{0.32\linewidth}
        \centering
        \small \textbf{Quadratic -- Low Sparsity}
        
        \includegraphics[width=\linewidth]{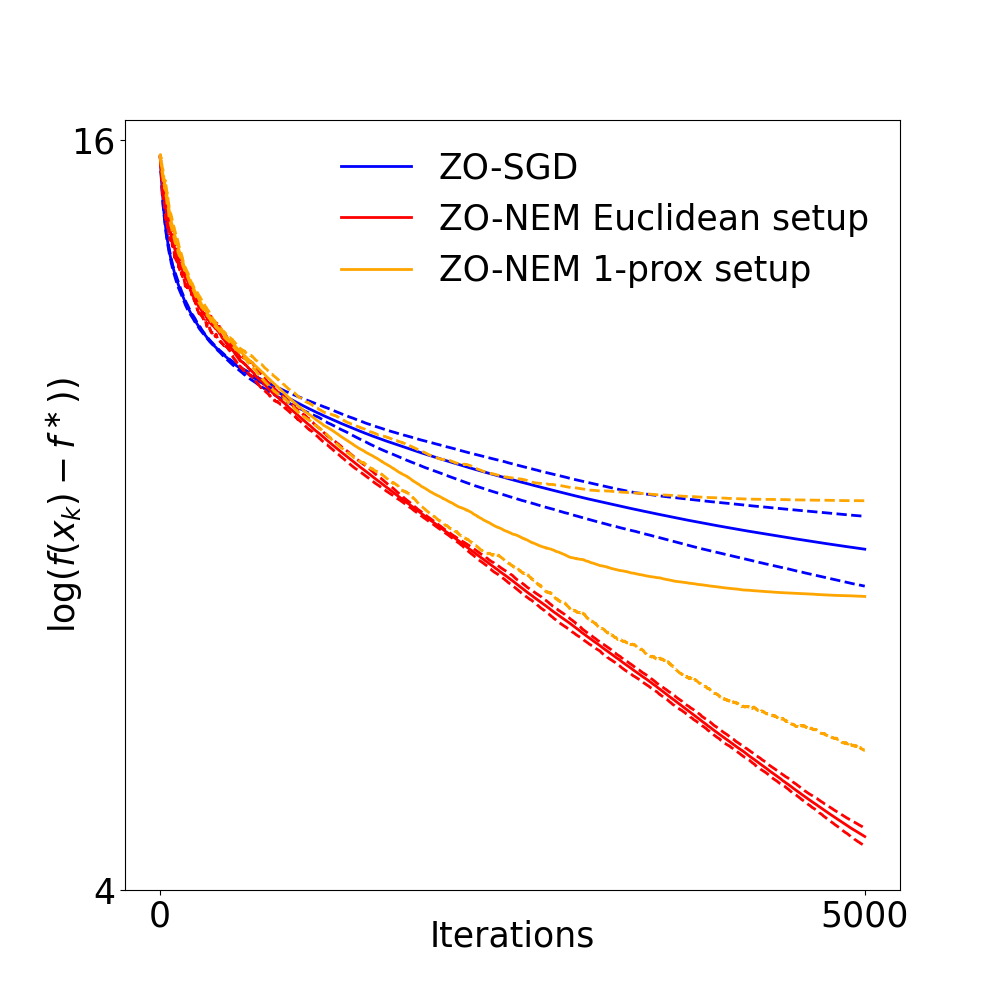}
    \end{minipage}
    \hfill
    \begin{minipage}{0.32\linewidth}
        \centering
        \small \textbf{ReLU -- Low Sparsity}
        
        \includegraphics[width=\linewidth]{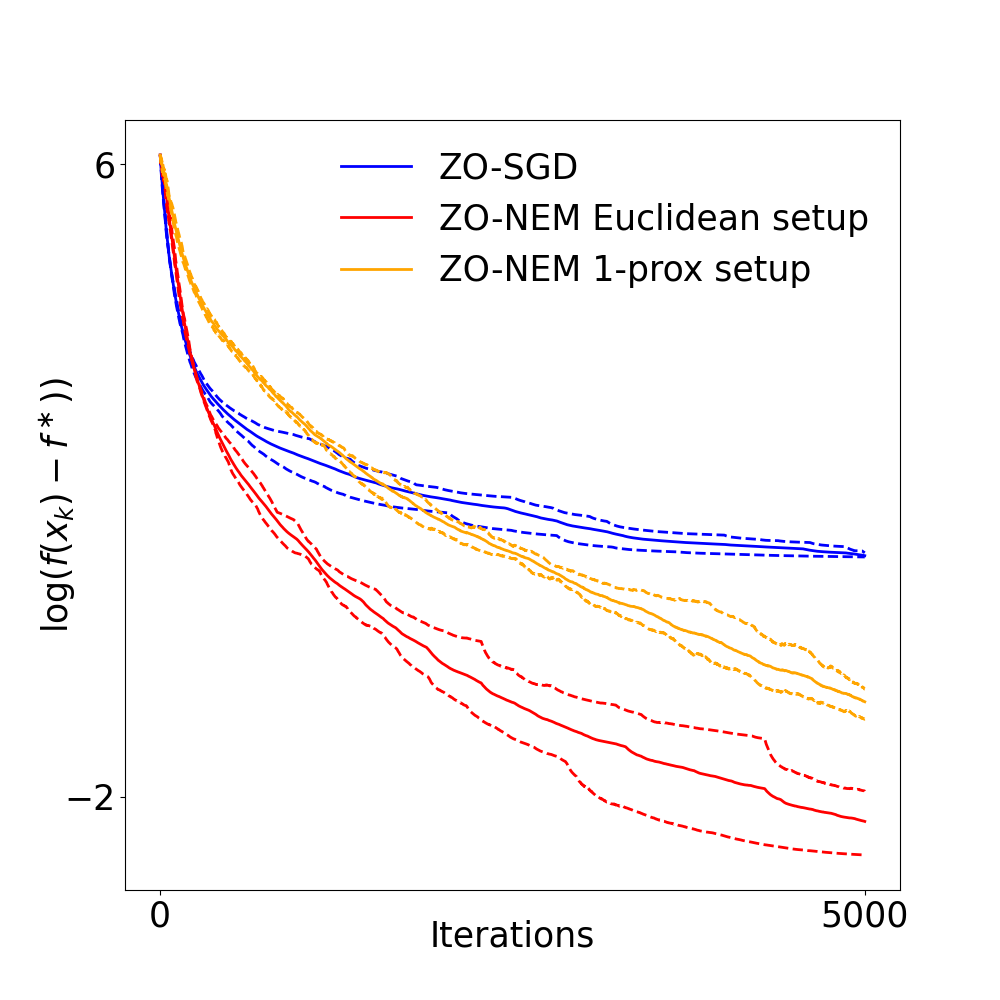}
    \end{minipage}

    \vspace{0.5cm}

    \begin{minipage}{0.32\linewidth}
        \centering
        \small \textbf{Logistic -- High Sparsity}
        
        \includegraphics[width=\linewidth]{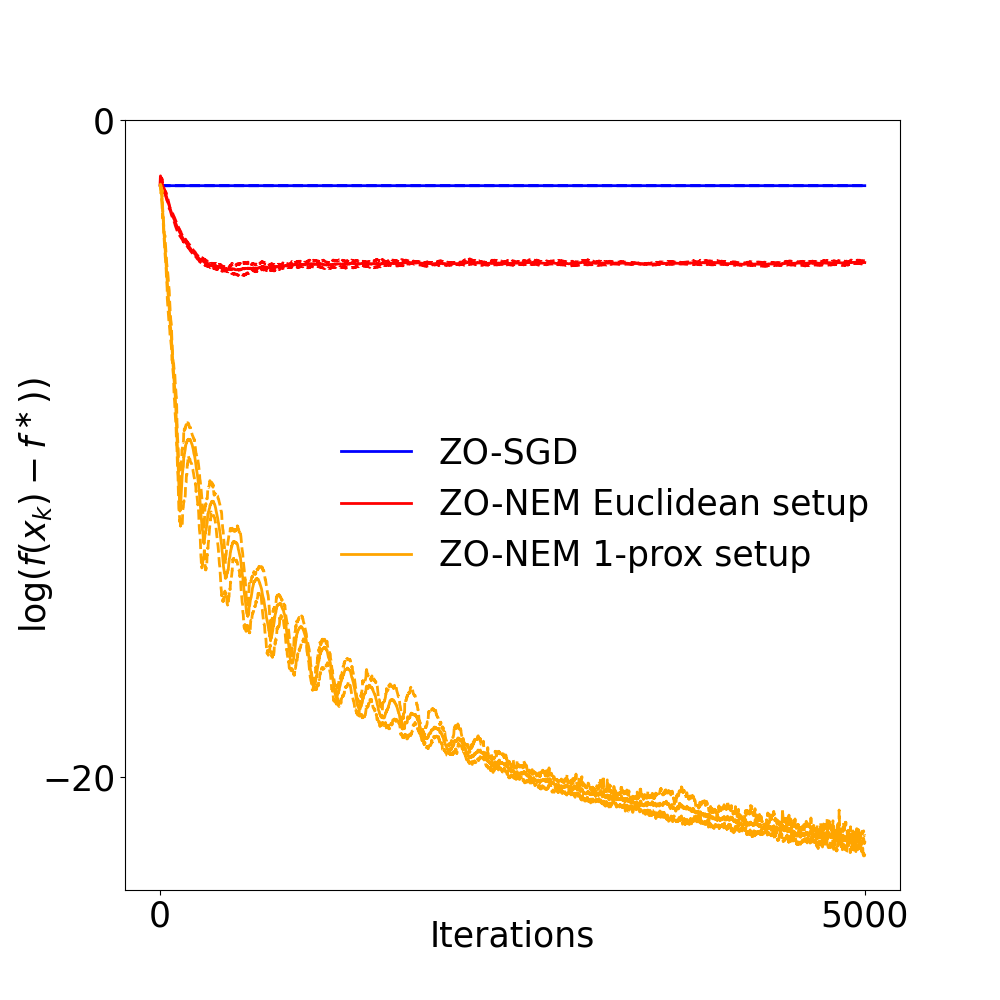}
    \end{minipage}
    \hfill
    \begin{minipage}{0.32\linewidth}
        \centering
        \small \textbf{Quadratic -- High Sparsity}
        
        \includegraphics[width=\linewidth]{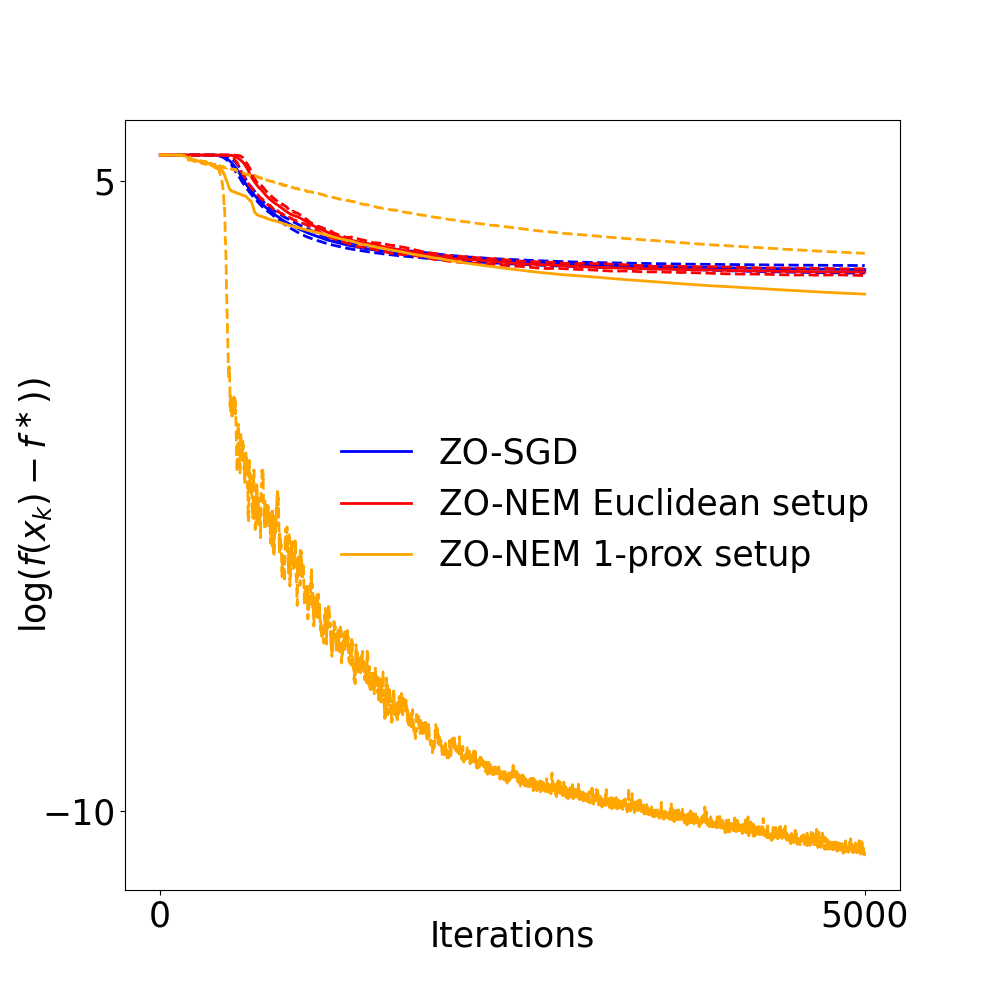}
    \end{minipage}
    \hfill
    \begin{minipage}{0.32\linewidth}
        \centering
        \small \textbf{ReLU -- High Sparsity}
        
        \includegraphics[width=\linewidth]{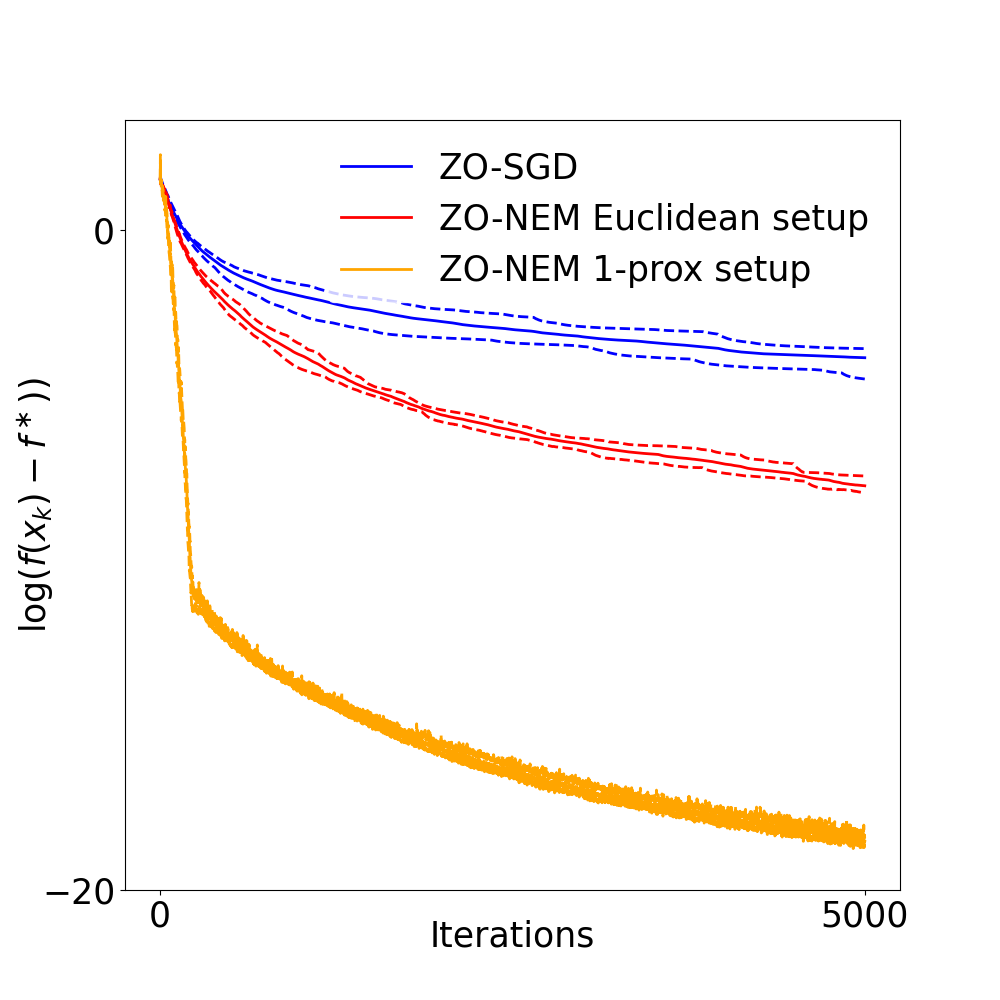}
    \end{minipage}
    \caption{We run \eqref{eq:zo_sgd} and \eqref{alg:cont} in the Euclidean setup and in the $1$-prox setup (see Remark~\ref{rem:prox}), for the training of generalized linear model for different choices of link functions. On the first line, $x^\ast \sim \mathcal{N}(0,I_d)$ which induces no specific sparsity while for the experiment of the second line, $x^\ast$ has all coordinates set to $0$ except the ten first. We observe that momentum improves significantly, and that the $1$-prox setup significantly improves in the sparse regime.} 
    \label{fig:placeholder}
\end{figure}
\section{Numerical experiments}\label{sec:experiments}
In this section, we compare our \ref{alg:cont} algorithm with a zeroth-order stochastic gradient descent method, defined as follows
\begin{equation}\label{eq:zo_sgd}\tag{ZO-SGD}
    \xalg{k+1} = \xalg{k} - \gamma\gest\Big(\xalg{k},\vestalg,\xiestalg\Big).
\end{equation}
This is essentially the algorithm studied in \cite{farzin2025minimisation}. For \eqref{alg:cont}, we run two instances: one with the choice of prox function $\proxf(x) = \frac{1}{2}\norm{x}^2$, which reduces the mirror step to a classic gradient step, and one with the choice $\proxf(x) = \frac{e^1 d^{(\kappa-1)(2-\kappa)/\kappa}\log(d)}{2}\normgen{x}_{\kappa}^2$ with $\kappa=1+\frac{1}{\log(d)}$. In both cases the updates can be explicitly stated and computed with similar computational cost, see \cite{ben2001lectures}.

We consider the training of a Generalized Linear Model (GLM). We generate a sample of size $N=2000$ of $a_i \overset{i.i.d}\sim \mathcal{N}(0,\text{I}_d)$, with $d = 1000$, and for a link function $\sigma : \R \to \R$ and a point $x^\ast \in \R^d$, we define $b_i = \sigma (\dotprod{a_i,x^\ast})$ for each $i \in \{1,\cdots,N\}$. Then, our goal is to approximate $x^\ast$ by minimizing
\[f(x) = \frac{1}{N}\sum_{i=1}^N (\sigma( \dotprod{a_i,x}) - b_i)^2.\]
We consider the logistic link  $\sigma(t) = 1/(1+e^{-t})$, quadratic link $\sigma(t) = t^2$ and Relu link $\sigma(t) = \max(0,t)$. Up to restricting ourselves in a ball around the solution for the quadratic link, the setting thus defined fit our theoretical framework, except for Relu that is not differentiable. Finally, note that we may conveniently create sparsity by selecting $x^\ast$ being a sparse vector. To set ourselves in a low sparsity regime, we may simply select $x^\ast $ following the gaussian distribution $\mathcal{N}(0,\text{I}_d).$ Finally, note that the parameters for each algorithms were tuned via grid-search procedures. 

The results of our experiment are displayed on Figure \ref{fig:placeholder}. For all choices of link functions, there is a clear advantage of \eqref{alg:cont}, except in the initial stage for the quadratic link, which confirms the benefit of using momentum. In the "High Sparsity" regime, the use of the $1$-norm prox shows an undeniable advantage, which again is consistent with the theory.


\appendix

\section{Discretization: Proof of Proposition \ref{prop:disc}}\label{app:disc_proof}
We recall from \eqref{eq:nest_continuizedv2} with the particular choice $\eta_t=\dfrac{\eta}{t}$ for some positive constant $\eta>0$ that for any $t \in (T_k,T_{k+1})$ and for some $k \in \N$, we have
\begin{equation}
    \text{d}x_t = \frac{\eta}{t}(z_t-x_t)\text{d}t \quad \text{and} \quad
        \text{d}z_t = 0.
\end{equation}
This can be explicitly integrated on $(T_k,t]$, which yields

\begin{equation*}
x_t = z_{T_k} + \bpar{\frac{T_k}{t}}^{\eta} (x_{T_k}-z_{T_k})=\bpar{\frac{T_k}{t}}^{\eta} x_{T_k} + \bpar{1-\bpar{\frac{T_k}{t}}^{\eta}}z_{T_k}, \text{ and } z_t = z_{T_k}.
\end{equation*}
Therefore, by taking $t = T_{k+1}^{-}$, it becomes

\begin{equation*}
    \yalg{k} := x_{T_{k+1}^{-}}  =  \bpar{\frac{T_k}{T_{k+1}}}^{\eta} x_{T_k} + \bpar{1-\bpar{\frac{T_k}{T_{k+1}}}^{\eta}}z_{T_k} \quad \text{and} \quad z_{T_{k+1}^-} = z_{T_k}.
\end{equation*}
Now, to get $x_{T_{k+1}}$ and $z_{T_{k+1}}$, it remains to take a gradient step and a mirror step, respectively. Consequently, it follows that 
\begin{equation*}
    \xalg{k+1} := x_{T_{k+1}} = x_{T_{k+1}^-} - \gamma_{T_{k+1^-}}\gest\Big( x_{T_{k+1}^-},\vestalg,\xiestalg\Big) = \yalg{k} - \gamma_{T_{k+1}^-}\gest\Big(\yalg{k},\vestalg,\xiestalg\Big),
\end{equation*}
and 
\begin{equation*}
    \begin{aligned}
        \zalg{k+1} := z_{T_{k+1}} &= z_{T_{k+1}^-} + \left[\prox{\gamma'_{T_{k+1}^-}\gest\Big(x_{T_{k+1}^-},\vestalg,\xiestalg\Big)}{z_{T_{k+1}^-} }-z_{T_{k+1}^-}\right]  \\
        &= \prox{\gamma'_{T_{k+1}^-}\gest\Big(\yalg{k},\vestalg,\xiestalg\Big)}{\zalg{k}}.
    \end{aligned}
\end{equation*}
Finally, defining $\tilde{\gamma}_{k} :=\gamma_{T_{k+1}^{-}}$ and $\tilde{\gamma}'_{k} := \gamma'_{T_{k+1}^{-}}$ concludes the proof.
\section{Technical lemmas}
\subsection{Spherical Smoothing Properties}
\begin{lemma}\label{lem:function_lipschitz_transferv1}
    Assume that the functions $f(\cdot,\xi)$ verify Assumption \ref{ass:all_l_smooth}. Therefore, the functions $f_\alpha(\cdot,\xi)$ also satisfy Assumption \ref{ass:all_l_smooth} with same constant $L(\xi)$.
\end{lemma}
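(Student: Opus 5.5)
The plan is to work directly from the definition $f_\alpha(x,\xi)=\Ec{f(x+\alpha u,\xi)}{u}$ with $u\sim\mathcal{U}(\ball)$, and to pass the smoothness of $f(\cdot,\xi)$ through the expectation over $u$.

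The first step is to justify differentiation under the integral sign. Since $\nabla f(\cdot,\xi)$ is $L(\xi)$-Lipschitz, it is continuous. On the bounded set $\{x+\alpha u : u\in\ball,\ x \text{ in a compact neighbourhood}\}$ it is therefore bounded. Dominated convergence (the ball has finite measure) then gives
\[
\nabla f_\alpha(x,\xi)=\Ec{\nabla f(x+\alpha u,\xi)}{u}.
\]
Note that this does not conflict with the sphere representation quoted from \cite{flaxman2004online} in Lemma \ref{lem:unbiased_prop_g_xi_alpha}; here only the ball representation is needed.

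The second step is the Lipschitz estimate itself. For any $x,y\in\R^d$, Jensen's inequality (convexity of the norm) gives
\[
\norm{\nabla f_\alpha(x,\xi)-\nabla f_\alpha(y,\xi)}\le \Ec{\norm{\nabla f(x+\alpha u,\xi)-\nabla f(y+\alpha u,\xi)}}{u}\le L(\xi)\norm{x-y},
\]
because the two shifted points differ exactly by $x-y$ for every realization of $u$. This yields the first form of Assumption \ref{ass:all_l_smooth} with the same constant $L(\xi)$. The equivalent two-sided quadratic bound then follows by the standard argument: write $f_\alpha(y,\xi)-f_\alpha(x,\xi)-\dotprod{\nabla f_\alpha(x,\xi),y-x}$ as an integral along the segment from $x$ to $y$. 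Alternatively, one can average the pointwise inequality for $f(\cdot,\xi)$ at the points $x+\alpha u$ and $y+\alpha u$ and use the gradient formula above.

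Finally, the moment condition $\sqrt{\Ec{L(\xi)^2}{\xi}}\le L$ carries over unchanged, since the constant is the same $L(\xi)$. The only mildly delicate point is the interchange of gradient and expectation in the first step. The Lipschitz gradient gives the local uniform bound needed for dominated convergence, so I do not expect a real obstacle.
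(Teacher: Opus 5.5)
Your proposal is correct and follows the paper's proof step for step: write $\nabla f_\alpha(x,\xi)=\Ec{\nabla f(x+\alpha u,\xi)}{u\sim\ball}$, then apply Jensen's inequality and the pointwise $L(\xi)$-Lipschitz bound at the shifted points $x+\alpha u$ and $y+\alpha u$. Your dominated-convergence justification of the interchange, and your remarks on the quadratic form and the moment bound, only make explicit what the paper either handles in one line (citing that $f$ is $C^1$ and the integration domain is compact) or leaves implicit.
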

\begin{proof}
We note that $\nabla f_\alpha(x,\xi) = \nabla \Ec{f(x+\alpha u,\xi)}{u\sim \ball} = \Ec{\nabla f(x+\alpha u,\xi)}{u\sim \ball}$, which holds as $f$ is $C^1$ and $\sphere$ is a compact set. Hence, we have for all $x, y \in \R^d$ that
\begin{align*}
      \norm{\nabla f_\alpha(x,\xi)-\nabla f_\alpha(y,\xi)} 
&= \norm{\Ec{\nabla f(x + \alpha u,\xi) - \nabla f(y+\alpha u,\xi)}{u\sim \ball}}\\
&\overset{(i)}\le \Ec{\norm{\nabla f(x + \alpha u,\xi) - \nabla f(y+\alpha u,\xi)}}{u\sim \ball}\\
&\overset{(ii)}\le  L(\xi)\norm{x-y}.
\end{align*}
(i) is due to Jensen's inequality and (ii) by Assumption \ref{ass:all_l_smooth}.
\end{proof}
\begin{lemma}\label{lem:function_lipschitz_transfer}
    Let $f$ satisfies Assumption \ref{ass:all_l_smooth}. Then, for all $x\in \R^d,$ we have 
     \[\vert f(x)-f_\alpha(x) \vert \le \frac{L\alpha^2}{2} \dfrac{d}{d+2}.\]
\end{lemma}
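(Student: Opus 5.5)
The plan is to use the smoothness inequality of Assumption \ref{ass:all_l_smooth} for $f$ itself (whose gradient is $L$-Lipschitz, as noted after the assumption) around the point $x$, integrated against the uniform distribution on the unit ball. For any $u\in\ball$ the two-sided descent inequality gives
\[
\big| f(x+\alpha u) - f(x) - \alpha\dotprod{\nabla f(x),u}\big| \le \frac{L\alpha^2}{2}\norm{u}^2 .
\]
Taking the expectation over $u\sim\mathcal{U}(\ball)$ and using the definition $f_\alpha(x)=\Ec{f(x+\alpha u)}{u}$, the linear term vanishes because $\Ec{u}{u}=0$ by symmetry of the ball. Then Jensen's inequality for the absolute value yields
\[
|f_\alpha(x)-f(x)| \le \frac{L\alpha^2}{2}\Ec{\norm{u}^2}{u\sim\ball}.
\]

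It remains to compute the second moment of a uniform vector on the unit ball. I will use polar coordinates: the radius $r=\norm{u}$ has density $d\,r^{d-1}$ on $[0,1]$, since the volume of the ball of radius $r$ scales as $r^d$. Hence
\[
\Ec{\norm{u}^2}{u} = \int_0^1 d\,r^{d+1}\,\text{d}r = \frac{d}{d+2},
\]
which gives exactly the claimed constant $\frac{L\alpha^2}{2}\frac{d}{d+2}$.

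Two small points need care. First, the smoothness constant: Assumption \ref{ass:all_l_smooth} is stated per $\xi$ with $L(\xi)$, so I should either apply it to $f=\Ec{f(\cdot,\xi)}{\xi}$ via the Lipschitzness of $\nabla f$ with constant $\Ec{L(\xi)}{\xi}\le\sqrt{\Ec{L(\xi)^2}{\xi}}\le L$, or apply the per-$\xi$ inequality, average over $\xi$, and use Jensen in the same way. Second, I need to justify exchanging expectation and gradient and that $\Ec{u}{u}=0$, both of which are immediate from the symmetry and compactness of $\ball$. Neither step is a real obstacle; the only genuinely computational step is the radial moment $d/(d+2)$.
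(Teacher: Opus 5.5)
Your proposal is correct and follows essentially the paper's proof: the two-sided smoothness bound, the linear term vanishing because $\Ec{u}{u\sim\ball}=0$, and $\Ec{\norm{u}^2}{u\sim\ball}=\frac{d}{d+2}$. The paper uses your second option, bounding $|f(x,\xi)-f_\alpha(x,\xi)|$ per $\xi$ with $L(\xi)$ and then using $\Ec{L(\xi)}{\xi}\le\sqrt{\Ec{L(\xi)^2}{\xi}}\le L$; it cites a reference for the ball moment, whereas your computation via the radial density $d\,r^{d-1}$ is self-contained.
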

\begin{proof}

We have that for all $x$ in $\R^d$ that
\begin{align}
    | f(x)-f_\alpha(x) |&= \Big\lvert \Ec{f(x,\xi)-f_\alpha(x,\xi)}{\xi}\Big\rvert \nonumber\\
    &\leq \Ecbig{\lvert f(x,\xi)-f_\alpha(x,\xi)\rvert}{\xi}\label{eq:lem_function_lipschitz_transfer_eq1a}\\
    &=\Ecbig{\lvert \Ec{ f(x,\xi)-f(x+\alpha u)}{u\sim \ball}\rvert}{\xi} 
\end{align}
Furthermore, from Assumption \ref{ass:all_l_smooth} it follows
    \begin{equation}\label{eq:lem_function_lipschitz_transfer_eq1}
        \alpha\dotprod{\nabla f(x,\xi),u} - \frac{L(\xi)\alpha^2}{2}\norm{u}^2 \le f(x+\alpha u,\xi) - f(x,\xi) \le \alpha\dotprod{\nabla f(x,\xi),u} + \frac{L(\xi)\alpha^2}{2}\norm{u}^2.
    \end{equation}
    As the uniform distribution over $\ball$ is symmetric, we have
    \begin{equation}\label{eq:lem_function_lipschitz_transfer_eq2}
        \Ec{u}{u \sim \ball}=0,
    \end{equation}
    which immediately gives 
    \begin{equation}\label{eq:lem_function_lipschitz_transfer_eq3}
        \Ec{\dotprod{\nabla f(x,\xi),u}}{u \sim \ball}=0.
    \end{equation}
    Moreover, one can show that  (see for instance Theorem 1.6.8 in \cite{chafai2026phenomenes})
    \begin{equation}\label{eq:lem_function_lipschitz_transfer_eq4}
        \Ec{\norm{u}^2}{u \sim \ball} = \dfrac{d}{d+2}.
    \end{equation}
   Therefore, we deduce from \eqref{eq:lem_function_lipschitz_transfer_eq1} that
    \[ -\frac{L(\xi)\alpha^2}{2} \dfrac{d}{d+2} \le f_\alpha(x,\xi) - f(x,\xi) \le \frac{L(\xi)\alpha^2}{2} \dfrac{d}{d+2},\]
    which in turn implies
    \begin{equation}\label{eq:lem_function_lipschitz_transfer_eq5}
        | f(x,\xi)-f_\alpha(x,\xi) | \le \frac{L(\xi)\alpha^2}{2}\dfrac{d}{d+2}.
    \end{equation}
    Consequently, we deduce from $\Ec{L(\xi)}{\xi} \le \sqrt{\Ec{L(\xi)^2}{\xi} }\le L$, \eqref{eq:lem_function_lipschitz_transfer_eq1a} and \eqref{eq:lem_function_lipschitz_transfer_eq5} that
    \[| f(x)-f_\alpha(x) | \le \frac{L\alpha^2}{2}\dfrac{d}{d+2}.\]
\end{proof}

\begin{lemma}\label{lem:sphere:quasar_convex}
    Let $f$ satisfies Assumptions \ref{ass:all_l_smooth} and \ref{ass:quasar_conv}. Then, we have for all $x\in \R^d$,
    \[f_\alpha(x) + \frac{1}{\tau}\dotprod{\nabla f_\alpha(x),x^\ast-x} \le f^\ast + \frac{\alpha^2L}{\tau} \dfrac{d}{d+2}. \]
\end{lemma}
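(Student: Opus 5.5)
Since $f_\alpha(x)=\Ec{f(x+\alpha u)}{u}$ with $u\sim\mathcal{U}(\ball)$, and $f$ is $C^1$ with $\nabla f$ $L$-Lipschitz, we can differentiate under the expectation, exactly as in the proof of Lemma \ref{lem:function_lipschitz_transferv1}. This gives $\nabla f_\alpha(x)=\Ec{\nabla f(x+\alpha u)}{u}$. The plan is to apply the quasar-convexity inequality of Assumption \ref{ass:quasar_conv} at each shifted point $y=x+\alpha u$, average over $u$, and control the discrepancy that appears because the inner product is taken with $x^\ast-x$ rather than $x^\ast-y$.

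\textbf{Step 1.} Apply Assumption \ref{ass:quasar_conv} at $y=x+\alpha u$:
\[
f(x+\alpha u)+\frac{1}{\tau}\dotprod{\nabla f(x+\alpha u),x^\ast-x-\alpha u}\le f^\ast .
\]
Rewrite it as
\[
f(x+\alpha u)+\frac{1}{\tau}\dotprod{\nabla f(x+\alpha u),x^\ast-x}\le f^\ast+\frac{\alpha}{\tau}\dotprod{\nabla f(x+\alpha u),u}.
\]
Taking $\Ec{\cdot}{u}$ on both sides yields
\[
f_\alpha(x)+\frac{1}{\tau}\dotprod{\nabla f_\alpha(x),x^\ast-x}\le f^\ast+\frac{\alpha}{\tau}\Ec{\dotprod{\nabla f(x+\alpha u),u}}{u}.
\]

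\textbf{Step 2.} Bound the error term. Since $\Ec{u}{u}=0$ (equation \eqref{eq:lem_function_lipschitz_transfer_eq2}), we can write
\[
\Ec{\dotprod{\nabla f(x+\alpha u),u}}{u}=\Ec{\dotprod{\nabla f(x+\alpha u)-\nabla f(x),u}}{u}.
\]
By Cauchy--Schwarz and the $L$-Lipschitz continuity of $\nabla f$ (a consequence of Assumption \ref{ass:all_l_smooth}, via $\Ec{L(\xi)}{\xi}\le L$ and Jensen), this is at most $\alpha L\,\Ec{\norm{u}^2}{u}=\alpha L\frac{d}{d+2}$, using \eqref{eq:lem_function_lipschitz_transfer_eq4}. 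Multiplying by $\alpha/\tau$ gives exactly the additive term $\frac{\alpha^2 L}{\tau}\frac{d}{d+2}$ in the statement.

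The main point needing care is justifying the interchange of gradient and expectation for $f=\Ec{f(\cdot,\xi)}{\xi}$ and the Lipschitz constant of $\nabla f$. I would argue this directly: $\norm{\nabla f(x)-\nabla f(y)}\le \Ec{L(\xi)}{\xi}\norm{x-y}\le L\norm{x-y}$. No other obstacle is expected, since Assumption \ref{ass:quasar_conv} holds for every point, in particular for every $x+\alpha u$.
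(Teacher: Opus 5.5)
Your proof is correct. Step 1 is identical to the paper's argument; the difference is only in how you bound the error term $\frac{1}{\tau}\Ec{\dotprod{\nabla f(x+\alpha u),\alpha u}}{u}$.

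You center this term using $\Ec{u}{u}=0$, then apply Cauchy--Schwarz and the $L$-Lipschitz continuity of $\nabla f$. The paper instead applies the quadratic upper bound at the point $x+\alpha u$ in the direction of $x$. This gives $\dotprod{\nabla f(x+\alpha u),\alpha u}\le f(x+\alpha u)-f(x)+\frac{L\alpha^2}{2}\norm{u}^2$, whose average is $f_\alpha(x)-f(x)+\frac{L\alpha^2}{2}\frac{d}{d+2}$. It then invokes Lemma~\ref{lem:function_lipschitz_transfer} to bound $f_\alpha(x)-f(x)$ by $\frac{L\alpha^2}{2}\frac{d}{d+2}$. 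Both routes yield exactly the constant $L\alpha^2\frac{d}{d+2}$.

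Your version is shorter and self-contained, since it does not need Lemma~\ref{lem:function_lipschitz_transfer}. The paper's version, once unpacked, amounts to adding the two one-sided quadratic upper bounds between $x$ and $x+\alpha u$, which gives $\dotprod{\nabla f(x+\alpha u)-\nabla f(x),\alpha u}\le L\alpha^2\norm{u}^2$. So it uses only the descent-lemma inequality rather than the full Lipschitz property of the gradient. That distinction is immaterial under Assumption~\ref{ass:all_l_smooth}, where both are available.
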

\begin{proof}
The following proof is inspired from \cite[Lemma 1]{farzin2025minimisation}  who consider access to exact, deterministic function evaluations, the Gaussian smoothing and uses $f_\alpha(x^\ast)$ instead of $f^\ast(:= \min_x f(x))$.
We have
\begin{equation*}
    f_\alpha(x) =\Ecbig{\Ec{f(x+\alpha u,\xi)}{u\sim \ball}}{\xi}
            =\Ecbig{\Ec{f(x+\alpha u,\xi)}{\xi}}{u\sim \ball}
            =\Ec{f(x+\alpha u)}{u\sim \ball}.
\end{equation*}
Hence, it follows 
 \begin{equation}
        \begin{aligned}\label{eq:quas_conv:1}
            f_\alpha(x)
            &\overset{(i)}\le \Ec{f^\ast - \frac{1}{\tau}\dotprod{\nabla f(x+\alpha u),x^\ast -(x + \alpha u)} }{u\sim \ball}\\
            &= f^\ast  - \frac{1}{\tau}\Ec{\dotprod{\nabla f(x + \alpha u),x^\ast - x}}{u\sim \ball} + \frac{1}{\tau}\Ec{\dotprod{\nabla f(x + \alpha u),\alpha u}}{u\sim \ball}\\
            &\overset{(ii)}=f^\ast - \frac{1}{\tau}\dotprod{\nabla f_\alpha(x),x^\ast -x}+ \frac{1}{\tau}\Ec{\dotprod{\nabla f(x + \alpha u),\alpha u}}{u\sim \ball},
        \end{aligned}
    \end{equation}
    where (i) uses Assumption \ref{ass:quasar_conv} and (ii) uses 
    $\Ec{\nabla f(x + \alpha u)}{u\sim \ball} = \nabla f_\alpha(x) $. Then, by Assumption \ref{ass:all_l_smooth}, we obtain that

   \begin{equation}
        \begin{aligned}
           f(x) \le f(x+\alpha u) + \dotprod{\nabla f(x + \alpha u),x - (x+\alpha u)} + \frac{L}{2}\norm{x-(x+\alpha u)}^2,
        \end{aligned}
    \end{equation}
    which we rearrange to deduce
    \begin{equation*}
    \dotprod{\nabla f(x + \alpha u), \alpha u} \le  f(x+\alpha u)-f(x)  + \frac{L\alpha^2}{2}\norm{ u}^2.
    \end{equation*}
Taking the expectation over $u$ on both sides of the previous inequality, it becomes
\begin{equation}
    \begin{aligned}\label{eq:quas_conv:2}
        \Ec{ \dotprod{\nabla f(x + \alpha u),\alpha u}}{u\sim \ball} &\le \Ec{ f(x+\alpha u)-f(x)}{u\sim \ball} + \frac{L\alpha^2}{2}\Ec{\norm{u}^2}{u\sim \ball}\\
        &\overset{(i)}=f_\alpha(x) - f(x) + \frac{L\alpha^2}{2} \dfrac{d}{d+2}\\
        &\overset{(ii)}\le L\alpha^2\dfrac{d}{d+2},
    \end{aligned}
\end{equation}
where $(i)$ uses Equation \eqref{eq:lem_function_lipschitz_transfer_eq4} and $(ii)$ uses Lemma \ref{lem:function_lipschitz_transfer}. We inject \eqref{eq:quas_conv:2} in \eqref{eq:quas_conv:1} and rearrange to obtain the result, namely
\[f_\alpha(x) + \frac{1}{\tau}\dotprod{\nabla f_\alpha(x),x^\ast-x} \le f^\ast + \frac{\alpha^2L}{\tau} \dfrac{d}{d+2}. \]
\end{proof}

The following lemma is Equation (2.35) in \cite{berahas2022theoretical}, fixing in their result $\epsilon := 0$ and $\phi := f$.
\begin{lemma}\label{lem:error_gradient_sphere}
    Suppose that Assumption \ref{ass:all_l_smooth} holds. Then for all $x\in \R^d$,
    \[\norm{\nabla f(x)- \nabla f_\alpha(x)} \le L\alpha. \]
\end{lemma}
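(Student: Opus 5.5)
We have to prove Lemma \ref{lem:error_gradient_sphere}: for all $x$, $\norm{\nabla f(x)-\nabla f_\alpha(x)}\le L\alpha$. The plan is to differentiate under the expectation and use Lipschitz continuity of the gradient of $f$.

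\textbf{Step 1: rewrite $\nabla f_\alpha$.} As in the proof of Lemma \ref{lem:sphere:quasar_convex}, write $f_\alpha(x)=\Ec{f(x+\alpha u)}{u\sim\ball}$ after exchanging the expectations in $\xi$ and $u$. Since $f$ is $C^1$ and $\ball$ is compact, the argument of Lemma \ref{lem:function_lipschitz_transferv1} applies and gives
\[
\nabla f_\alpha(x)=\Ec{\nabla f(x+\alpha u)}{u\sim\ball}.
\]

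\textbf{Step 2: Lipschitz continuity of $\nabla f$.} Assumption \ref{ass:all_l_smooth} gives $\norm{\nabla f(x)-\nabla f(y)}\le \Ec{L(\xi)}{\xi}\norm{x-y}$. Here we use $\nabla f(x)=\Ec{\nabla f(x,\xi)}{\xi}$ together with Jensen's inequality, and then $\Ec{L(\xi)}{\xi}\le \sqrt{\Ec{L(\xi)^2}{\xi}}\le L$.

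\textbf{Step 3: conclude.} By Jensen's inequality,
\[
\norm{\nabla f(x)-\nabla f_\alpha(x)}=\norm{\Ec{\nabla f(x)-\nabla f(x+\alpha u)}{u}}\le \Ec{L\alpha\norm{u}}{u}\le L\alpha,
\]
since $\norm{u}\le 1$ on $\ball$. One could refine this to $L\alpha\, d/(d+1)$ using $\Ec{\norm{u}}{u}=d/(d+1)$, but this is not needed.

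The only potential obstacle is justifying the interchange of gradient and expectation, in both $u$ and $\xi$. On the $u$ side, the smoothness bound gives a local domination: $\norm{\nabla f(x+\alpha u)}\le \norm{\nabla f(x)}+L\alpha$, which is integrable. On the $\xi$ side, the interchange is the standing identity $\nabla f=\Ec{\nabla f(\cdot,\xi)}{\xi}$ that the paper already uses implicitly; it is also justified by dominated convergence, since $L(\xi)$ is square-integrable.
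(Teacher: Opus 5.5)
Your proof is correct. The paper gives no argument of its own and simply cites Equation (2.35) of Berahas et al.\ \cite{berahas2022theoretical} with $\phi := f$ and $\epsilon := 0$; your self-contained argument is the standard one behind that dimension-free bound: write $\nabla f_\alpha(x) = \Ec{\nabla f(x+\alpha u)}{u\sim\ball}$, use $L$-Lipschitzness of $\nabla f$, then apply Jensen and $\norm{u}\le 1$. One small imprecision: square-integrability of $L(\xi)$ alone does not justify swapping $\nabla$ and $\mathbb{E}_\xi$, since you also need integrability of $f(y,\xi)$ and hence of $\nabla f(x,\xi)$, which is implicit in $f(y)$ being finite. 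You can avoid this entirely by citing the paper's standing remark that Assumption \ref{ass:all_l_smooth} makes $\nabla f$ $L$-Lipschitz.
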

\subsection{Variance-Control Bounds}
We now give variance-control bounds. First, Lemma \ref{lem:norm_gradient_xi} consider the first-order estimator $\nabla f(\cdot,\xi)$, from which we deduce bounds for the zeroth order estimator $\gest\Big(\cdot,v,\xiest\Big)$ in Lemma \ref{lem:g_sphere_bound_q} ($p$-norm, $p \in [2,+\infty]$) and in Lemma \ref{lem:g_sphere_bound} ($2$-norm specifically).
\begin{lemma}\label{lem:norm_gradient_xi} Assume that $f$ satisfies Assumptions \ref{ass:all_l_smooth} and \ref{ass:sgc}. Then,
    \[ \Ec{\norm{\nabla f(x,\xi)}^2}{\xi} \leq 2 \rho \norm{\nabla f_\alpha(x)}^2+ 2 \rho \alpha^2 L^2 + \sigma^2.  \]
\end{lemma}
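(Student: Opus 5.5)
The plan is to start from the strong growth condition (Assumption \ref{ass:sgc}) at the point $x$,
\[
\Ec{\norm{\nabla f(x,\xi)}^2}{\xi} \le \rho\norm{\nabla f(x)}^2 + \sigma^2 ,
\]
and then replace $\norm{\nabla f(x)}^2$ by a quantity involving the smoothed gradient $\nabla f_\alpha(x)$.

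For the replacement, insert $\nabla f_\alpha(x)$ and use $\norm{a+b}^2 \le 2\norm{a}^2 + 2\norm{b}^2$:
\[
\norm{\nabla f(x)}^2 \le 2\norm{\nabla f_\alpha(x)}^2 + 2\norm{\nabla f(x)-\nabla f_\alpha(x)}^2 .
\]
The second term is controlled by Lemma \ref{lem:error_gradient_sphere}, which gives $\norm{\nabla f(x)-\nabla f_\alpha(x)}\le L\alpha$. Its hypothesis, Assumption \ref{ass:all_l_smooth}, implies that $\nabla f$ is $L$-Lipschitz, which is exactly what is needed. Therefore
\[
\norm{\nabla f(x)}^2 \le 2\norm{\nabla f_\alpha(x)}^2 + 2L^2\alpha^2 .
\]

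Multiplying this by $\rho$ and adding $\sigma^2$ gives the claimed bound
\[
\Ec{\norm{\nabla f(x,\xi)}^2}{\xi} \le 2\rho\norm{\nabla f_\alpha(x)}^2 + 2\rho\alpha^2L^2 + \sigma^2 .
\]

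No step is a real obstacle. The only point to verify is that Lemma \ref{lem:error_gradient_sphere} is stated for the spherical (ball) smoothing used in this paper, with constant $L$. This holds because $f_\alpha$ is the uniform average over $\ball$ and $\Ec{\norm{u}}{u\sim\ball}\le 1$. Concretely, $\norm{\nabla f(x)-\nabla f_\alpha(x)} \le \Ec{\norm{\nabla f(x)-\nabla f(x+\alpha u)}}{u\sim\ball} \le L\alpha$, so the lemma could also be reproved directly in one line if needed.
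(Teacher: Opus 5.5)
Your proof is correct and matches the paper's argument step for step: apply the strong growth condition at $x$, split $\norm{\nabla f(x)}^2\le 2\norm{\nabla f_\alpha(x)}^2+2\norm{\nabla f(x)-\nabla f_\alpha(x)}^2$, and bound the second term by $L^2\alpha^2$ using Lemma~\ref{lem:error_gradient_sphere}. Your direct one-line justification of that lemma for ball smoothing (Jensen over $u\sim\ball$ together with $L$-Lipschitzness of $\nabla f$) is also valid, and it makes the argument self-contained.
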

\begin{proof}
    We recall from Assumption \ref{ass:sgc} that there exist positive constants $\sigma^2$ and $\rho\geq 1$ such that for all $x$ in $\R^d$,
    \begin{align*}
        \mathbb{E}_{\xi}\left[ \norm{\nabla f(x,\xi)}^2 \right] &\leq \rho \norm{\nabla f(x)}^2 + \sigma^2 \nonumber \\
        &\leq 2 \rho \norm{\nabla f_\alpha(x)}^2+ 2 \rho \norm{\nabla f_\alpha(x)-\nabla f(x)}^2 + \sigma^2,
    \end{align*} 
    thanks to the elementary inequality $\norm{a+b}^2 \le 2\norm{a}^2 + 2\norm{b}^2$, for $a,b \in \R^d$. 
    We conclude by using Lemma \ref{lem:error_gradient_sphere}, from which we deduce
    \[\mathbb{E}_{\xi}\left[ \norm{\nabla f(x,\xi)}^2 \right] \leq 2 \rho \norm{\nabla f_\alpha(x)}^2+ 2 \rho \alpha^2 L^2 + \sigma^2.\]
\end{proof}
Then, we recall Lemma \ref{lem:g_sphere_bound_q} below before providing the proof.

\begin{lemma*}
    Suppose that Assumptions \ref{ass:all_l_smooth} and \ref{ass:sgc} are satisfied. Let $q \in [2,+\infty]$ and $\kappa_d :=  \min\{q-1,16\log(d)-1 \}d^{\frac{2}{q}-1}$. Therefore, for all $x\in \R^d$,
\begin{equation*}
    \Ec{\normq{\gest\left(x,\vest,\xiest\right)}^2 }{v,\xiest}
        \leq \dfrac{d\kappa_d}{m}\Bigg[24(\rho+m)\norm{\nabla f_\alpha(x)}^2+25\alpha^2L^2(\rho+m)+24\sigma^2\Bigg]
\end{equation*}
\end{lemma*}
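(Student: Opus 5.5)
We need to bound $\Ec{\normq{g_\alpha}^2}{v,\xiest}$. The plan is to reduce everything to the scalar factor in front of $v$ and then use Lemma \ref{lem:bound_qnorm}. Write
\[
g_\alpha\bigl(x,v,\xiest\bigr)=\frac{d}{\alpha}\,\Delta(x,v)\,v,\qquad \Delta(x,v):=\frac1m\sum_{i=1}^m\bigl(f(x+\alpha v,\xi_i)-f(x,\xi_i)\bigr).
\]
Then $\normq{g_\alpha}^2=\frac{d^2}{\alpha^2}\Delta^2\normq{v}^2$. For each $i$, Assumption \ref{ass:all_l_smooth} gives
\[
f(x+\alpha v,\xi_i)-f(x,\xi_i)=\alpha\dotprod{\nabla f(x,\xi_i),v}+r_i,\qquad |r_i|\le \tfrac{L(\xi_i)\alpha^2}{2}.
\]
Averaging over $i$ and using $(a+b)^2\le 2a^2+2b^2$ yields
\[
\Delta^2\le 2\alpha^2\dotprod{\bar G,v}^2+\tfrac{\alpha^4}{2}\Bigl(\tfrac1m\sum_i L(\xi_i)\Bigr)^2,\qquad \bar G:=\tfrac1m\sum_i\nabla f(x,\xi_i).
\]
Multiplying by $\frac{d^2}{\alpha^2}\normq{v}^2$ splits the target into a gradient term and a smoothness remainder.

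\textbf{Gradient term.} Condition on $\xiest$, which is independent of $v$, and apply Lemma \ref{lem:bound_qnorm}(i) with $s=\bar G$:
\[
\Ec{2d^2\dotprod{\bar G,v}^2\normq{v}^2}{v}\le 12 d\kappa_d\norm{\bar G}^2.
\]
Next, expand $\Ec{\norm{\bar G}^2}{\xiest}=\norm{\nabla f(x)}^2+\frac1m\bigl(\Ec{\norm{\nabla f(x,\xi)}^2}{\xi}-\norm{\nabla f(x)}^2\bigr)$, using unbiasedness and independence of the $\xi_i$. Assumption \ref{ass:sgc} then bounds this by $\frac{(\rho+m-1)\norm{\nabla f(x)}^2+\sigma^2}{m}\le\frac{(\rho+m)\norm{\nabla f(x)}^2+\sigma^2}{m}$. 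To pass from $\nabla f$ to $\nabla f_\alpha$, use $\norm{\nabla f(x)}^2\le 2\norm{\nabla f_\alpha(x)}^2+2\alpha^2L^2$ from Lemma \ref{lem:error_gradient_sphere}. This gives a contribution of at most
\[
\frac{d\kappa_d}{m}\Bigl[24(\rho+m)\norm{\nabla f_\alpha(x)}^2+24\alpha^2L^2(\rho+m)+12\sigma^2\Bigr],
\]
which is within the claimed constants. I track the case $\rho+m$ versus $\rho+m-1$ loosely, since only the upper bound matters.

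\textbf{Remainder term.} Since $\norm{v}=1$, this term is $\frac{d^2\alpha^2}{2}\bigl(\frac1m\sum_iL(\xi_i)\bigr)^2\normq{v}^2$. By Jensen, $\E{(\frac1m\sum_i L(\xi_i))^2}\le\E{L(\xi)^2}\le L^2$, and by Lemma \ref{lem:bound_qnorm}(ii), $\Ec{\normq{v}^2}{v}\le\kappa_d$, using independence. This gives $\frac{d^2\alpha^2L^2\kappa_d}{2}$, which is not of the form $\frac{d\kappa_d}{m}(\cdots)$.

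\textbf{Main obstacle.} The remainder carries an extra factor $d$, and we need $\frac{d^2\alpha^2L^2\kappa_d}{2}\le \frac{d\kappa_d}{m}\alpha^2L^2(\rho+m)$, i.e. $d\,m\le 2(\rho+m)$, which fails for large $d$. Comparing with Lemma \ref{lem:descent_lemma_alpha}, which contains $dm+8(\rho+m-1)$, the $d$-dependence should genuinely appear there. The first fix I would try is a sharper remainder estimate: bound $|r_i|$ using the full second-order behaviour and combine it with $\Ec{\normq{v}^2}{v}$ more carefully, or work through $\nabla f(x+\theta\alpha v,\xi)$ via the mean value theorem so that the remainder enters through a $\dotprod{\cdot,v}^2\normq{v}^2$ structure and picks up the $1/d$ factor from Lemma \ref{lem:bound_qnorm}(i). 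If that fails, I would accept a constant involving $d$ and check whether the downstream choice of $\alpha(\varepsilon)$ absorbs it. Note also that the lemma's $\kappa_d$ uses $16\log d-1$ rather than $16\log d-8$, which only loosens constants in our favour.
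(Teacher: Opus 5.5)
Your reduction to the scalar factor and your bound on the gradient part are correct and land where the paper does. The paper expands the square of the sum and applies Lemma \ref{lem:bound_qnorm}(i) to $\nabla f(x,\xi_i)$ and $\nabla f(x)$ separately, rather than computing $\mathbb{E}\norm{\bar G}^2$. It reaches the same $\frac{d\kappa_d}{m}[24(\rho+m)\norm{\nabla f_\alpha(x)}^2+24\alpha^2L^2(\rho+m)+12\sigma^2]$.

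The proposal, however, stops at the remainder $\frac{d^2\kappa_d\alpha^2L^2}{2}$, and the repair you sketch cannot work. In the mean-value form $\alpha\dotprod{\nabla f(x+\theta_i\alpha v,\xi_i),v}$ the gradient depends on $v$, so Lemma \ref{lem:bound_qnorm}(i), which needs $s$ independent of $v$, is unavailable. More fundamentally, no argument can close this step, because the statement is false. Take $f(x,\xi)=\frac{L}{2}\norm{x}^2$ for all $\xi$: Assumption \ref{ass:all_l_smooth} holds with $L(\xi)=L$, and Assumption \ref{ass:sgc} holds with $\rho=1$ and any $\sigma^2>0$. Take also $q=2$ (so $\kappa_d=1$), $m=1$ and $x=0$. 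Then $\nabla f_\alpha(0)=0$ and $\gest(0,v,\xi)=\frac{dL\alpha}{2}v$ exactly. The left-hand side equals $\frac{d^2\alpha^2L^2}{4}$, while the right-hand side is $50d\alpha^2L^2+24d\sigma^2$. The inequality therefore fails for every $d>200$ once $\sigma^2<(d-200)\alpha^2L^2/96$. Here the linear part vanishes and the estimator is pure remainder, so the $d^2\alpha^2L^2$ scaling you found is genuine, not an artefact of a loose estimate.

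The paper's proof has the same hole. In \eqref{eq_lem_g_sphere_bound_q_eq4} it obtains $\frac{m^2\alpha^4L^2}{4}\Ec{\normq{v}^2}{v}$, then misprints the next bound as $\frac14\alpha^2d^2L^2\kappa_d$. After the prefactor $\frac{2d^2}{\alpha^2m^2}$ this is exactly your $\frac{d^2\kappa_d\alpha^2L^2}{2}$. The final ``combining'' step then silently absorbs it into the spare $\frac{d\kappa_d}{m}[\alpha^2L^2(\rho+m)+12\sigma^2]$, which for small $\sigma^2$ would require $dm\le 2(\rho+m)$.

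What your argument does establish is the corrected bound, with $\frac{d\kappa_d}{m}\alpha^2L^2\bigl(24(\rho+m)+\frac{dm}{2}\bigr)$ in place of $\frac{25d\kappa_d}{m}\alpha^2L^2(\rho+m)$. This matches the $dm$ term kept in the Euclidean Lemma \ref{lem:g_sphere_bound}. For large $d$, the stated lemma's $\alpha^2L^2$ term at $q=2$ would even be smaller than in that supposedly tighter special case, which is a red flag.

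So your fallback is the right move, and it is cheap downstream. In the proof of Theorem \ref{thm:acc_nest_zero_v_xi}, the extra contribution $\frac{(\gamma'_s)^2}{4}d^2\kappa_d\alpha^2L^2$ equals $\frac{\rho+m-1}{6(\rho+m)}$ times the term $\frac{\gamma_s^2a_sd^2\alpha^2L^3}{4}$ that Lemma \ref{lem:descent_lemma_alpha} already contributes to $K_s$. Only numerical constants in $K'_s$, and hence in $\alpha(\varepsilon)$, change.
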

\begin{proof}
    We first note that 
\[\normq{\gest\left(x,\vest,\xiest\right)}^2 = \frac{d^2}{\alpha^2 m^2}\bpar{\sum_{i=1}^m (f(x+\alpha v,\xi_i)-f(x,\xi_i))}^2\normq{v}^2.\]
Then, using $\norm{a+b}^2 \le 2\norm{a}^2 + 2\norm{b}^2$ for any vectors $a,b \in \R^d$, we deduce that
\begin{equation}
    \begin{aligned}\label{eq_lem_g_sphere_bound_q_eq1}
       \normq{\gest\left(x,\vest,\xiest\right)}^2&\le  \frac{2d^2}{\alpha^2 m^2}\bpar{\sum_{i=1}^m \Big\lvert f(x+\alpha v,\xi_i)-f(x,\xi_i)- \alpha\dotprod{\nabla f(x,\xi_i),v}\Big\rvert}^2\normq{v}^2 \\
       &\quad+ \frac{2d^2}{\alpha^2 m^2}\bpar{\sum_{i=1}^m \alpha\dotprod{\nabla f(x,\xi_i),v}}^2\normq{v}^2 .
    \end{aligned}
\end{equation}
However, it follows from Assumption \ref{ass:all_l_smooth} applied to $x=x$ and $y = x+\alpha v$ that for any $\xi_i \in \{ \xi_1,\cdots,\xi_m\}$,
\begin{equation}\label{eq_lem_g_sphere_bound_q_eq2}
    \Big\lvert f(x+\alpha v,\xi_i)-f(x,\xi_i)- \alpha\dotprod{\nabla f(x,\xi_i),v} \Big\rvert \le \frac{\alpha^2L(\xi_i)}{2}\norm{v}^2 = \frac{\alpha^2 L(\xi_i)}{2},
\end{equation}
where the equality is due to the fact that $v$ belongs to the unit Euclidean sphere $\sphere$. Moreover, from the Jensen's inequality, we have for any positive constants $a_1,\cdots,a_m$ that the following inequality holds
\begin{equation}\label{eq_lem_g_sphere_bound_q_eq3}
    \left(\sum_{i=1}^m a_i \right)^2\leq m \sum_{i=1}^m a_i^2.
\end{equation}
Therefore, we obtain from \eqref{eq_lem_g_sphere_bound_q_eq2} and \eqref{eq_lem_g_sphere_bound_q_eq3} that
\begin{equation*}
    \bpar{\sum_{i=1}^m \Big\lvert f(x+\alpha v,\xi_i)-f(x,\xi_i)- \alpha\dotprod{\nabla f(x,\xi_i),v}\Big\rvert}^2\normq{v}^2\leq \dfrac{m\alpha^4}{4} \left(\sum_{i=1}^m L(\xi_i)^2\right)\normq{v}^2,
\end{equation*}
which leads by taking the expectation with Assumption \ref{ass:all_l_smooth}, the independence of $\xiest$ and $v$, and because $\sqrt{\Ec{L(\xi)^2}{\xi} }\le L$, to
\begin{equation}\label{eq_lem_g_sphere_bound_q_eq4}
    \begin{aligned}
        &\Ec{ \bpar{\sum_{i=1}^m \Big\lvert f(x+\alpha v,\xi_i)-f(x,\xi_i)- \alpha\dotprod{\nabla f(x,\xi_i),v}\Big\rvert}^2\normq{v}^2}{v,\xiest}\\
        &\leq \dfrac{m^2\alpha^4L^2}{4}\Ec{\normq{v}^2}{v}\leq \dfrac{1}{4}\alpha^2 d^2 L^2\kappa_d,
    \end{aligned}
\end{equation}
where the last step holds thanks to $\Ec{\normq{v}}{v} \le \kappa_d$ given by Lemma \ref{lem:bound_qnorm}-(ii). \\
Furthermore, we have
\begin{equation}\label{eq_lem_g_sphere_bound_q_eq5}
    \bpar{\sum_{i=1}^m \alpha\dotprod{\nabla f(x,\xi_i),v}}^2 = \alpha^2\sum_{i=1}^m \dotprod{\nabla f(x,\xi_i),v}^2 + 2\alpha^2\sum_{i<j}^m \dotprod{\nabla f(x,\xi_i),v}\dotprod{\nabla f(x,\xi_j),v}.
\end{equation}
Taking expectation with respect to $\xiest$, we compute
\begin{align}
       & \Ec{\bpar{\sum_{i=1}^m \alpha\dotprod{\nabla f(x,\xi_i),v}}^2\normq{v}^2 }{\xiest} \nonumber\\ 
       &=\alpha^2 \Ec{\sum_{i=1}^m \dotprod{\nabla f(x,\xi_i),v}^2\normq{v}^2 + 2\sum_{i<j}^m \dotprod{\nabla f(x,\xi_i),v}\dotprod{\nabla f(x,\xi_j),v}\normq{v}^2}{\xiest}\nonumber\\
        &=\alpha^2 \sum_{i=1}^m\Ec{\dotprod{\nabla f(x,\xi_i),v}^2\normq{v}^2}{\xi_i} + 2\alpha^2\sum_{i<j}^m \Ec{\dotprod{\nabla f(x,\xi_i),v}\dotprod{\nabla f(x,\xi_j),v}\normq{v}^2}{\xi_i,\xi_j}\nonumber\\
        &\overset{(i)}{=}\alpha^2m \Ec{\dotprod{\nabla f(x,\xi),v}^2\normq{v}^2}{\xi}  +2\alpha^2\sum_{i<j}^m \Ec{\dotprod{\nabla f(x,\xi_i),v}}{\xi_i}\Ec{\dotprod{\nabla f(x,\xi_j),v}}{\xi_j}\normq{v}^2\nonumber\\
        &=\alpha^2m \Ec{\dotprod{\nabla f(x,\xi),v}^2\normq{v}^2}{\xi} + 2\alpha^2 \sum_{i<j}^m \dotprod{\nabla f(x),v}^2\normq{v}^2\nonumber\\
        &\overset{(ii)}\leq \alpha^2m \Ec{\dotprod{\nabla f(x,\xi),v}^2\normq{v}^2}{\xi} + \alpha^2 m^2 \dotprod{\nabla f(x),v}^2\normq{v}^2.\label{eq_lem_g_sphere_bound_q_eq6}
\end{align}
We use in (i) that $\xi_i$ and $\xi_j$ follow the same law for all $i,j$, and (ii) uses $\sum_{1 \le i<j \le m}1 = \frac{m(m-1)}{2}\le \frac{m^2}{2}.$
We now take expectation with respect to $v$ on \eqref{eq_lem_g_sphere_bound_q_eq6}, and use Lemma \ref{lem:bound_qnorm}-(i), such that
\begin{align}
     &\Ec{\bpar{\sum_{i=1}^m \alpha\dotprod{\nabla f(x,\xi_i),v}}^2\normq{v}^2 }{v,\xiest}  \nonumber\\ 
     &\leq \alpha^2m \Ecbig{\Ec{\dotprod{\nabla f(x,\xi),v}^2\normq{v}^2}{v}}{\xi} + \alpha^2 m^2 \Ec{\dotprod{\nabla f(x),v}^2\normq{v}^2}{v}\nonumber\\
     &\leq \dfrac{6\alpha^2m\kappa_d}{d}\Ec{\norm{\nabla f(x,\xi)}^2}{\xi}+ \dfrac{6\alpha^2m^2\kappa_d}{d}\norm{\nabla f(x)}^2.\label{eq_lem_g_sphere_bound_q_eq7}
\end{align}
Then, we deduce via Lemma \ref{lem:error_gradient_sphere} and Lemma \ref{lem:norm_gradient_xi} that the inequality \eqref{eq_lem_g_sphere_bound_q_eq7} gives
\begin{equation}
    \begin{aligned}\label{eq_lem_g_sphere_bound_q_eq8}
        &\Ec{\bpar{\sum_{i=1}^m \alpha\dotprod{\nabla f(x,\xi_i),v}}^2\normq{v}^2 }{v,\xiest} \\ 
     &\leq \dfrac{12\alpha^2m\kappa_d(\rho+m)}{d}\norm{\nabla f_\alpha(x)}^2 + \dfrac{6\alpha^2m\kappa_d}{d}\Big[2\alpha^2L^2(\rho+m)+\sigma^2\Big].
    \end{aligned}
\end{equation}
Consequently, combining the three contributions \eqref{eq_lem_g_sphere_bound_q_eq1}, \eqref{eq_lem_g_sphere_bound_q_eq4}, and \eqref{eq_lem_g_sphere_bound_q_eq8}, we deduce the following bound
\[\Ec{\normq{\gest\left(x,\vest,\xiest\right)}^2 }{v,\xiest}
        \leq \dfrac{d\kappa_d}{m}\Bigg[24(\rho+m)\norm{\nabla f_\alpha(x)}^2+25\alpha^2L^2(\rho+m)+24\sigma^2\Bigg].\]
$ $
\end{proof}
In the special case $q=2$, one can derive a tighter bound.
\begin{lemma}\label{lem:g_sphere_bound}
Under Assumptions \ref{ass:all_l_smooth} and \ref{ass:sgc}, we have
\begin{equation*}
    \begin{aligned}
        &\Ec{\norm{\gest\left(x,\vest,\xiest\right)}^2}{v,\xiest}\\
        &\leq \dfrac{4d(\rho+m-1)}{m}\norm{\nabla f_\alpha(x)}^2+\dfrac{d\alpha^2L^2}{2m}\Big[dm+8(\rho+m-1)\Big]+\dfrac{2d\sigma^2}{m}.
    \end{aligned}
\end{equation*}
\end{lemma}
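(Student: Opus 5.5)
The proof follows the same template as the proof of Lemma \ref{lem:g_sphere_bound_q}, with $q=2$. In that case $\normq{v}^2=\norm{v}^2=1$ on $\sphere$, so no moment bound from Lemma \ref{lem:bound_qnorm} is needed. Instead, I will use the exact identity $\Ec{vv^\top}{v\sim\sphere}=\frac{1}{d}\I$. This gives $\Ec{\dotprod{s,v}^2}{v}=\norm{s}^2/d$ for every $s\in\R^d$.

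Writing $\Delta_i:=f(x+\alpha v,\xi_i)-f(x,\xi_i)-\alpha\dotprod{\nabla f(x,\xi_i),v}$, we have
\[\norm{\gest}^2=\frac{d^2}{\alpha^2m^2}\Big(\sum_i\big(\alpha\dotprod{\nabla f(x,\xi_i),v}+\Delta_i\big)\Big)^2.\]
Splitting with $(a+b)^2\le 2a^2+2b^2$ handles the remainder term. By Assumption \ref{ass:all_l_smooth}, $|\Delta_i|\le \alpha^2L(\xi_i)/2$. Then $(\sum_i|\Delta_i|)^2\le m\sum_i\Delta_i^2$ has expectation at most $m^2\alpha^4L^2/4$. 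After the factor $2d^2/(\alpha^2m^2)$ this contributes $d^2\alpha^2L^2/2$, which is the $\frac{d\alpha^2L^2}{2m}\,dm$ term of the statement.

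For the linear term I will expand the square in $\xiest$ exactly, as in \eqref{eq_lem_g_sphere_bound_q_eq6}, keeping the count $m(m-1)$ rather than $m^2$:
\[
\Ec{\Big(\sum_i\dotprod{\nabla f(x,\xi_i),v}\Big)^2}{v,\xiest}=\frac{m}{d}\Ec{\norm{\nabla f(x,\xi)}^2}{\xi}+\frac{m(m-1)}{d}\norm{\nabla f(x)}^2 .
\]
Assumption \ref{ass:sgc} bounds this by $\frac{m}{d}\big[(\rho+m-1)\norm{\nabla f(x)}^2+\sigma^2\big]$. Multiplying by $2d^2/m^2$ gives $\frac{2d}{m}\big[(\rho+m-1)\norm{\nabla f(x)}^2+\sigma^2\big]$, and the $\sigma^2$ part matches $\frac{2d\sigma^2}{m}$ exactly.

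The remaining step is to convert $\norm{\nabla f(x)}^2\le 2\norm{\nabla f_\alpha(x)}^2+2\alpha^2L^2$ via Lemma \ref{lem:error_gradient_sphere}. This yields $\frac{4d(\rho+m-1)}{m}\norm{\nabla f_\alpha(x)}^2+\frac{4d\alpha^2L^2(\rho+m-1)}{m}$, and the last piece is exactly $\frac{d\alpha^2L^2}{2m}\cdot 8(\rho+m-1)$. Summing the three contributions reproduces the claimed bound.

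The only delicate point is the bookkeeping of constants. One must keep $m(m-1)$ so that $\rho+m-1$ appears rather than $\rho+m$. One must also apply Assumption \ref{ass:sgc} before converting to $\nabla f_\alpha$, so that the coefficient of $\norm{\nabla f_\alpha(x)}^2$ is $4(\rho+m-1)$ and not something larger.
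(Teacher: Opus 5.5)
Your proof is correct and is essentially the route the paper indicates when it leaves this lemma to the reader. You follow the template of Lemma \ref{lem:g_sphere_bound_q} with $q=2$, use $\mathbb{E}[vv^\top]=\frac{1}{d}\I$, and keep the exact $m(m-1)$ cross-term count so that $\rho+m-1$ appears; all three constants match the statement. One minor remark: your caution about ordering is unnecessary, since bounding first via Lemma \ref{lem:norm_gradient_xi} and Lemma \ref{lem:error_gradient_sphere} (as the paper does for general $q$) gives the same coefficient $4(\rho+m-1)$ and the same $\alpha^2L^2$ term.
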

The proof of Lemma \ref{lem:g_sphere_bound} is left to the reader. The lines are very similar to that of the general case of $q$. The main difference is that we exploit the exact identity $\E{ v v^T}= (1/d)\I_d$ coming from the fact that $v$ follows the uniform distribution on $\sphere$.
\begin{lemma}\label{lem:descent_lemma_alpha}
Suppose that Assumptions \ref{ass:all_l_smooth} and \ref{ass:sgc} hold. Then, for all $x$ in $\R^d$,
\begin{equation*}
    \begin{aligned}
        &\Ec{f_\alpha\left(x - \gamma g_{\alpha}\Big(x,v,\xiest\Big)\right) - f_\alpha(x)}{v,\xiest}\\
        &\leq \gamma\left[\dfrac{2\gamma Ld(\rho+m-1)}{m}-1\right]\norm{\nabla f_\alpha(x)}^2+\dfrac{d\gamma^2\alpha^2L^3}{4m}\Big[dm+8(\rho+m-1)\Big]+\dfrac{dL\gamma^2\sigma^2}{m}.
    \end{aligned}
\end{equation*}
\end{lemma}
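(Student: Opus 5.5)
The plan is to combine the $L$-smoothness of $f_\alpha$ with the Euclidean second-moment bound of Lemma \ref{lem:g_sphere_bound}. By Lemma \ref{lem:function_lipschitz_transferv1}, each $f_\alpha(\cdot,\xi)$ has an $L(\xi)$-Lipschitz gradient. Taking expectation in $\xi$ and using Jensen together with $\Ec{L(\xi)}{\xi}\le L$, the gradient of $f_\alpha$ is $L$-Lipschitz. We therefore have the descent inequality
\[
f_\alpha(x-\gamma g)\le f_\alpha(x)-\gamma\dotprod{\nabla f_\alpha(x),g}+\frac{L\gamma^2}{2}\norm{g}^2,
\]
applied with $g=\gest\bpar{x,v,\xiest}$.

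Next I take the conditional expectation with respect to $(v,\xiest)$, with $x$ fixed. By Lemma \ref{lem:unbiased_prop_g_xi_alpha}, the linear term becomes $-\gamma\norm{\nabla f_\alpha(x)}^2$. The quadratic term is $\frac{L\gamma^2}{2}\Ec{\norm{\gest}^2}{v,\xiest}$, and I bound it with Lemma \ref{lem:g_sphere_bound}. Multiplying that bound by $L\gamma^2/2$ gives three pieces:
\begin{itemize}
\item $\frac{2\gamma^2 Ld(\rho+m-1)}{m}\norm{\nabla f_\alpha(x)}^2$,
\item $\frac{d\gamma^2\alpha^2L^3}{4m}\bpar{dm+8(\rho+m-1)}$,
\item $\frac{dL\gamma^2\sigma^2}{m}$.
\end{itemize}
Adding these to $-\gamma\norm{\nabla f_\alpha(x)}^2$ reproduces exactly the claimed right-hand side. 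So, taking Lemma \ref{lem:g_sphere_bound} as given, the statement is a two-line consequence.

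The real work is in Lemma \ref{lem:g_sphere_bound}, whose proof the paper leaves to the reader, so I would sketch it. Write each finite difference as $\alpha\dotprod{\nabla f(x,\xi_i),v}+r_i$ with $|r_i|\le \alpha^2L(\xi_i)/2$, and split using $\norm{a+b}^2\le 2\norm{a}^2+2\norm{b}^2$.

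\emph{Remainder part.} Since $\norm{v}=1$, this part is at most
\[
\frac{2d^2}{\alpha^2m^2}\cdot\frac{m\alpha^4}{4}\sum_i L(\xi_i)^2,
\]
whose expectation is at most $\frac{d^2\alpha^2L^2}{2}$. This gives the $\frac{d\alpha^2L^2}{2m}\,dm$ term.

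\emph{Linear part.} Expanding the square, the diagonal terms use $\E{vv^T}=\I_d/d$ and give $\frac{2d}{m}\Ec{\norm{\nabla f(x,\xi)}^2}{\xi}$. The cross terms use independence of the $\xi_i$ and give $\frac{2d\,m(m-1)}{m^2}\norm{\nabla f(x)}^2$.

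\emph{Returning to $f_\alpha$.} Apply Assumption \ref{ass:sgc} to the diagonal term, and then pass from $\nabla f$ to $\nabla f_\alpha$ via $\norm{\nabla f}^2\le 2\norm{\nabla f_\alpha}^2+2L^2\alpha^2$ (Lemma \ref{lem:error_gradient_sphere}). Collecting, the coefficient of $\norm{\nabla f_\alpha(x)}^2$ is $\frac{2d}{m}\cdot 2(\rho+m-1)$, and the $\alpha^2$ term is $\frac{2d}{m}\cdot 2L^2\alpha^2(\rho+m-1)=\frac{d\alpha^2L^2}{2m}\cdot 8(\rho+m-1)$. These match the lemma.

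The main obstacle is the constant bookkeeping in this last step, specifically getting the combined factor $(\rho+m-1)$. The point to watch is that the cross terms contribute $m(m-1)$ rather than $m^2$, so that $\rho+(m-1)$ comes out exactly. Everything else is routine.
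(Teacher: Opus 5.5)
Your proposal is correct and follows the paper's proof: $L$-smoothness of $f_\alpha$ via Lemma \ref{lem:function_lipschitz_transferv1}, the descent inequality, unbiasedness from Lemma \ref{lem:unbiased_prop_g_xi_alpha}, and the second-moment bound of Lemma \ref{lem:g_sphere_bound}, with all constants matching. Your sketch of Lemma \ref{lem:g_sphere_bound}, which the paper leaves to the reader, is also correct, including the $m(m-1)$ count of cross terms that yields the factor $\rho+m-1$.
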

\begin{proof}
We deduce from Lemma \ref{lem:function_lipschitz_transferv1} that under Assumption \ref{ass:all_l_smooth}, the function $f_\alpha$ has also Lipschitz gradient with constant $L$. Thus, 
for all $x$ in $\R^d$ we have
\begin{equation*}
    \begin{aligned}
        &f_\alpha\left(x - \gamma g_{\alpha}\Big(x,v,\xiest\Big)\right)  \\
        &\leq f_\alpha(x) -\gamma\dotprod{\nabla f_{\alpha}(x), g_{\alpha}\Big(x,v,\xiest\Big)} + \frac{\gamma^2 L}{2}\norm{g_{\alpha}\Big(x,v,\xiest\Big)}^2,
    \end{aligned}
\end{equation*}
which leads by taking the expectation to
\begin{equation}\label{eq:lem_descent_lemma_alpha_eq1}
    \begin{aligned}
    &\Ec{f_\alpha\left(x - \gamma g_{\alpha}\Big(x,v,\xiest\Big)\right) - f_\alpha(x)}{v,\xiest}\\
    &\leq  -\gamma\dotprod{\nabla f_{\alpha}(x), \Ec{g_{\alpha}\Big(x,v,\xiest\Big)}{v,\xiest}} + \frac{\gamma^2 L}{2}\Ec{\norm{g_{\alpha}\Big(x,v,\xiest\Big)}^2}{v,\xiest}.
\end{aligned}
\end{equation}
We recall the unbiasedness property of $\gest\Big(x,v,\xiest\Big)$ given in \eqref{eq:unbiased_prop_g_xi_alpha},
\begin{equation}\label{eq:lem_descent_lemma_alpha_eq2}
    \Ec{g_{\alpha}\Big(x,v,\xiest\Big)}{v,\xiest}=\nabla f_{\alpha}(x).
\end{equation}
Consequently, we conclude from \eqref{eq:lem_descent_lemma_alpha_eq1} and \eqref{eq:lem_descent_lemma_alpha_eq2} with Lemma \ref{lem:g_sphere_bound} that
\begin{equation*}
    \begin{aligned}
        \Ec{f_\alpha\left(x - \gamma g_{\alpha}\Big(x,v,\xiest\Big)\right)}{v,\xiest}
        &\leq f_\alpha(x)+ \gamma\left[\dfrac{2\gamma Ld(\rho+m-1)}{m}-1\right]\norm{\nabla f_\alpha(x)}^2\\
        &+\dfrac{\gamma^2L}{2}\left[\dfrac{d\alpha^2L^2}{2m}\Big[dm+8(\rho+m-1)\Big]+\dfrac{2d\sigma^2}{m}\right].
    \end{aligned}
\end{equation*}
$ $
\end{proof}

\section{Continuized Tool-Box}

We state the basic tools needed to make derive convergence results with system \eqref{eq:nest_continuizedv2}. 
\subsection{Itô Formula}

In what follows, we establish our Proposition \ref{prop:sto_calc_abridged_with_jump} which extends the usual Itô formula to the case where the Lyapunov function is not differentiable with respect to all its components. We overcome this difficulty by investigating the particular process \eqref{eq_prop_sto_calc_abridged_with_jump_res1} and exploiting the separability of the Lyapunov function as specified in \eqref{eq_prop_sto_calc_abridged_with_jump_res2}. 
\begin{proposition}\label{prop:sto_calc_abridged_with_jump}
Let $\overline{x}_t= (t,x_t,z_t) \in \R_{\geq 0}\times \R^d \times \R^d$ be a solution of 
\begin{equation}\label{eq_prop_sto_calc_abridged_with_jump_res1}
    \text{d}\overline{x}_t = \zeta(\overline{x}_t)\text{d}t + \int_{\sphere \times \Xi^m} G\Big(\overline{x}_{t^-},v,\xiest\Big)\text{d}N\Big(t,v,\xiest\Big),
\end{equation}
where $\zeta$ and $G$ are defined in \eqref{eq:proof_acc_zero_v_xi_eq1a}. Let $\varphi(t,x,z)$ defined by
\begin{equation}\label{eq_prop_sto_calc_abridged_with_jump_res2}
    \varphi(t,x,z) = \varphi_0(t,x,z) + \varphi_1(z),
\end{equation}
where $\varphi_0 :\R_{\geq 0}\times \R^d \times \R^d \to \R$ is $C^1$ and $\varphi_1 : \R^d \to \R$ is measurable. Then,
\begin{equation*}
    \begin{aligned}
        &\varphi(\overline{x}_t)-\varphi(\overline{x}_0) \\
        &= \int_{0}^t \dotprod{\nabla \varphi_0(\overline{x}_s),\zeta(\overline{x}_s)}\text{d}s + \int_0^t  \mathbb{E}_{v,\xiest}\Big[ \varphi\Big(\overline{x}_s + G\Big(\overline{x}_s,v,\xiest\Big)\Big) - \varphi(\overline{x}_s)\Big]\text{d}s + M_t,
    \end{aligned}
\end{equation*}
where $M_t$ is a martingale such that $\E{M_t}=0$ for all $t\geq 0$.
\end{proposition}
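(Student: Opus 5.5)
The plan is to exploit the piecewise-deterministic structure of \eqref{eq_prop_sto_calc_abridged_with_jump_res1}. Between two consecutive jump times $T_k < T_{k+1}$ the process evolves by the ODE $\text{d}\overline{x}_t = \zeta(\overline{x}_t)\text{d}t$. Along this ODE the third component of $\zeta$ is $0$, so $z_t$ is constant on $[T_k,T_{k+1})$. As a result $\varphi_1(z_t)$ does not vary between jumps, and the only contribution to the increment of $\varphi$ on such an interval comes from $\varphi_0$, which is $C^1$. This is precisely where the separable form \eqref{eq_prop_sto_calc_abridged_with_jump_res2} is used: no differentiability of $\varphi_1$ is ever needed.

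First I decompose pathwise, writing $N_t$ for the (a.s. finite) number of jumps in $(0,t]$:
\begin{equation*}
\varphi(\overline{x}_t)-\varphi(\overline{x}_0) = \sum_{k}\Big[\varphi_0(\overline{x}_{(T_{k+1}\wedge t)^-})-\varphi_0(\overline{x}_{T_k\wedge t})\Big] + \sum_{k\ge 1,\,T_k\le t}\Big[\varphi(\overline{x}_{T_k})-\varphi(\overline{x}_{T_k^-})\Big].
\end{equation*}
The continuous pieces are handled by the chain rule for $C^1$ functions along absolutely continuous paths, which gives $\int_0^t \dotprod{\nabla\varphi_0(\overline{x}_s),\zeta(\overline{x}_s)}\text{d}s$. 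The jump at $T_k$ equals $\varphi(\overline{x}_{T_k^-}+G(\overline{x}_{T_k^-},v_k,\xi_k^{\{m\}}))-\varphi(\overline{x}_{T_k^-})$. Because $\varphi_1$ is not continuous, I will write both values of $\varphi$ explicitly at these points rather than arguing by limits. The jump sum is therefore $\int_0^t\int H(s,v,\xiest)\,\text{d}N(s,v,\xiest)$, where $H(s,v,\xiest) := \varphi(\overline{x}_{s^-}+G(\overline{x}_{s^-},v,\xiest))-\varphi(\overline{x}_{s^-})$ is predictable in $s$.

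Second, I compensate the Poisson measure. Its compensator is $\text{d}s\otimes\mathcal{U}(\sphere)\otimes\mathcal{P}_\xi^{\otimes m}$, so I set $M_t:=\int_0^t\int H\,(\text{d}N-\text{d}s\,\text{d}\mathcal{U}\,\text{d}\mathcal{P}_\xi^{\otimes m})$. This produces the drift term $\int_0^t\mathbb{E}_{v,\xiest}[H]\text{d}s$. In that integral $\overline{x}_{s^-}$ can be replaced by $\overline{x}_s$, since the two differ only on the countable set of jump times, which has Lebesgue measure zero. Combining the pieces yields the claimed identity.

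The main obstacle is justifying that $M_t$ is a true martingale with $\E{M_t}=0$, and not merely a local martingale. The standard criterion is $\E{\int_0^t\int|H|\,\text{d}s\,\text{d}\mathcal{U}\,\text{d}\mathcal{P}^{\otimes m}}<\infty$, under which the compensated integral is a martingale. I would check this integrability, or else assume it as a standing condition for the Lyapunov functions used. For the $\varphi$ of \eqref{eq:proof_acc_zero_v_xi_eq1}, the required bounds are second moments: the jump of $V(\cdot,x^\ast)$ is controlled through \eqref{eq:proof_acc_zero_v_xi_eq4b2} by $\normq{g_\alpha}^2$, which Lemma~\ref{lem:g_sphere_bound_q} bounds, and the jump of $f_\alpha$ is controlled by $L$-smoothness. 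If only local integrability can be shown directly, the fallback is to localize with the stopping times $T_n\wedge t$, apply the identity up to these times, and pass to the limit $n\to\infty$ using monotone or dominated convergence.
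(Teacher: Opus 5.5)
Your proposal is correct and is essentially the paper's argument. The paper applies the standard smooth Itô formula of Even et al.\ to the $C^1$ part $\varphi_0$, then separately telescopes $\varphi_1(z_t)$ over the jump times, using that $z$ is constant between jumps to remove the inter-jump increments, and compensates the resulting Poisson integral; you carry out the same pathwise decomposition for $\varphi$ in one pass and re-derive the $\varphi_0$ part by the chain rule.

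Your attention to the integrability needed for $M_t$ to be a true (not merely local) martingale is more careful than the paper, which simply asserts it. For the specific $\varphi$, that check also needs a priori moment bounds on $(x_s,z_s)$, because \eqref{eq:proof_acc_zero_v_xi_eq4b2} only bounds the jump from above and the bound of Lemma~\ref{lem:g_sphere_bound_q} involves $\norm{\nabla f_\alpha(x_s)}^2$.
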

\begin{proof}
Since the function $\varphi_0$ is a $C^1$ function, it follows from the usual Itô Formula for smooth Lyapunov applied to $\varphi_0$ (see \textit{e.g.} \cite[Proposition 2]{even2021continuized}) that there exists a martingale $\big(M_t^{(0)}\big)_{t\ge 0}$ with expectation zero such that
\begin{equation}\label{eq_prop_sto_calc_abridged_with_jump_eq1}
    \begin{aligned}
        &\varphi_0(\overline{x}_t)-\varphi_0(\overline{x}_0)-\int_{0}^t \dotprod{\nabla \varphi_0(\overline{x}_s),\zeta(\overline{x}_s)}\text{d}s \\
        &=  \int_{0}^t \Ecbig{\varphi_0\Big(\overline{x}_s + G\Big(\overline{x}_s,v,\xiest\Big)\Big) - \varphi_0(\overline{x}_s)}{v,\xiest}\text{d}s+ M_t^{(0)}.
    \end{aligned}
\end{equation}
Because $\varphi_1$ is not smooth, we cannot use the same result.
However, $z_t$ as the following specific form:
\[\text{d}z_t = \int_{\sphere \times \Xi^m} \Big(\prox{\gamma'_{t^-}\gest\Big(x_{t^-},\vest,\xiest\Big)}{z_{t^-}}-z_{t^-} \Big) \text{d}N\Big(t,\vest,\xiest\Big), \]
which means it consists only of jumps. One can directly deduce a derivation formula from the property of Poisson integrals. Precisely, we can write
\begin{eqnarray}
    \begin{aligned}\label{eq:calc_sto_nondiff_1}
         \varphi_1(z_t) -  \varphi_1(z_0) &= \sum_{k=0}^{+\infty}  \varphi_1(z_{t \wedge T_{k+1}}) - \varphi_1(z_{t \wedge T_{k}}) \\
        &=\underbrace{\sum_{k=0}^{+\infty} \bpar{ \varphi_1(z_{t \wedge T_{k+1}}) - \varphi_1(z_{t \wedge T_{k+1}^-})}}_{\text{(I)}} + \underbrace{\sum_{k=0}^{+\infty}\bpar{ \varphi_1(z_{t \wedge T_{k+1}^-}) - \varphi_1(z_{t \wedge T_{k}})}}_{\text{(II)}}. 
    \end{aligned}
\end{eqnarray}
This decomposition is valid almost surely for any $t\ge 0$ as $T_n \to +\infty$ almost surely.
 Because $(z_t)_{t \geq 0}$ is constant on the interval $[t \wedge T_{k},t \wedge T_{k+1})$, we obtain that (II)$=0$. Furthermore, we have
\begin{eqnarray*}
    \begin{aligned}
&\sum_{k=0}^{+\infty} \bpar{ \varphi_1(z_{t \wedge T_{k+1}}) - \varphi_1(z_{t \wedge T_{k+1}^-})}
=\sum_{k\ge0:\,T_{k+1} \le t}\bpar{ \varphi_1(z_{t \wedge T_{k+1}}) - \varphi_1(z_{t \wedge T_{k+1}^-})}\\
&=\sum_{k\ge0:\,T_{k+1}\le t} \Big[\varphi_1\Big(z_{T_{k+1}^-} + G_z\Big(\overline{x}_{T_{k+1}^-}, v_{k+1},\xi^{\{m\}}_{k+1}\Big)\Big)-\varphi_1(z_{T_{k+1}^-}) \Big],
\end{aligned}
\end{eqnarray*}
denoting $G_z$ as the component of $G$ relative to $z$, which we recall to be defined in \eqref{eq:proof_acc_zero_v_xi_eq1a}.
Because $\text{d}N(t,v,\xiest) = \sum_{k\ge 0} \delta_{\bpar{T_k,v_k,\xiestalg}}(\text{d}t,\text{d}v,\text{d}\xiest)$, we deduce
\begin{equation}\label{eq:calc_sto_nondiff_2}
   \begin{aligned}
       &\sum_{k=0}^{+\infty} \bpar{ \varphi_1(z_{t \wedge T_{k+1}}) - \varphi_1(z_{t \wedge T_{k+1}^-})} \\
       &=\int_0^t\int_{\sphere\times \Xi^m} \Big[\varphi_1\Big(z_{s-}+G_z\Big(\overline{x}_{s-},v,\xiest\Big)\Big)-\varphi_1(z_{s-})\Big]\,\text{d}N\Big(s,v,\xiest\Big).
   \end{aligned}
\end{equation}
Then, we have
\begin{equation}\label{eq_prop_sto_calc_abridged_with_jump_eq2}
    \varphi_1(z_t) = \varphi_1(z_0) + \int_{0}^t \Ecbig{\varphi_1\Big(z_s + G_z\Big(\overline{x}_s,v,\xiest\Big)\Big) - \varphi_1(z_s)}{v,\xiest}\text{d}s + M_t^{(1)},
\end{equation}
where
\begin{align*}
    M_t^{(1)} &:= \int_0^t\int_{\sphere\times \Xi^m} \Big[\varphi_1\Big(z_{s-}+G_z\Big(\overline{x}_{s-},v,\xiest\Big)\Big)-\varphi_1(z_{s-})\Big]\,\text{d}N\Big(s,v,\xiest\Big)\\
    &-\int_0^t\int_{\sphere\times \Xi^m} \Big[\varphi_1\Big(z_{s-}+G_z\Big(\overline{x}_{s-},v,\xiest\Big)\Big)-\varphi_1(z_{s-})\Big]\,\text{d}s \otimes \text{d}\mathcal{U}(\sphere) \otimes  \text{d}\mathcal{P}_{\xi}^{\otimes m}.
\end{align*}
is a martingale with expectation zero.
Consequently, by putting together the two contributions \eqref{eq_prop_sto_calc_abridged_with_jump_eq1} and \eqref{eq_prop_sto_calc_abridged_with_jump_eq2}, we obtain the desired result 
where $M_t= M_t^{(0)}+M_t^{(1)}$ is a martingale satisfying $\E{M_t} = 0$ for all $t \geq 0$.
\end{proof}

\subsection{Proof of Stopping Theorem \ref{thm:martingal_stopping}}\label{app:stopping_theorem}
The following proof is inspired from \cite[Theorem 6]{hermant2025continuized}.
Let $\tau$ be an almost surely finite time, \textit{i.e.} $ \tau(\omega)< + \infty$ almost surely.
The stopped martingale $\{ M_{t \wedge \tau} \}_{t \in \R_+}$ is a martingale, with $\E{M_{t \wedge \tau}} = \E{M_{0 \wedge \tau}}  = 0$ for all $t \in \R_+$.
    Then, the stopped process $V_t := \varphi_{\tau \wedge t}$ is such that
    \begin{equation*}
        \forall t\ge0,~ \E{V_{t}} \le \E{K_0} + \E{U_{t \wedge \tau}}.
    \end{equation*}
    As we assume $\inf_t \varphi_t \ge 0$, we also have $\inf_t  V_t \ge 0$. We thus apply Fatou's Lemma
    \begin{equation}\label{eq:stopping_thm_0}
        \E{  \liminf_{t \to +\infty} \:V_t}  \le \liminf_{t \to +\infty} \: \E{V_t} \le  \liminf_{t \to +\infty} \:   \{ \E{K_0} + \E{U_{t \wedge \tau}}\},
    \end{equation}
    and also
    \begin{equation}\label{eq:stopping_thm_1}
         \liminf_{t \to +\infty}  \: \{ \E{K_0} + \E{U_{t \wedge \tau}}\} \le \limsup_{t \to +\infty} \: \{ \E{K_0} + \E{U_{t \wedge \tau}}\} \le \E{K_0} + \E{\limsup_{t \to +\infty} \: U_{t\wedge \tau}}.
    \end{equation}
Combining \eqref{eq:stopping_thm_0} and \eqref{eq:stopping_thm_1} yields
\begin{equation}\label{eq:stopping_thm_2}
     \E{  \liminf_{t \to +\infty} \:V_t} \le \E{K_0} + \E{\limsup_{t \to +\infty} \: U_{t\wedge \tau}}.
\end{equation}
    $\tau$ being almost surely finite implies that $\lim_{t \to +\infty} t \wedge \tau(\omega) = \tau(\omega) $, almost surely. Then, for almost all $\omega \in \Omega$, we have
    \begin{equation}\label{eq:stopping_thm_2}
        \lim_{t \to +\infty} V_t(\omega) =  \lim_{t \to +\infty} \varphi_{t\wedge \tau(\omega)}(\omega) = \varphi_{\tau(\omega)}(\omega), 
    \end{equation}
    namely $V_t$ converges almost surely to $\varphi_{\tau}$. Similarly, we have that $U_{t \wedge \tau}$ converges almost surely to $U_\tau$. So, we can rewrite \eqref{eq:stopping_thm_2} as
    \[ \E{\varphi_\tau} \le \E{K_0} + \E{U_\tau}.\]


\bibliographystyle{siamplain}

\end{document}